\numberwithin{counter}{section}
\theoremstyle{plain}
\newaliascnt{theorem}{counter}
\newtheorem{theorem}[theorem]{Theorem}
\newaliascnt{proposition}{counter}
\newtheorem{proposition}[proposition]{Proposition}
\newaliascnt{result}{counter}
\newaliascnt{lemma}{counter}
\newtheorem{lemma}[lemma]{Lemma}
\newaliascnt{corollary}{counter}
\newaliascnt{exercise}{counter}
\newaliascnt{classexercise}{counter}
\theoremstyle{definition}
\newaliascnt{definition}{counter}
\newaliascnt{remark}{counter}
\newtheorem{remark}[remark]{Remark}
\newaliascnt{example}{counter}
\newtheorem{example}[example]{Example}
\renewcommand\*{\cdot}
\newcommand\R{\mathbb{R}}
\newcommand\C{\mathbb{C}}
\newcommand\N{\mathbb{N}}
\newcommand{\trace}{\operatorname{tr}}     
\newcommand\Id{\operatorname{Id}}                               
\newcommand\dom{{D}}                                      
\providecommand{\norm}[2]{\left\lVert#1\right\rVert_{#2}}
\providecommand{\abs}[1]{\left\lvert#1\right\rvert}
\newcommand{\smashedtilde}[1]{\vphantom{#1}\smash{\tilde{#1}}}
\newcommand{\mednorm}[2]{\big\lVert#1\big\rVert_{#2}}
\newcommand{\bignorm}[2]{\bigg\lVert#1\bigg\rVert_{#2}}
\newcommand{\ip}[3]{\left\langle #1, #2\right\rangle_{#3}}
\renewcommand{\Re}{\operatorname{Re}}
\renewcommand{\Im}{\operatorname{Im}}
\newcommand{\borel}[1]{\mathcal B\left(#1\right)}
\newcommand{\Lip}{\operatorname{Lip}}
\newcommand{\RR}{\mathbb{R}}
\newcommand{\PP}{\mathbb{P}}
\newcommand{\NN}{\mathbb{N}}
\newcommand{\EE}{\mathbb{E}}
\newcommand{\UU}{\mathbb{U}}
\newcommand{\cB}{\mathcal{B}}
\newcommand{\cL}{\mathcal{L}}
\newcommand\ddt{\tfrac{\partial}{\partial t}}
\newcommand\ddx{\tfrac{\partial}{\partial x}}
\newcommand\ddxx{\tfrac{\partial^2}{\partial x^2}}
\newcommand\F{\mathcal{F}}
\renewcommand\d{\,\operatorname{d}\hspace{-0.05cm}}
\renewcommand{\P}[1]{\mathbb{P}\left[#1\right]}
\newcommand{\E}[1]{\mathbb{E}\left[#1\right]}
\newcommand\HS{L_2}
\begin{document}
\title{Weak convergence rates for stochastic evolution equations and applications to nonlinear stochastic wave, HJMM, stochastic Schr\"odinger and linearized stochastic Korteweg--de Vries equations\thanks{We are grateful to Arnulf Jentzen for pointing us to the topic and sharing with us ideas from unpublished joint work with Sonja Cox \cite{cox2017weak}. M.\,S.\,M.~thankfully acknowledges support by the Swiss National Science Foundation through grant SNF $205121\_163425$. P.\,H.~thankfully acknowledges support by the Freiburg Institute of Advanced Studies in the form of a Junior Fellowship.}}

\author{Philipp Harms \and Marvin S.~M\"uller}

\AtEndDocument{
  \bigskip{
    \footnotesize
    
    \textsc{P.~Harms, Freiburg Institute of Advanced Studies and Department of Mathematical Stochastics, University of Freiburg, Germany}\par\nopagebreak
    \textit{E-mail address:} \texttt{philipp.harms@stochastik.uni-freiburg.de}
    
    \medskip
    
    \textsc{M.~S.~M\"uller, Department of Mathematics, ETH Zurich,
      Switzerland}\par\nopagebreak
    \textit{E-mail address:} \texttt{marvin.mueller@math.ethz.ch}
  }
}

\maketitle

\begin{abstract}
  We establish weak convergence rates for noise discretizations of a wide class of stochastic evolution equations with non-regularizing semigroups and additive or multiplicative noise. 
  This class covers the nonlinear stochastic wave, HJMM, stochastic Schr\"odinger and linearized stochastic Korteweg--de Vries equation. 
  For several important equations, including the stochastic wave equation, previous methods give only suboptimal rates, whereas our rates are essentially sharp.
\end{abstract}

\tableofcontents

\section{Introduction}
\label{sec:introduction}

This paper establishes weak convergence rates for noise discretizations of a wide class of stochastic evolution equations with non-regularizing semigroups and regular non-linearities. 
We confirm that the weak convergence rate equals twice the strong convergence rate and is characterized in terms of two components: 
\begin{enumerate}[noitemsep,nolistsep,label=\alph*)]
\item the decay of the covariance of the noise, and
\item regularity of the solution, encoded in the choice of an invariant subspace.
\end{enumerate} 
In the case of additive noise the upper bound on the weak error is sharp. 

Our result is motivated by the study of stochastic partial differential equations driven by infinite dimensional noise processes, which came up in a large variety of applications. Numerical simulations of these equations require a full discretization in space, time, and noise. Our result complements the results on spatial and temporal discretizations in \cite{cox2017weak,jacobe2015weak,jacobe2018lower} and thereby completes the weak error analysis of numerical discretizations of the above-mentioned class of equations, providing a full picture of their complexity (see~\autoref{tab:rates}). Weak (as opposed to strong) convergence rates offer a flexible way of measuring the quality of the approximation, as the class of test functions can be chosen to reflect the priorities at application level. 

Our results are general and apply to a variety of equations, as we demonstrate in several examples. 
For the non-linear stochastic wave equation with additive or multiplicative space-time white noise they give the essentially sharp rate $1-\epsilon$, $\epsilon >0$. 
Further examples are the Heath--Jarrow--Morton--Musiela (HJMM) equation, the stochastic Schr\"odinger equation, and the linearized stochastic Korteweg--de Vries equation.

The proof of our main result works as follows. 
First, we regularize the equation using Yosida approximations of the semigroup, as this allows us to work with the Kolmogorov equation and the strong It\^o formula.
Second, we express the weak discretization error of the regularized equation in terms of the solution of the Kolmogorov equation. 
Third, we introduce an additional subspace, which links the regularity of the solution to the quality of the noise approximation and determines the error rate. 
This last step is essential for obtaining optimal rates in many examples, including the stochastic wave equation, and constitutes an important theoretical contribution of this paper. 

In the previous literature weak convergence rates have been studied intensively for equations of the above type with regularizing semigroups; see e.g.\@ \cite{kruse2014strong} and references therein. 
However, in the case of non-regularizing semigroups there remain many open questions. 
While temporal and spatial discretizations have been studied in \cite{hausenblas2010weak, kovacs2012weak, kovacs2013weak, kovacs2015weak}
for additive noise and in
\cite{cox2017weak,jacobe2015weak,jacobe2018lower,debouard2006weak, doersek2010semigroup, doersek2013efficient, hefter2016weak, krivko2013numerical, wang2015exponential} 
for multiplicative noise, this is the first result on the discretization of multiplicative noise in this setting. 
Moreover, our framework is general and encompasses a variety of equations from mathematical finance and physics. 

\begin{table}[h]
\centering
\begin{tabular}{cccc}\toprule
& \multicolumn{3}{c}{discretization}
\\\cmidrule(r){2-4}
noise coefficient  & time & space & noise 
\\\midrule
constant & 1 & 1 & 1
\\
affine  & 1 & 1 & \phantom{${}^*$}1${}^*$
\\
non-affine & (?) & (?)& (?)
\\\bottomrule
\end{tabular}
\caption{Weak convergence rates of exponential Euler and Galerkin discretizations of the stochastic wave equation with space time white noise. The rates are to be understood as $1-\epsilon$ for arbitrary $\epsilon>0$. References are given in \autoref{sec:introduction}. The starred rate is a result of this paper (see \autoref{prop:wave}).}
\label{tab:rates}
\end{table}

\subsection{Notation}
\label{sec:notation}

$\N$ denotes the natural numbers without zero.
Function spaces are denoted as follows: 
$B$ denotes bounded functions with the supremum norm, 
$C$ denotes continuous functions,
$C_b$ denotes continuous bounded functions with the supremum norm,
$C^k$ denotes continuous functions which are $k$-times Fr\'echet differentiable on the interior of the domain and whose derivatives up to order $k$ extend to continuous functions on the domain,
$C^k_b$ denotes the subset of $C^k$ whose derivatives of orders $1$ to $k$ belong to $C_b$ with norm $\|f\|_{C^k_b}=\|f(0)\|+\|f'\|_{C_b}+\dots+\|f^{(k)}\|_{C_b}$, 
$L$ denotes linear operators with the operator norm,
$\Lip$ denotes Lipschitz functions with norm $\norm{f}{\Lip} = \|f(0)\|+\sup_{x\neq y}\|f(x)-f(y)\|/\|x-y\|$, 
$\cL^p$ denotes strongly measurable $p$-integrable functions,
$L^p$ denotes the corresponding equivalence classes modulo equality almost surely,
$L^{(2)}$ denotes bilinear operators,
$L_2$ denotes Hilbert--Schmidt operators,
$W^{\alpha,p}$ denotes the Sobolev--Slobodeckij space with smoothness parameter $\alpha$ and integrability parameter $p$, 
$H^\alpha=W^{\alpha,2}$ denotes the Bessel potential space, 
and $H^\alpha_0$ denotes the closure of the compactly supported smooth functions in $H^\alpha$.
For any Hilbert space $H$,
$\cB(H)$ denotes the Borel $\sigma$-algebra on $H$, 
$[X]_{\PP,\borel{H}}\in L^0(\Omega;H)$ denotes the $\PP$-equivalence class of $X\in \cL^0(\Omega;H)$,
and $\sigma_P(A)$ denotes the point spectrum of a linear operator $A\colon D(A) \subseteq H \to H$.
Note that we do not require functions in $C^k_b$ to be bounded. 

\subsection{Main result}\label{sec:result}

The following theorem establishes weak and strong convergence rates for noise discretizations of a certain class of stochastic evolution equations. 
Roughly speaking, the assumptions of the theorem guarantee that the equation is well-posed on a Hilbert space $H$ and a continuously embedded subspace $V$ of $H$, and that Kolmogorov's backward equation for $H$-valued solutions is well-posed.
This is used to bound the weak and strong discretization errors in terms of the Hilbert--Schmidt norm of the difference between the actual and the discretized volatility.
The role of the subspace $V$ is discussed in Section~\ref{sec:regularity} below.
An extended version of the theorem with explicit bounds is presented in Proposition~\ref{prop:weak_error_strong}.
Note that the function $\phi \in C^2_b(H;\R)$ in the theorem may be unbounded according to our definition.

\begin{theorem}\label{thm:weak_errorG}
Let $T\in (0,\infty)$,  
let $(H,\ip{\cdot}{\cdot}{H})$, $(U, \ip{\cdot}{\cdot}{U})$, and $(V,\ip{\cdot}{\cdot}{V})$ be
separable $\RR$-Hilbert spaces with $V\subset H$ densely and
continuously, let $(\Omega, \mathcal F,
(\mathcal F_t)_{t \in [0,T]},\mathbb P)$ be a stochastic basis,
let $(W_t)_{t\in [0,T]}$ be an
$\Id_U$-cylindrical $(\F_t)$-Wiener process, let
$S\colon[0,\infty)\to L(H)$ be a strongly continuous semigroup, which
restricts to a strongly continuous semigroup
5
 $S|_V\colon[0,\infty)\to L(V)$,
let $F\in C^2_b(H)$ and $B\in C^2_b(H; \HS(U; H))$ be such that $F(V)\subseteq V$, $B(V)\subseteq \HS(U;V)$, and the mappings $V\ni x\mapsto F(x)\in V$ and $V\ni x \mapsto B(x) \in \HS(U;V)$ are Lipschitz continuous,    
let $\xi \in \cL^2(\Omega;V)$ be $\F_0\slash \borel{V}$-measurable,
let $(e_k)_{k\in\N}$ be an orthonormal basis of $U$, 
for each $n\in\N\cup\{\infty\}$ let $P_n \in L(U)$ be the orthogonal projection onto the closure of the linear span of $\{e_k\colon k\in \N\cap [0,n)\}$, 
and let $X^n\colon [0,T]\times \Omega \to H$ be a predictable process which satisfies that $\P{\int_0^T \norm{X^n_t}{H}^2\d t<\infty} = 1$ and for all $t \in [0,T]$,
\begin{equation*}
  [X^n_t]_{\PP,\borel{H}} = \left[S_t\xi + \int_0^t S_{t-s} F(X^n_s) \d s\right]_{\PP,\borel{H}} + \int_0^t S_{t-s} B(X^n_s) P_n \d W_s.
\end{equation*}
Then there exists $C \in (0,\infty)$ such that for each $n \in \mathbb N$ and $\phi \in C^2_b(H;\R)\setminus\{0\}$, 
\begin{equation}
\label{eq:7}
\norm{X_T^\infty- X_T^n}{L^2(\Omega;H)}^2  +
\frac{\abs{ \E{\phi\left(X^\infty_T\right)} -
\E{\phi\left(X^n_T\right)}}}{\norm{\phi}{C^2_b(H;\R)}}
\leq C \sup_{x \in V} \frac{\sum_{k=n}^\infty \norm{B(x)e_k}{H}^2 }{1+\norm{x}{V}^2}.
\end{equation}
\end{theorem}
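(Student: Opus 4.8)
The theorem will follow from the more quantitative statement of \autoref{prop:weak_error_strong}, so I describe how to establish the two bounds on the left-hand side of \eqref{eq:7} directly. The proof rests on four ingredients: (i) Yosida regularization of the semigroup, which replaces the mild solutions by genuine It\^o processes; (ii) a Gronwall estimate for the strong error; (iii) an identity expressing the weak error through the solution of Kolmogorov's backward equation, applied via It\^o's formula; and (iv) a-priori bounds---on the second moment of the solution in $V$ and on the second derivative of the Kolmogorov solution---which are \emph{uniform} in $n$ and in the regularization parameter. For $\lambda$ large let $A_\lambda=\lambda^2R(\lambda,A)-\lambda$ be the Yosida approximation of the generator $A$ of $S$; since $R(\lambda,A)=\int_0^\infty e^{-\lambda t}S_t\d t$ restricts to $V$, so do $A_\lambda$ and the uniformly continuous semigroup $S^\lambda$ it generates, and for $n\in\N\cup\{\infty\}$ let $X^{n,\lambda}$ be the strong solution of the equation with generator $A_\lambda$ and noise coefficient $B(X^{n,\lambda})P_n$, started at $\xi$. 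Because $A_\lambda$ is bounded and $\xi\in\cL^2(\Omega;V)$, $F(V)\subseteq V$, $B(V)\subseteq\HS(U;V)$ with the stated Lipschitz properties, $X^{n,\lambda}$ is $V$-valued, and the standard moment estimate gives $\sup_{t\le T}\E{\norm{X^{n,\lambda}_t}{V}^2}\le C(1+\E{\norm{\xi}{V}^2})$ with $C$ independent of $n$ and $\lambda$---here one uses $\norm{B(x)P_n}{\HS(U;V)}\le\norm{B(x)}{\HS(U;V)}$ and the uniform bound $\sup_{\lambda,t\le T}\norm{S^\lambda_t}{L(V)}<\infty$.

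For the strong error, $X^{\infty,\lambda}-X^{n,\lambda}$ solves a mild equation whose inhomogeneity consists of the drift difference $S^\lambda_{t-s}(F(X^{\infty,\lambda}_s)-F(X^{n,\lambda}_s))$, the diffusion difference $S^\lambda_{t-s}(B(X^{\infty,\lambda}_s)-B(X^{n,\lambda}_s))$, and the genuinely new term $S^\lambda_{t-s}B(X^{n,\lambda}_s)(\Id-P_n)$. Taking $L^2(\Omega;H)$-norms, using the It\^o isometry together with the Lipschitz continuity of $F$ and $B$ on $H$, and applying Gronwall's inequality, one bounds $\norm{X^{\infty,\lambda}_T-X^{n,\lambda}_T}{L^2(\Omega;H)}^2$ by a constant times $\int_0^T\E{\sum_{k=n}^\infty\norm{B(X^{n,\lambda}_s)e_k}{H}^2}\d s$. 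Since $X^{n,\lambda}_s\in V$, the elementary inequality $\sum_{k\ge n}\norm{B(x)e_k}{H}^2\le\bigl(\sup_{y\in V}\tfrac{\sum_{k\ge n}\norm{B(y)e_k}{H}^2}{1+\norm{y}{V}^2}\bigr)(1+\norm{x}{V}^2)$ and the uniform $V$-moment bound turn this into the strong part of \eqref{eq:7} for $X^{n,\lambda}$, with constant independent of $n$ and $\lambda$.

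For the weak error, put $u^\lambda(t,x)=\E{\phi(Z^{\lambda,x}_{T-t})}$, where $Z^{\lambda,x}$ solves the $A_\lambda$-equation with full noise started from $x$. Since $A_\lambda$ is bounded and $F,B\in C^2_b$, $\phi\in C^2_b$, the function $u^\lambda$ is $C^{1,2}$ on $[0,T]\times H$, $u^\lambda(T,\cdot)=\phi$, and it solves $\partial_tu^\lambda+\ip{A_\lambda x+F(x)}{Du^\lambda}{H}+\tfrac12\trace\bigl(B(x)B(x)^*D^2u^\lambda\bigr)=0$. Applying It\^o's formula to $s\mapsto u^\lambda(s,X^{n,\lambda}_s)$, taking expectations (the stochastic term is a true martingale since $Du^\lambda$ is bounded and $B$ has at most linear growth), and using $u^\lambda(T,\cdot)=\phi$ together with $\E{u^\lambda(0,\xi)}=\E{\phi(X^{\infty,\lambda}_T)}$ (the flow property and $\F_0$-measurability of $\xi$), all drift and full-noise contributions cancel against the Kolmogorov equation and, using $(B(x)P_n)(B(x)P_n)^*-B(x)B(x)^*=-B(x)(\Id-P_n)B(x)^*=-\sum_{k\ge n}B(x)e_k\otimes B(x)e_k$, one is left with
\[
\E{\phi(X^{n,\lambda}_T)}-\E{\phi(X^{\infty,\lambda}_T)}=-\tfrac12\,\E{\int_0^T\sum_{k=n}^\infty D^2u^\lambda\bigl(s,X^{n,\lambda}_s\bigr)\bigl(B(X^{n,\lambda}_s)e_k,B(X^{n,\lambda}_s)e_k\bigr)\d s}.
\]
Estimating $\abs{D^2u^\lambda(s,x)(h,h)}\le\norm{D^2u^\lambda(s,\cdot)}{C_b}\norm{h}{H}^2$ and applying once more the elementary inequality and the uniform $V$-moment bound, the weak error is at most $C\,\sup_{s\le T}\norm{D^2u^\lambda(s,\cdot)}{C_b}\,\sup_{y\in V}\tfrac{\sum_{k\ge n}\norm{B(y)e_k}{H}^2}{1+\norm{y}{V}^2}$.

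The \emph{main obstacle} is to show that $\norm{D^2u^\lambda(s,\cdot)}{C_b}\le C\norm{\phi}{C^2_b}$ with $C$ independent of $\lambda$ and of $s\le T$, and then to pass to the limit $\lambda\to\infty$. The derivative bound is obtained by differentiating the flow $x\mapsto Z^{\lambda,x}$ once and twice, deriving the corresponding variational equations, and applying the It\^o isometry and Gronwall; the key point---which is precisely where care with the semigroup approximation is needed---is that the resulting constants depend only on the $C^2_b$-norms of $F$ and $B$ and on $\sup_{\lambda,t\le T}\norm{S^\lambda_t}{L(H)}<\infty$, hence are uniform in $\lambda$, while $\phi$ enters only through $D\phi$ and $D^2\phi$, giving linear dependence on $\norm{\phi}{C^2_b}$. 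Finally, the Yosida approximation theorem gives $S^\lambda_t\to S_t$ strongly, uniformly on $[0,T]$, and a further Gronwall argument exploiting the $H$-Lipschitz continuity of $F$ and $B$ shows $\sup_{t\le T}\E{\norm{X^{n,\lambda}_t-X^n_t}{H}^2}\to0$ as $\lambda\to\infty$ for every $n\in\N\cup\{\infty\}$; since $\phi$ is Lipschitz (as $\phi\in C^2_b$), this transfers both the strong and the weak estimate to the limit and yields \eqref{eq:7} with a constant $C$ independent of $n$ and $\phi$.
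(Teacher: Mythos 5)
Your proposal is correct and follows essentially the same route as the paper: Yosida regularization to pass to strong solutions, a Gronwall argument for the strong error, the Kolmogorov backward equation combined with It\^o's formula for the weak error, uniform (in $n$ and $\lambda$) a-priori bounds on the second moment of the solution in $V$ and on $D^2u^\lambda$, and a final limit passage $\lambda\to\infty$. The only differences are cosmetic, e.g.\ you attach the $(\Id-P_n)$-term to $B(X^{n,\lambda})$ rather than to $B(X^{\infty})$ in the strong-error decomposition, and you run that Gronwall argument at the level of the regularized equation, whereas the paper does it directly for the mild solutions.
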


\begin{proof}
It may be assumed without loss of generality that $S\colon[0,\infty)\to L(H)$ is uniformly bounded by adding and subtracting a multiple of the identity to the generator of the semigroup and the nonlinear part of the drift, respectively. 
Then the assumptions of Section~\ref{sec:setting} hold. 
Thus, the bound of the weak error follows from Proposition~\ref{prop:weak_error_strong}, noting that it holds for each $x \in V$ that
\begin{equation*}
\sum_{k=0}^\infty\norm{B(x)(P_\infty+P_n)e_k}{H}\norm{B(x)(P_\infty-P_n)e_k}{H}
=
2\sum_{k=n}^\infty\norm{B(x)e_k}{H}^2.
\end{equation*}
It remains to bound the strong error. For each $t \in [0,T]$, it holds that
\begin{align*}
\norm{X^\infty_t-X^n_t}{L^2(\Omega;H)}
&\leq 
\int_0^t \norm{S_{t-s}(F(X^\infty_s)-F(X^n_s))}{L^2(\Omega;H)} \d s
\\&\qquad
+\sqrt{ \int_0^t \norm{S_{t-s}(B(X^\infty_s)-B(X^n_s)P_n)}{L^2(\Omega;\HS(U;H))}^2 \d s}. 
\end{align*}
{\allowdisplaybreaks
Taking the square yields
\begin{align*}
\hspace{2em}&\hspace{-2em}
\norm{X^\infty_t-X^n_t}{L^2(\Omega;H)}^2
\leq 
\norm{S}{B([0,T];L(H))}^2 \bigg( 2T\int_0^t \norm{F(X^\infty_s)-F(X^n_s)}{L^2(\Omega;H)}^2 \d s
\\&\qquad
+4\int_0^t \norm{B(X^\infty_s)-B(X^\infty_s)P_n}{L^2(\Omega;\HS(U;H))}^2 \d s 
\\&\qquad
+4\int_0^t \norm{B(X^\infty_s)P_n-B(X^n_s)P_n}{L^2(\Omega;\HS(U;H))}^2 \d s \bigg)
\\&\leq
\norm{S}{B([0,T];L(H))}^2 \Big(2T\norm{F}{\Lip(H;H)}^2+4\norm{B}{\Lip(H;\HS(U;H))}^2\Big)\int_0^t \norm{X^\infty_s-X^n_s}{L^2(\Omega;H)}^2 \d s
\\&\qquad
+4\norm{S}{B([0,T];L(H))}^2 T\left(1+\norm{X^\infty_s}{B([0,T];L^2(\Omega;V))}^2\right)\sup_{x\in V} \frac{\sum_{k=n}^\infty \norm{B(x)e_k}{H}^2 }{1+\norm{x}{V}^2}. 
\end{align*}
}
Both sides of this inequality are finite by Lemma~\ref{lem:apriori}.\ref{item:apriori2} with $H$ replaced by $V$, and the strong rate follows from Gronwall's lemma.
\end{proof}

\subsection{Convergence rate and regularity of the solution}\label{sec:regularity}

The space $V$ in Theorem~\ref{thm:weak_errorG} can be used to encode regularity properties of the solution which go beyond those present in $H$. Choosing $V$ strictly smaller than $H$ allows one to extract a stronger convergence rate from Theorem~\ref{thm:weak_errorG} in some cases, as the following example demonstrates. 

\begin{example}
Let $H = U = L^2((0,1))$, 
for each $x\in H$ and $u\in U$ let $B(x)u$ be the constant function $B(x)u=(s\mapsto \int_0^1 x(r) u(r) \d r)$,
let $\Delta \colon H^2((0,1))\cap H^1_0((0,1)) \to L^2((0,1))$ be the Dirichlet Laplacian on $H$, 
let $\delta \in (0,1/4)$, 
let $V$ be the domain of $(-\Delta)^\delta$ with $\norm{\cdot}{V}=\norm{(-\Delta)^\delta(\cdot)}{H}$,
and for each $k \in \mathbb N$ let $e_k = (\sqrt{2}\sin( k \pi s))_{s \in (0,1)} \in H$.
Then $B \in L(H;\HS(U;H))$, $B|_V \in L(V;\HS(U;V))$, 
and it holds for each $n \in \mathbb N$ that
\begin{equation*}
\sup_{x \in H} \frac{\sum_{k=n}^\infty \norm{B(x)e_k}{H}^2 }{1+\norm{x}{H}^2 } = 1, 
\qquad
\sup_{x \in V} \frac{\sum_{k=n}^\infty \norm{B(x)e_k}{H}^2 }{1+\norm{x}{V}^2 } \leq \pi^{-4\delta} n^{-4\delta}.
\end{equation*}
To see this, note that for each $x \in H$
\begin{align*}
\sum_{k\in\N} \norm{B(x)e_k}{H}^2
&=
\sum_{k\in\N} \abs{\langle x,e_k\rangle_H}^2 \norm{1}{H}^2
=
\norm{x}{H}^2, 
\\
\sum_{k\in\N} \norm{B(x)e_k}{V}^2
&=
\sum_{k\in\N} \abs{\langle x,e_k\rangle_H}^2 \norm{1}{V}^2
=
\norm{x}{H}^2\norm{1}{V}^2,
\end{align*}
and for each $n\in\mathbb N$,
\begin{align*}
  1\geq \sup_{x \in H} \frac{\sum_{k=n}^\infty \norm{B(x)e_k}{H}^2 }{1+\norm{x}{H}^2 }
  &\geq \sup_{\lambda>0}
    \frac{\sum_{k=n}^\infty \norm{B(\lambda e_n)e_k}{H}^2 }{1+\norm{\lambda e_n}{H}^2 } 
    = \sup_{\lambda>0} \frac{\lambda^2}{1+ \lambda^2} = 1, 
  \\
  \sup_{x \in V} \frac{\sum_{k=n}^\infty \norm{B(x)e_k}{H}^2 }{1+\norm{x}{V}^2 } 
  &\leq
    \sup_{x \in V} \frac{\sum_{k=n}^\infty \pi^{-4\delta} k^{-4\delta} \abs{\ip{x}{(-\Delta)^\delta e_k}{H}}^2 }{1+\norm{x}{V}^2 }
  \\&\leq
  \pi^{-4\delta}n^{-4\delta} \sup_{x \in V} \frac{\norm{x}{V}^2 }{1+\norm{x}{V}^2 }
  = \pi^{-4\delta}n^{-4\delta} .
\end{align*}
Thus, in this example Theorem~\ref{thm:weak_errorG} with $H=V=L^2((0,1))$ does not establish convergence. Similiarly, it can be shown that setting $H=V$ equal to the domain of $(-\Delta)^\delta$ does not establish convergence either. However, one obtains a positive rate of convergence by choosing $V$ strictly smaller than $H$. 
\end{example}

\subsection{Sharpness of the rate in the additive noise case}\label{sec:sharpness}

The weak and strong rates provided by Theorem~\ref{thm:weak_errorG} are sharp in the additive noise case when there is no drift, as is shown next. Some related results on spatial discretizations can be found in~\cite{jacobe2018lower}. 

\begin{proposition}\label{prop:additive}
Let $(H,\ip{\cdot}{\cdot}{H})$ and $(U, \ip{\cdot}{\cdot}{U})$ be
separable $\RR$-Hilbert spaces, 
let $(\Omega, \mathcal F,
(\mathcal F_t)_{t \in [0,1]},\mathbb P)$ be a stochastic basis,
let $(W_t)_{t\in [0,1]}$ be an $\Id_U$-cylindrical $(\F_t)$-Wiener process, 
let $B \in \HS(U;H)$ be injective, 
let $(e_k)_{k\in\N}$ be an orthonormal basis of eigenvectors of $B^*B$, 
for each $n\in\N\cup\{\infty\}$ let $P_n \in L(U)$ be the orthogonal projection onto the closure of the linear span of $\{e_k\colon k\in \N\cap [0,n)\}$, 
let $X^n$ be a predictable process which satisfies for all $t \in [0,1]$ that
$[X^n_t]_{\PP,\borel{H}} = \int_0^t B P_n \d W_s$,
and let $\phi=\exp(-\norm{\cdot}{H}^2/2) \in C^2_b(H)$. 
Then
\begin{equation*}
\lim_{n\to\infty} \frac{\norm{X^n_1-X^\infty_1}{L^2(\Omega;H)}^2}{\sum_{k=n}^\infty \norm{Be_k}{H}^2}=1, 
\qquad
\lim_{n\to\infty} \frac{\E{\phi\left(X^n_1\right)} -
\E{\phi\left(X^\infty_1\right)}}{ \sum_{k=n}^\infty \norm{Be_k}{H}^2 } = \frac{\E{\phi(X^\infty_1)}}{2} \in (0,\infty). 
\end{equation*}
\end{proposition}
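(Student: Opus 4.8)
The plan is to compute both quantities explicitly using the Gaussian structure of the processes. Since $X^n_1 = \int_0^1 BP_n\d W_s$, the law of $X^n_1$ is the centered Gaussian on $H$ with covariance operator $Q_n := BP_nP_n^*B^* = BP_nB^*$ (using that $P_n$ is an orthogonal projection and the Itô isometry, the covariance of $\int_0^1 BP_n\d W_s$ is $\int_0^1 (BP_n)(BP_n)^*\d s = BP_nB^*$). Similarly $X^\infty_1$ is centered Gaussian with covariance $Q_\infty := BB^*$. Because the $e_k$ are eigenvectors of $B^*B$, say $B^*Be_k = \lambda_k e_k$ with $\lambda_k \geq 0$, the vectors $Be_k$ are orthogonal in $H$ with $\norm{Be_k}{H}^2 = \langle B^*Be_k, e_k\rangle_U = \lambda_k$, and $\sum_k \lambda_k = \norm{B}{\HS(U;H)}^2 < \infty$; injectivity of $B$ gives $\lambda_k > 0$ for all $k$. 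Thus both covariances are simultaneously diagonalized in the orthonormal system $f_k := Be_k/\sqrt{\lambda_k}$ of $H$, with $Q_\infty f_k = \lambda_k f_k$ for all $k$ and $Q_n f_k = \lambda_k f_k$ for $k < n$, $Q_n f_k = 0$ for $k \geq n$.

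\emph{Strong error.} Since $X^n_1 - X^\infty_1 = -\int_0^1 B(P_\infty - P_n)\d W_s$, the Itô isometry gives $\norm{X^n_1 - X^\infty_1}{L^2(\Omega;H)}^2 = \norm{B(P_\infty-P_n)}{\HS(U;H)}^2 = \sum_{k=n}^\infty \norm{Be_k}{H}^2$. So the first limit is in fact identically $1$ for every $n$ (the ratio equals $1$), which is even stronger than the claimed limit.

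\emph{Weak error.} The key computation is the Gaussian integral $\E{\phi(X^n_1)} = \E{\exp(-\norm{X^n_1}{H}^2/2)}$. Writing $X^n_1 = \sum_{k<n} \sqrt{\lambda_k}\, \gamma_k f_k$ with i.i.d.\ standard normals $\gamma_k$, one gets $\E{\exp(-\tfrac12\sum_{k<n}\lambda_k\gamma_k^2)} = \prod_{k<n}(1+\lambda_k)^{-1/2}$, and likewise $\E{\phi(X^\infty_1)} = \prod_{k=1}^\infty(1+\lambda_k)^{-1/2}$, which lies in $(0,\infty)$ since $\sum_k\lambda_k<\infty$. Hence
\begin{equation*}
\E{\phi(X^n_1)} - \E{\phi(X^\infty_1)} = \E{\phi(X^\infty_1)}\left(\prod_{k=n}^\infty(1+\lambda_k)^{1/2} - 1\right).
\end{equation*}
Now I would divide by $\sum_{k=n}^\infty\lambda_k =: r_n \to 0$ and use that $\prod_{k=n}^\infty(1+\lambda_k)^{1/2} - 1 = \tfrac12 r_n + o(r_n)$ as $n\to\infty$: indeed $\log\prod_{k=n}^\infty(1+\lambda_k)^{1/2} = \tfrac12\sum_{k=n}^\infty\log(1+\lambda_k)$, and since $0 \leq x - \log(1+x) \leq x^2/2$ and $\sup_{k\geq n}\lambda_k \to 0$, this equals $\tfrac12 r_n(1+o(1))$; then $e^{\tfrac12 r_n(1+o(1))} - 1 = \tfrac12 r_n(1+o(1))$. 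This yields $\lim_{n\to\infty}\big(\E{\phi(X^n_1)} - \E{\phi(X^\infty_1)}\big)/r_n = \E{\phi(X^\infty_1)}/2$, as claimed.

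\emph{Main obstacle.} There is no serious obstacle; the only points requiring a little care are (i) justifying the diagonalization — that injectivity of $B$ plus $e_k$ being eigenvectors of $B^*B$ genuinely makes both $Q_n$ and $Q_\infty$ diagonal in the same orthonormal basis $(f_k)$ of (the closure of the range of $B$ in) $H$, and handling the orthogonal complement of the range on which both covariances vanish; and (ii) the asymptotic expansion of the infinite product, for which the bound $|x - \log(1+x)| \leq x^2$ for $x \in [0,1]$ together with $\sum_{k\geq n}\lambda_k^2 \leq (\sup_{k\geq n}\lambda_k)\, r_n = o(r_n)$ suffices. Everything else is a direct consequence of the Itô isometry and the standard Gaussian Laplace-transform identity $\E{e^{-t\gamma^2/2}} = (1+t)^{-1/2}$.
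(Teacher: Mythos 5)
Your proposal is correct and follows essentially the same route as the paper: both reduce the weak error to the exact Gaussian identity $\E{\phi(X^n_1)}=\prod_{k<n}(1+\norm{Be_k}{H}^2)^{-1/2}$ and then extract the limit from the first--order expansion of $\log(1+x)$, while the strong error is the same one-line It\^o-isometry computation (the ratio is identically $1$). The only cosmetic differences are that you obtain the product formula by diagonalizing the covariance in the orthonormal system $Be_k/\norm{Be_k}{H}$ and computing $\E{e^{-t\gamma^2/2}}=(1+t)^{-1/2}$ directly, where the paper instead cites the trace--log formula from Da Prato--Zabczyk together with a singular value decomposition, and that you phrase the final limit via an $o(r_n)$ expansion rather than the paper's matching $\liminf$/$\limsup$ bounds.
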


\begin{proof}
By It\^o's isometry it holds for each $n \in \mathbb N \cup \{\infty\}$ that 
\begin{equation*}
\norm{X^n_1-X^\infty_1}{L^2(\Omega;H)}^2
=
\norm{B(P_n-P_\infty)}{\HS(U;H)}^2
=
\sum_{k=n}^\infty \norm{Be_k}{H}^2. 
\end{equation*}
This proves the strong convergence rate, and it remains to show the weak convergence rate. For each $n \in \N \cup \{\infty\}$ the random variable $X^n_1$ is Gaussian with covariance $BP_nP_n^*B^* \in L_1(H)$ by \cite[Theorem~5.2]{daPrato2014se}. 
Setting $\lambda_k = \norm{Be_k}{H}$, 
one has for each $n \in \N\cup \{\infty\}$ by \cite[Proposition~2.17]{daPrato2014se} and a singular value decomposition of the operator $BP_n \in \HS(U;H)$ that
\begin{align*}
\E{\phi(X^n_1)} 
&=
\exp\left(-\frac12 \trace \big(\log(1+BP_nP_n^*B^*)\big)\right)
\\&=
\exp\left(-\frac12 \trace \big(\log(1+P_n^*B^*BP_n)\big)\right) =
\exp\bigg(-\frac12 \sum_{k=0}^{n-1} \log(1+\lambda_k^2) \bigg). 
\end{align*}
The basic inequalities
\begin{gather*}
\forall x,y\in \R: x\leq y \Rightarrow \exp(y)-\exp(x)\geq \exp(x)(y-x), 
\\
\forall \epsilon>0 \exists \delta>0 \forall x\in [0,\delta]: \log(1+x) \geq (1-\epsilon) x,
\end{gather*}
imply that 
\begin{align*}
\hspace{2em}&\hspace{-2em}
\liminf_{n\to\infty} \frac{\E{\phi\left(X^n_1\right)} -
\E{\phi\left(X^\infty_1\right)}}{ \sum_{k=n}^\infty \lambda_k^2 }
\\&\geq
\liminf_{n\to\infty}  \frac{\exp\left(-\frac12\sum_{k=0}^\infty \log\left(1+\lambda_k^2\right)\right) \frac12\sum_{k=n}^\infty \log\left(1+\lambda_k^2\right) }{\sum_{k=n}^\infty  \lambda_k^2 }
\\&\geq
\sup_{\epsilon>0} \liminf_{n\to\infty}  \frac{\exp\left(-\frac12\sum_{k=0}^\infty \log\left(1+\lambda_k^2\right)\right) \frac12\sum_{k=n}^\infty (1-\epsilon)\lambda_k^2 }{\sum_{k=n}^\infty  \lambda_k^2 }
\\&=
\frac12 \exp\left(-\frac12\sum_{k=0}^\infty \log\left(1+\lambda_k^2\right)\right)
=
\frac{\E{\phi(X^\infty_1)}}{2}
\end{align*}
and
\begin{align*}
\hspace{2em}&\hspace{-2em}
\limsup_{n\to\infty} \frac{\E{\phi\left(X^n_1\right)} -
\E{\phi\left(X^\infty_1\right)}}{ \sum_{k=n}^\infty \lambda_k^2 }
\\&\leq
\limsup_{n\to\infty}  \frac{\exp\left(-\frac12\sum_{k=0}^{n-1} \log\left(1+\lambda_k^2\right)\right) \frac12\sum_{k=n}^\infty \log\left(1+\lambda_k^2\right) }{\sum_{k=n}^\infty  \lambda_k^2 }
\\&\leq
\limsup_{n\to\infty}  \frac{\exp\left(-\frac12\sum_{k=0}^{n-1} \log\left(1+\lambda_k^2\right)\right) \frac12\sum_{k=n}^\infty \lambda_k^2 }{\sum_{k=n}^\infty  \lambda_k^2 }
\\&=
\frac12 \exp\left(-\frac12\sum_{k=0}^\infty \log\left(1+\lambda_k^2\right)\right)
=
\frac{\E{\phi(X^\infty_1)}}{2}.
\qedhere
\end{align*}
\end{proof}

\section{Error analysis}\label{sec:error}

This section contains the proof of our main technical result, Proposition~\ref{prop:weak_error_strong}, where the convergence rate of noise approximations is analyzed in an abstract framework for semilinear stochastic evolution equations with multiplicative noise. 
Proposition~\ref{prop:weak_error_strong} is used in the proof of Theorem~\ref{thm:weak_errorG} and will be applied to various examples in Section~\ref{sec:examples}. 

\subsection{Setting}\label{sec:setting}
We will repeatedly use the following standard setting: 
let $T\in (0,\infty)$,
let $(H,\ip{\cdot}{\cdot}{H})$ and $(U, \ip{\cdot}{\cdot}{U})$ be separable $\RR$-Hilbert spaces, 
let $\mathbb U$ be an orthonormal basis of $U$, 
let $(\Omega, \mathcal F, (\mathcal F_t)_{t \in [0,T]},\mathbb P)$ be a stochastic basis, 
let $(W_t)_{t\in [0,T]}$ be an $\Id_U$-cylindrical $(\F_t)$-Wiener process, 
let $S \in B([0,\infty);L(H))$ be a uniformly bounded and strongly continuous semigroup, 
let $F\in \Lip(H)$, 
let $B\in \Lip(H; \HS(U; H))$, 
and let $\xi \in \cL^2(\Omega;H)$ be $\mathcal F_0\slash\borel{H}$-measurable. 

\subsection{Existence and uniqueness of solutions}\label{sec:existence}

The following lemma collects some results on existence and uniqueness of solutions of stochastic evolution equations, explicit a-priori bounds, and continuous dependence of the solution on the semigroup. 

\begin{lemma}
\label{lem:apriori}
Assume the setting of Section~\ref{sec:setting}. Then the following statements hold true:
\begin{enumerate}[label=(\roman*)]
\item
\label{item:apriori1} 
There exists an up to modifications unique predictable process $X\colon\Omega \times [0,T]\to H$ which satisfies that $\P{\int_0^T \norm{X_t}{H}^2 \d t <\infty} =1$ and for each $t \in [0,T]$, 
\begin{equation*}
\left[X_t\right]_{\PP,\borel{H}} = \left[S_t\xi + \int_0^t S_{t-s}
F(X_s)  \d s\right]_{\PP, \borel{H}} +
\int_0^t S_{t-s} B(X_s) \d W_s,	
\end{equation*}

\item
\label{item:apriori2} 
The process $X$ satisfies 
\begin{multline*}
\norm{X}{B([0,T];L^2(\Omega;H))} \leq \norm{S}{B([0,T];L(H))}\\
\times
\left(\norm{\xi}{L^2(\Omega;H)} +
\sqrt{2T}\norm{F}{\Lip(H)} +
\sqrt{2T}\norm{B}{\Lip(H,
  \HS(U;H))}\right)\\
\times \exp\left(T\norm{S}{B([0,T];L(H))}^2 \left(\frac12
+
\norm{F}{\Lip(H)}^2 + \norm{B}{\Lip(H;\HS(U;H))}^2\right)\right)
\end{multline*}

\item
\label{item:apriori3}
For $n\in\NN$ let $S^n\colon [0,\infty)\to L(H)$ be a strongly continuous semigroup such that for $x \in H$ 
\begin{align*}
\lim_{n\to\infty} \norm{S^nx-Sx}{B([0,T],H)}=0,
&&
\sup_{n\in\NN}\norm{S^n}{B([0,T],L(H))}<\infty,
\end{align*}
and let $X^n$ be the process $X$ given by~\ref{item:apriori1} with $S$ replaced by $S^n$. Then
\begin{equation*}
\lim_{n\to\infty} \norm{X-X^n}{B([0,T];L^2(\Omega;H))} =0.
\end{equation*}
\end{enumerate}  
\end{lemma}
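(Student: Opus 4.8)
\textbf{Proof proposal for Lemma~\ref{lem:apriori}.}

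The plan is to treat the three parts in sequence, reducing everything to the contraction/fixed-point machinery for mild solutions of semilinear SPDEs with Lipschitz coefficients. For part \ref{item:apriori1}, I would set up the standard Banach fixed-point argument in the space $B([0,T];L^2(\Omega;H))$ of predictable processes (or, to be safe about the integrability condition $\P{\int_0^T\|X_t\|_H^2\d t<\infty}=1$, first obtain a solution in the $L^2$-space and then argue that any predictable process satisfying the mild equation with the weaker integrability condition must coincide with it up to modifications, using a localization/stopping-time argument). The map $\Phi$ sending $Y$ to $t\mapsto S_t\xi+\int_0^t S_{t-s}F(Y_s)\d s+\int_0^t S_{t-s}B(Y_s)\d W_s$ is well-defined because $S$ is uniformly bounded, $F$ and $B$ are Lipschitz (hence of linear growth), and $\xi\in\cL^2(\Omega;H)$; It\^o's isometry controls the stochastic convolution. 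One shows $\Phi$ is a contraction after passing to an equivalent norm with an exponential weight $e^{-\beta t}$ (or by iterating $\Phi$ a fixed number of times), using the Lipschitz bounds on $F$ and $B$ together with $\|S\|_{B([0,T];L(H))}$; the inequality $(a+b)^2\le 2a^2+2b^2$ and $\left(\int_0^t\cdots\d s\right)^2\le T\int_0^t(\cdots)^2\d s$ are the only estimates needed. Uniqueness up to modifications is the contraction property.

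For part \ref{item:apriori2}, I would run the a-priori estimate directly on the mild equation: square the norm, split into the three terms with the factor-of-$2$ (or factor-of-$3$) inequality, bound $\|S_{t-s}\|_{L(H)}\le\|S\|_{B([0,T];L(H))}=:\|S\|$, use It\^o's isometry on the stochastic term, and insert the linear-growth bounds $\|F(x)\|_H\le\|F\|_{\Lip(H)}(1+\|x\|_H)$ and similarly for $B$ — which after the crude estimate $1+\|x\|_H^2\le\ldots$ (or handling the $1$ and $\|x\|_H$ terms separately and collecting constants) produces an inequality of the form
\begin{equation*}
\sup_{r\le t}\|X_r\|_{L^2(\Omega;H)}^2 \le c_1 + c_2\int_0^t \sup_{r\le s}\|X_r\|_{L^2(\Omega;H)}^2\d s
\end{equation*}
with $c_1,c_2$ the explicit constants appearing in the statement. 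Gronwall's lemma then gives the claimed exponential bound; matching the exact constants ($\sqrt{2T}$, the $\tfrac12$ inside the exponential, etc.) is just careful bookkeeping of where the $2$'s and $T$'s land, and one should double-check that the finiteness needed to justify Gronwall (i.e.\ that $\sup_{r\le t}\|X_r\|_{L^2(\Omega;H)}$ is finite to begin with) comes for free from the fixed-point construction in \ref{item:apriori1}.

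For part \ref{item:apriori3}, the idea is to estimate $X_t-X^n_t$ by subtracting the two mild equations. Writing the difference, one gets three groups of terms: $(S_t-S^n_t)\xi$; the drift terms, which split as $\int_0^t(S_{t-s}-S^n_{t-s})F(X_s)\d s + \int_0^t S^n_{t-s}(F(X_s)-F(X^n_s))\d s$; and the analogous splitting for the stochastic convolution. The second summand in each group is Lipschitz-controlled by $\int_0^t\|X_s-X^n_s\|^2\d s$ (uniformly in $n$, since $\sup_n\|S^n\|<\infty$), so after squaring and applying Gronwall it suffices to show that the ``frozen-integrand'' remainders, namely $\|(S_\cdot-S^n_\cdot)\xi\|$, $\int_0^T\|(S_{t-s}-S^n_{t-s})F(X_s)\|^2\d s$ and $\int_0^T\|(S_{t-s}-S^n_{t-s})B(X_s)\|_{\HS}^2\d s$, tend to $0$ in the appropriate $L^2(\Omega)$-sense. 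This is where the main work lies, and it is the step I expect to be the principal obstacle: the pointwise convergence $S^nx\to Sx$ plus the uniform bound $\sup_n\|S^n\|<\infty$ give convergence of $(S_{t-s}-S^n_{t-s})$ applied to a \emph{fixed} element, and one upgrades this to convergence of the integrals by dominated convergence — dominating by $(\|S\|+\sup_n\|S^n\|)^2\|F(X_s)\|_H^2$, which is integrable over $[0,T]\times\Omega$ by \ref{item:apriori2} and the linear growth of $F$, and similarly for $B$ using $\|\cdot\|_{\HS(U;H)}$; for the Hilbert--Schmidt term one also needs that $(S_{t-s}-S^n_{t-s})B(X_s)\to 0$ in Hilbert--Schmidt norm, which follows from pointwise convergence on each basis vector $e_k$ together with the uniform operator-norm bound and dominated convergence over $\UU$. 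Taking $\sup_{t\in[0,T]}$ at the end (using a Doob-type maximal inequality if one wants the $B([0,T];\cdot)$-norm rather than a pointwise-in-$t$ bound, although here $\|\cdot\|_{L^2(\Omega;H)}$ under the sup suffices and the sup can be pulled through by monotonicity of the Gronwall bound) completes the argument.
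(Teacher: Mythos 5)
Your parts \ref{item:apriori1} and \ref{item:apriori3} are essentially the paper's argument: \ref{item:apriori1} is the standard fixed-point construction (the paper simply cites \cite[Theorem~7.2]{daPrato2014se}, whose proof is what you describe), and \ref{item:apriori3} uses exactly the paper's decomposition of $X_t-X^n_t$ into ``semigroup-difference applied to the frozen solution'' plus ``fixed semigroup applied to the difference'', followed by Gronwall and dominated convergence; your remark about upgrading pointwise convergence to Hilbert--Schmidt convergence via summation over the basis is the right (implicit) ingredient, and you correctly observe that no maximal inequality is needed since the supremum sits outside the expectation.

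The gap is in part \ref{item:apriori2}. The statement is not merely ``some exponential a-priori bound'' but the specific inequality with leading coefficient $\norm{S}{B([0,T];L(H))}\norm{\xi}{L^2(\Omega;H)}$ and exponent $T\norm{S}{}^2(\tfrac12+\norm{F}{\Lip}^2+\norm{B}{\Lip}^2)$, and these constants feed into $C_2$ in Proposition~\ref{prop:weak_error_strong}. Your proposed route --- Minkowski/triangle inequality on the three terms of the mild equation, then squaring with a factor-of-$2$ or factor-of-$3$ inequality --- cannot produce this bound: it necessarily puts a coefficient of at least $2$ (after square roots, $\sqrt 2$) in front of $\norm{S}{}\norm{\xi}{L^2}$ and inflates the Gronwall exponent, because the cross term between the drift convolution $\int_0^t S_{t-s}F(X_s)\d s$ and the stochastic convolution does \emph{not} vanish in expectation (the drift integrand is correlated with the integrand of the stochastic integral), so it must be absorbed by a Young-type inequality at the cost of extra constants. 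The paper instead applies the \emph{mild It\^o formula} \cite[Corollary~1]{daprato2010mild} to $\psi(x)=\norm{x}{H}^2$, which yields the exact identity
\begin{equation*}
\E{\norm{X_t}{H}^2}=\E{\norm{S_t\xi}{H}^2}+2\,\EE\!\int_0^t\ip{S_{t-s}F(X_s)}{S_{t-s}X_s}{H}\d s+\EE\!\int_0^t\norm{S_{t-s}B(X_s)}{\HS(U;H)}^2\d s,
\end{equation*}
in which the martingale contribution enters only through the It\^o correction and the initial-condition term appears with coefficient exactly one; Cauchy--Schwarz, AM--GM and Gronwall then give precisely the stated constants. (The classical It\^o formula is not available here because $X$ is only a mild solution, which is why the mild version is the key tool.) So this step is not, as you suggest, ``careful bookkeeping of where the $2$'s and $T$'s land'': without the mild It\^o formula (or an equivalent device such as Yosida-regularizing the generator and passing to the limit) you prove a strictly weaker inequality than the one claimed.
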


\begin{proof}
\ref{item:apriori1} follows from
\cite[Theorem~7.2]{daPrato2014se} and moreover, we get 
\begin{equation*}
\sup_{t\in[0,T]}\mathbb E\left[ \norm{X_t}{H}^2\right]<\infty.
\end{equation*}
\ref{item:apriori2}: To derive the explicit bound, we follow the proof of
\cite[Lemma~2.1]{jacobe2015weak} and apply the mild It\^o
formula~\cite[Corollary 1]{daprato2010mild} to the transformation
$\phi(x) := \norm{x}{H}^2$, which belongs to $C^2(H;\R)$. This yields
\begin{align*}
\E{\norm{X_t}{H}^2} 
&= 
\E{\norm{S_t \xi}{H}^2} 
+2 \EE\int_0^t \ip{S_{t-s}F(X_s)}{S_{t-s}X_s}{H}  \d s 
\\&\qquad\qquad 
+\EE\int_0^t\norm{S_{t-s}B(X_s)}{\HS(U;H)}^2\d s
\\&\leq 
\E{\norm{S_t\xi}{H}^2} 
+2\sqrt{\EE\int_0^t\norm{S_{t-s}F(X_s)}{H}^2\d s}
\sqrt{\EE\int_0^t \norm{S_{t-s}X_s}{H}^2 \d s} 
\\&\qquad\qquad  
+\EE\int_0^t\norm{S_{t-s}B(X_s)}{\HS(U;H)}^2\d s
\end{align*}
By the inequality of arithmetic and geometric means, which states that all $x,y \in [0,\infty)$ satisfy $\sqrt{xy}\leq (x+y)/2$, it follows that
\begin{align*}
\E{\norm{X_t}{H}^2} 
&\leq 
\norm{S}{B([0,T];L(H))}^2
\left(\E{\norm{\xi}{H}^2}\vphantom{norm{S}{B([0,T];L(H))}^2\E{\int_0^t
\left(\norm{F}{\Lip(H)}^2 
+\norm{B}{\Lip(H;\HS(U;H))}^2\right)\left(1 +\norm{X_s}{H}\right)^2 
+\norm{X_s}{H}^2 \d s}}\right.
\\&\qquad
+ \left.\E{\int_0^t \left(\norm{F}{\Lip(H)}^2 
+\norm{B}{\Lip(H;\HS(U;H))}^2\right)\left(1 +\norm{X_s}{H}\right)^2 
+\norm{X_s}{H}^2 \d s}\right)
\\&\leq 
\norm{S}{B([0,T];L(H))}^2
\left(\norm{\xi}{L^2(\Omega,H)}^2 +
2T\norm{F}{\Lip(H)}^2 +
2T\norm{B}{\Lip(H;\HS(U;H)}^2\right)
\\&\qquad
+ \norm{S}{B([0,T];L(H))}^2 \left(1+2\norm{F}{\Lip(H)}^2 
+ 2\norm{B}{\Lip(H;\HS(U;H))}^2\right) 
\int_0^t \E{ \norm{X_s}{H}^2} \d s.
\end{align*}
By \ref{item:apriori1} all terms are finite. Thus, an application of Gronwall's lemma proves~\ref{item:apriori2}.

\ref{item:apriori3}: For each $n\in\NN$ and $t\in [0,T]$ we get by the triangle inequality, Minkowski inequality, Jensen's inequality, Fubini's theorem and It\^o's isometry that 
\begin{align*}
\hspace{2em}&\hspace{-2em}
\norm{X_t-X^n_t}{L^2(\Omega;H)}
\leq
\norm{(S_t-S^n_t)\xi}{L^2(\Omega;H)}
\\&\qquad
+\norm{\int_0^t(S_{t-s}-S^n_{t-s})F(X_s)\d s}{L^2(\Omega;H)}
+\norm{\int_0^t(S_{t-s}-S^n_{t-s})B(X_s)\d W_s}{L^2(\Omega;H)}
\\&\qquad
+\norm{\int_0^t S^n_{t-s}\left(F(X_s)-F(X^n_s)\right)\d s}{L^2(\Omega;H)}
+\norm{\int_0^t S^n_{t-s}\left(B(X_s)-B(X^n_s)\right)\d W_s}{L^2(\Omega;H)}
\\&\leq
\norm{(S-S^n)\xi}{L^2(\Omega;C([0,T];H))}
\\&\qquad
+\int_0^T\norm{(S-S^n)F(X_s)}{L^2(\Omega;C([0,T];H))}\d s
+\sqrt{\int_0^T\norm{(S-S^n)B(X_s)}{L^2(\Omega;C([0,T];H))}^2\d s}
\\&\qquad
+\norm{S^n_t}{B([0,T];L(H))}
\left(\norm{F}{\Lip(H)}\sqrt{T}+\norm{B}{\Lip(H;\HS(U;H))}\right)
\sqrt{ \int_0^t \norm{X_s-X^n_s}{L^2(\Omega;H)}^2\d s }.
\end{align*}
Taking the square and applying Jensen's inequality yields
\begin{align*}
\hspace{2em}&\hspace{-2em}
\norm{X_t-X^n_t}{L^2(\Omega;H)}^2
\leq
2\Bigg(
\norm{(S-S^n)\xi}{L^2(\Omega;C([0,T];H))}
\\&\qquad
+\int_0^T\norm{(S-S^n)F(X_s)}{L^2(\Omega;C([0,T];H))}\d s
+\sqrt{\int_0^T\norm{(S-S^n)B(X_s)}{L^2(\Omega;C([0,T];H))}^2\d s}
\Bigg) 
\\&\qquad
+2\norm{S^n}{B([0,T];L(H))}^2
\left(\norm{F}{\Lip(H)}\sqrt{T}+\norm{B}{\Lip(H;\HS(U;H))}\right)^2
\int_0^t \norm{X_s-X^n_s}{L^2(\Omega;H)}^2\d s.
\end{align*}
An application of Gronwall's lemma, which is justified by \ref{item:apriori2}, shows that
\begin{align*}
\hspace{2em}&\hspace{-2em}
\norm{X-X^n}{B([0,T],L^2(\Omega;H))}^2
\leq
2\Bigg(
\norm{(S-S^n)\xi}{L^2(\Omega;C([0,T];H))}
\\&\qquad
+\int_0^T\norm{(S-S^n)F(X_s)}{L^2(\Omega;C([0,T];H))}\d s
+\sqrt{\int_0^T\norm{(S-S^n)B(X_s)}{L^2(\Omega;C([0,T];H))}^2\d s}
\Bigg) 
\\&\qquad
\times \exp\bigg(2T\norm{S^n}{B([0,T];L(H))}^2
\left(\norm{F}{\Lip(H)}\sqrt{T}+\norm{B}{\Lip(H;\HS(U;H))}\right)^2\bigg).
\end{align*}
The right-hand side tends to zero as $n\to\infty$ by the dominated convergence theorem. 
\end{proof}

\subsection{Uniform bounds on solutions of Kolmogorov's equation}\label{sec:kolmogorov}
Under suitable regularity conditions the Markovian semigroup associated to a stochastic evolution equation satisfies the Kolmogorov equation. 
This is made precise in the following lemma.
Part~\ref{item:kolmogorov1} of the lemma bounds the spatial derivatives uniformly in the coefficients of the equation, 
and Part~\ref{item:kolmogorov2} establishes spatial and temporal differentiability and the Kolmogorov equation. 
Part~\ref{item:kolmogorov2} is well-known, and we will roughly
follow the idea in~\cite[Theorem 9.23]{daPrato2014se}. We will,
however, go into more details because of some inaccuracies in this
reference.\footnote{For example, the statement $u\in C^{1,2}_b$ is not
  correct if $H=\R$, $F\equiv 0$, $B\equiv 0$, $A = \Id_H$, and $\phi = \sin$ because $\sup_{x\in H}
  \abs{\ddt u(t,x)} = \sup_{x\in H} \abs{\cos(e^tx)x} = \infty$ for all $t\in[0,T]$.} 
An alternative would be to generalize the proof of \cite[Lemma~6.2.(xix)]{hefter2016weak} to our assumptions.

\begin{lemma}
\label{lem:uNuniform}
Assume that the setting of Section~\ref{sec:setting} holds true and,
additionally, 
assume that $S$ has a bounded generator $A\in L(H)$, 
$F\in C^2_b(H)$, 
and $B\in C^2_b(H; \HS(U; H))$, 
for each $x \in H$ let $X^x\colon [0,T] \times \Omega \to H$ 
be a predictable stochastic process which satisfies $\P{\int_0^T \norm{X^x_t}{H}^2\d t<\infty}=1$ and 
for all $t\in [0,T]$,
\begin{equation*}
\left[X_t^x\right]_{\PP,\borel{H}} = \left[ S_t x + \int_0^t S_{t-s}F(X^x_s)
\d s \right]_{\PP,\borel{H}} + \int_0^t S_{t-s}B(X^x_s)\d W_s,
\end{equation*}
let $\phi \in C^2_b(H;\R)$, and for each $t \in [0,T]$ and $x \in H$ let $u(t,x) = \E{\phi(X_{T-t}^x)}$.
Then the following statements hold true:
\begin{enumerate}[label=(\roman*)]
\item
\label{item:kolmogorov1}
For all $t\in [0,T]$ the map $(H\ni x \mapsto u(t,x)\in \R)$ is twice Fr\'echet differentiable and satisfies 
$\ddx u \in C_b([0,T]\times H; L(H;\R))$, $\ddxx u\in
C_b([0,T]\times H; L^{(2)}(H;\R))$, and
\begin{align}
\label{eq:dxuest}
\sup_{t \in [0,T]}\sup_{x \in H}\norm{\ddx u(t,x)}{L(H;\R)}
&\leq \norm{\phi}{C_b^1(H;\R)} C_3,
\\
\label{eq:dxxuest}
\sup_{t \in [0,T]}\sup_{x \in H}\mednorm{\ddxx u(t,x)}{L^{(2)}(H;\R)}
&\leq \norm{\phi}{C_b^2(H;\R)} (C_3^2+ C_4),
\end{align}
where the constants $C_3$ and $C_4$ are given by
\begin{align*}
C_3 &=\norm{S}{B([0,T];L(H))}
      \exp\left(T\norm{S}{B([0,T];L(H))}^2 \left( \norm{ F }{ C_{ {b} }^1( H ) } +
      \frac{1}{2} \norm{ B }{ C_{ {b} }^1( H; \HS( U;H ) )
      }^2 \right)  \right),
\\
C_4 &= \exp\left(T \left(
      \frac72 \norm{F}{C_b^2(H)}+
      4 \norm{ B }{ C_{ {b} }^1( H; \HS( U;H ) )
      }^2 \right)\norm{S}{B([0,T];L(H))}^4\right)   
\\&\qquad
    \times\sqrt{T}\norm{S}{B([0,T];L(H))}^3\sqrt{\norm{F}{C^2_b(H)} +
    2\norm{B}{C^2_b(H;\HS(U;H))}^2}.
\end{align*}

\item
\label{item:kolmogorov2}
The function $u$ is of class $C^{1,2}$ and satisfies for each $t \in [0,T]$ and $x \in H$ that
\begin{equation*}
-(\ddt u)(t,x) = (\ddx u)(t,x) (Ax + F(x)) + \frac12 \sum_{b \in \mathbb U} (\ddxx u)(t,x) (B(x)Pb,B(x)Pb).
\end{equation*}
\end{enumerate}
\end{lemma}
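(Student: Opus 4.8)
The plan is to prove the two parts of \autoref{lem:uNuniform} in order, with Part~\ref{item:kolmogorov1} being essentially an a-priori estimate on the first and second Fréchet derivatives of $u(t,\cdot)$ and Part~\ref{item:kolmogorov2} being a classical regularization/passage-to-the-limit argument once the spatial regularity is in hand.

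\textbf{Part (i): spatial derivatives and their bounds.} First I would establish that the map $x\mapsto X^x_t$ (more precisely a measurable version of it) is twice Fréchet differentiable from $H$ to $L^2(\Omega;H)$, with derivatives solving the formally differentiated equations: the first variation $Y^{x,h}_t := (\partial_x X^x_t)h$ solves the linear equation
\begin{equation*}
Y^{x,h}_t = S_t h + \int_0^t S_{t-s} F'(X^x_s) Y^{x,h}_s \d s + \int_0^t S_{t-s} B'(X^x_s) Y^{x,h}_s \d W_s,
\end{equation*}
and the second variation $Z^{x,h_1,h_2}_t := (\partial_{xx}X^x_t)(h_1,h_2)$ solves the analogous linear equation with an inhomogeneity built from $F''(X^x_s)(Y^{x,h_1}_s,Y^{x,h_2}_s)$ and $B''(X^x_s)(Y^{x,h_1}_s,Y^{x,h_2}_s)$. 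This is done by a standard fixed-point/implicit-function argument in the Banach space $B([0,T];L^2(\Omega;H))$, using that $F\in C^2_b(H)$ and $B\in C^2_b(H;\HS(U;H))$ so the Nemytskii operators are $C^2$; here the boundedness of the generator $A\in L(H)$ is convenient but not essential since we only ever use the mild formulation. Applying \autoref{lem:apriori}.\ref{item:apriori2} (with $\xi$ replaced by the deterministic initial datum) to these linear equations gives $\sup_t\|Y^{x,h}_t\|_{L^2(\Omega;H)}\le C_3\|h\|_H$ with the stated $C_3$, and then a second application to the $Z$-equation, estimating its inhomogeneity by $\|F\|_{C^2_b}+2\|B\|_{C^2_b}^2$ times $\|Y^{x,h_1}\|\,\|Y^{x,h_2}\|\le C_3^2\|h_1\|\|h_2\|$, gives $\sup_t\|Z^{x,h_1,h_2}_t\|_{L^2(\Omega;H)}\le C_4\|h_1\|_H\|h_2\|_H$. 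Since $u(t,x)=\E{\varphi(X^x_{T-t})}$ and $\varphi\in C^2_b$, the chain rule yields $\partial_x u(t,x)h = \E{\varphi'(X^x_{T-t})Y^{x,h}_{T-t}}$ and $\partial_{xx}u(t,x)(h_1,h_2)=\E{\varphi''(X^x_{T-t})(Y^{x,h_1}_{T-t},Y^{x,h_2}_{T-t})}+\E{\varphi'(X^x_{T-t})Z^{x,h_1,h_2}_{T-t}}$, whence Cauchy--Schwarz gives \eqref{eq:dxuest} and \eqref{eq:dxxuest}. Joint continuity in $(t,x)$ follows from continuity of $t\mapsto X^x_t$ in $L^2(\Omega;H)$, continuity of $x\mapsto X^x$, and dominated convergence; boundedness is immediate from the uniform estimates, so $\partial_x u\in C_b([0,T]\times H;L(H;\R))$ and $\partial_{xx}u\in C_b([0,T]\times H;L^{(2)}(H;\R))$.

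\textbf{Part (ii): $C^{1,2}$-regularity and the Kolmogorov equation.} With $A\in L(H)$ bounded the process $X^x$ is a genuine (strong) solution, so I would apply Itô's formula to $s\mapsto u(t+s, X^x_s)$ — but since we only know $u$ is $C^{1,2}$ \emph{after} proving it, the cleaner route is: first establish the integrated (Chapman--Kolmogorov / Markov) identity $u(t,x)=\E{u(t+r,X^x_r)}$ for $0\le t\le t+r\le T$, which is immediate from the Markov property of $X^x$; then differentiate in $r$ at $r=0$. To justify that differentiation and identify the limit, apply Itô's formula to $\varphi(X^{x}_r)$ itself for fixed terminal parameter (using $\varphi\in C^2_b$ and that $X^x$ is a strong solution with bounded $A$), giving
\begin{equation*}
\E{\varphi(X^x_r)} = \varphi(x) + \E{\int_0^r \Big(\varphi'(X^x_s)(AX^x_s+F(X^x_s)) + \tfrac12\smallsum_{b\in\mathbb U}\varphi''(X^x_s)(B(X^x_s)b,B(X^x_s)b)\Big)\d s},
\end{equation*}
and more generally, applying Itô to $u(T-r,X^x_r)$ once we know $u(T-r,\cdot)\in C^2_b$ from Part~(i), combined with $u(T-r,X^x_r)=\E{\varphi(X^{X^x_r}_r)\mid\mathcal F_r}$ being a martingale, forces the bounded-variation part to vanish, which is exactly the asserted PDE in integrated form. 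Differentiating the resulting absolutely continuous function $t\mapsto u(t,x)$ gives $\partial_t u\in C([0,T]\times H;\R)$ and the pointwise identity
\begin{equation*}
-(\partial_t u)(t,x) = (\partial_x u)(t,x)(Ax+F(x)) + \tfrac12\smallsum_{b\in\mathbb U}(\partial_{xx}u)(t,x)(B(x)Pb,B(x)Pb),
\end{equation*}
and joint continuity of $\partial_t u$ follows from that of the right-hand side, established in Part~(i). (Here $P$ is the projection from the setting of \autoref{sec:setting}; in the present lemma it is $\Id_U$ unless one is tracking the approximation, and the sum over $b\in\mathbb U$ converges since $B(x)\in\HS(U;H)$ and $\partial_{xx}u(t,x)$ is bounded bilinear.)

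\textbf{Main obstacle.} The delicate point is the differentiability of $x\mapsto X^x$ in $L^2(\Omega;H)$ up to second order \emph{with the explicit constants $C_3,C_4$} — one must set up the fixed-point argument so that the a-priori bound of \autoref{lem:apriori}.\ref{item:apriori2} applies verbatim to the variation equations (which requires rewriting them as affine equations with Lipschitz coefficients whose Lipschitz constants are $\|F\|_{C^1_b}$, $\|B\|_{C^1_b}$ and whose "$\xi$"-terms carry the inhomogeneity), and one must handle the subtlety flagged in the paper's footnote, namely that $\partial_t u$ need \emph{not} be bounded even though $\partial_x u,\partial_{xx}u$ are — so the Kolmogorov equation must be read pointwise and one may not claim $u\in C^{1,2}_b$. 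A secondary technical nuisance is rigorously justifying the exchange of derivative and expectation and the continuity-in-$(t,x)$ statements via dominated convergence, for which the uniform $L^2$-bounds on $Y$ and $Z$ from Part~(i) furnish the required dominating functions.
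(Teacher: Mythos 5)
Your outline follows the same route as the paper: represent $\partial_x u$ and $\partial_{xx}u$ via first and second variation processes, bound these by Gronwall-type arguments to get $C_3$ and $C_4$, and derive the Kolmogorov equation from the Markov identity plus It\^o's formula. (The paper does not redo the fixed-point construction of the variation processes; it cites Andersson et al.\ for their existence, representation and continuity, and only re-derives the quantitative bounds via the mild It\^o formula. That difference is cosmetic.) However, there are two genuine gaps in your execution.

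First, in Part (i) your estimate for the second variation does not close in $L^2(\Omega;H)$. The inhomogeneity of the $Z$-equation contains $F''(X^x_s)(Y^{x,h_1}_s,Y^{x,h_2}_s)$, and its $L^2(\Omega;H)$-norm is controlled by $\|F''\|_\infty\,\|Y^{x,h_1}_s\|_{L^4(\Omega;H)}\|Y^{x,h_2}_s\|_{L^4(\Omega;H)}$, not by the product of $L^2$-norms; your bound ``$\|Y^{x,h_1}\|\,\|Y^{x,h_2}\|\leq C_3^2\|h_1\|\|h_2\|$'' uses $C_3$, which is an $L^2$-estimate, where a fourth-moment estimate is required. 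The paper therefore proves an $L^{2p}$-bound on $X^{1,(x,v)}$ for all $p\geq 1$ (with constant $\|S\|\exp(T(\|F\|_{C^1_b}+\tfrac{2p-1}{2}\|B\|_{C^1_b}^2)\|S\|^{2p})$, which for $p=2$ is strictly larger than $C_3$) and feeds the $p=2$ case into the $Z$-estimate; this is exactly how the factor $\exp(T(\tfrac72\|F\|_{C^2_b}+4\|B\|_{C^1_b}^2)\|S\|^4)$ in $C_4$ arises. Relatedly, Lemma~\ref{lem:apriori}.\ref{item:apriori2} applied ``verbatim'' does not yield $C_3$: its exponential carries $\tfrac12+\|F\|_{\Lip}^2+\|B\|_{\Lip}^2$ and extra $\sqrt{2T}$ terms, whereas $C_3$ has $\|F\|_{C^1_b}+\tfrac12\|B\|_{C^1_b}^2$; one must redo the mild-It\^o/Gronwall computation for the homogeneous linear variation equation to get the stated constants.

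Second, in Part (ii) the argument as written is circular. Applying It\^o's formula to $r\mapsto u(T-r,X^x_r)$ (or invoking that its bounded-variation part must vanish because it is a martingale) presupposes that $u$ is $C^{1,2}$ in $(t,x)$, which is precisely what is to be proved; and applying It\^o only to $\varphi(X^x_r)$ identifies $\partial_t u$ at $t=T$ only. The non-circular step, which the paper takes, is to freeze $t_0$, write $u(t_0-h,x)-u(t_0,x)=\E{u(t_0,X^x_h)-u(t_0,x)}$ via the Markov property, and apply the classical It\^o formula to the purely spatial function $u(t_0,\cdot)\in C^2_b(H)$ along the strong solution $X^x$ (here the boundedness of $A$ is used). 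Dividing by $h$ and letting $h\searrow 0$ gives the left time-derivative, and its continuity in $t_0$ (from Part (i)) upgrades left differentiability to continuous differentiability. Your final caveat that $\partial_t u$ need not be bounded is correct and consistent with the paper's footnote.
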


\begin{proof}
\ref{item:kolmogorov1}: 
Note that the generator $A$ of $S$ is bounded and thus sectorial. 
Hence, by \cite[Theorem 3.3.(iii)]{andersson2018regularity} we get
for each $t\in [0,T]$ that the mapping $H\ni x\mapsto u(t,x)\in\R$ is of class $C^2_b(H;\R)$. 
Moreover,~\cite[Theorem~3.3.(v)]{andersson2018regularity} 
yields that for each $t\in [0,T]$ and $x, v, w\in H$, 
\begin{align}
\label{eq:ux}
(\ddx u)(t,x)v &= \E{\phi'(X_{T-t}^{x}) X_{T-t}^{1,(x,v)}},
\\
\label{eq:uxx}
(\ddxx u)(t,x)(v,w) &= \E{\phi''(X_{T-t}^{x})(X_{T-t}^{1,(x,v)},
X_{T-t}^{1,(x,w)}) +
\phi'(X_{T-t}^{x})(X_{T-t}^{2,(x,v,w)})},
\end{align}
where for each $x, v, w\in H$ the stochastic processes
$X^{1,(x,v)},X^{2,(x,v,w)}\colon [0,T]\times \Omega\to H$ are predictable and satisfy that 
$\P{\int_0^T \left(\|X_t^{1,(x,v)}\|_H^2 + \|X_t^{2,(x,v,w)}\|_H^2\right) \d t < \infty}  = 1$
and for each $t\in [0,T]$
\begin{align*}
\left[X_t^{1,(x,v)}\right]_{\PP,\borel{H}} 
&= \left[S_t v +
  \int_0^t S_{t-s} \left( F'(X^x_s)X_s^{1,(x,v)} \right)\d s \right]_{\PP,\borel{H}}
\\&\qquad
+\int_0^t S_{t-s}\left(B'(X_s^{x}) X_s^{2,(x,v,w)}\right) \d W_s, \\
\left[X_t^{2,(x,v,w)}\right]_{\PP,\borel{H}} 
&= \left[\int_0^t S_{t-s}
  \left(F'(X^{x}_s)X_s^{2,(x,v,w)} \right.\right.\\
&\qquad\qquad\qquad \left.\left.+ F''(X^{x}_s)(X_s^{1,(x,v)},X_s^{1,(x,w)})\right)\d s\vphantom{\int_0^t} \right]_{\PP,\borel{H}}
\\
&\qquad+\int_0^t S_{t-s}\left(B'(X_s^{x})
  X_s^{2,(x,v,w)}\right.\\
&\qquad\qquad\qquad \left.+ B''(X_s^{x})(X_s^{1,(x,v)}, X_s^{1,(x,w)})\right) \d W_s.
\end{align*}
Hence, it holds for each $t\in [0,T]$ and $x, v, w\in H$ that
\begin{align}
\label{eq:norm_ux}
\abs{\left(\ddx u\right)(t,x)v} &\leq \norm{\phi}{C^1_b(H,\R)}
\E{\mednorm{X_{T-t}^{1,(x,v)}}{H}},
\\
\label{eq:norm_uxx}
\big|\big(\ddxx u\big)(t,x)(v,w)\big| 
&\leq 
\norm{\phi}{C^2_b(H,\R)}
\E{\mednorm{X_{T-t}^{1,(x,v)}}{H}\mednorm{X_{T-t}^{1,(x,w)}}{H}+ \mednorm{X_{T-t}^{2,(x,v,w)}}{H}}.
\end{align}
From \cite[Theorem~3.3.(vi)]{andersson2018regularity} we get for
each $p\in[1,\infty)$ and $x, v, w\in H$ that
\begin{equation*}
\sup_{t\in[0,T]} \E{\mednorm{X_t^{1,(x,v)}}{H}^{2p}+\mednorm{X_t^{2,(x,v,w)}}{H}^{2p}} <\infty.
\end{equation*}
To derive explicit bounds, we use mild It\^o
calculus and proceed as in the proof of Lemma~\ref{lem:apriori}.\ref{item:apriori2}, 
following the proof of \cite[Lemma~2.1]{jacobe2015weak}. 
To this end, note that for all $p\in[1,\infty)$ 
the function $\psi_p(x) := \norm{x}{H}^{2p}$ is
twice Fr\'echet differentiable with derivatives, for $x$, $v$, $w\in H$,
\begin{align*}
\psi_p'(x)v &= 2p \ip{x}{v}{H} \norm{x}{H}^{2p-2},\\
\psi_p''(x)(v,w) &=
                   \begin{cases}
                     2 \ip{v}{w}{H}, & p =1         \\
                     0,& x=0, p>1\\                     
                     2p\ip{v}{w}{H}\norm{x}{H}^{2p-2} +4p(p-1) \ip{x}{v}{H} \ip{x}{w}{H}
                     \norm{x}{H}^{2p-4},& x\neq 0,p>1\\
                   \end{cases}
\end{align*}
In particular, it holds for each $p\in[1,\infty)$ by the Cauchy--Schwarz inequality that
\begin{align*}
\abs{\psi_p'(x)v} &\leq 2p \norm{v}{H} \norm{x}{H}^{2p-1},
\\
\abs{\psi_p''(x)(v,w)} &\leq 2p \norm{w}{H}\norm{v}{H}\norm{x}{H}^{2p-2}
+ 4p(p-1) \norm{v}{H}\norm{w}{H} \norm{x}{H}^{2p-2}.
\end{align*}
Then we get by the mild It\^o formula \cite[Corollary~1]{daprato2010mild} 
for each $p\in [1,\infty)$ and $x, v\in H$ that
\begin{align*}
& \EE \left[ \mednorm{ X_t^{1,(x, v) } }{H}^{2p} \right] 
= 
\E{ \psi_p\big(X_t^{1,(x,v)}\big)} 
\\& = 
\psi_p \big( S_t v \big) + \int_{0}^{t} \E{
\psi_p' \big( S_{t-s} X_s^{1,(x,v)} \big) S_{t-s}
F'( X_s^x )X_s^{1,(x,v)} } \d s
\\& \qquad 
+ \frac{1}{2} \sum_{ b \in \UU } \int_{0}^{t} \EE\left[
\psi_p'' \big( S_{t-s}X_s^{1,(x,v)} \big) \left( S_{t-s} \big( B'( X_s^x )X_s^{1,(x,v)} \big) b,
\right.\right. 
\\&\qquad\qquad\qquad\qquad\qquad\qquad\qquad\qquad\qquad\qquad
\left.\left. S_{t-s} \big( B'( X_s^x )X_s^{1,(x,v)} \big) b \right) \right] \d s 
\\& \leq 
\norm{ v }{H}^{2p} \norm{S}{B([0,T];L(H))}^{2p} + 2p \norm{S}{B([0,T];L(H))}^{2p} \norm{ F }{ C_{ {b} }^1( H; H ) } \int_{0}^{t} \EE \left[ \mednorm{X_s^{1,(x,v)} }{H}^{2p} \right] \d s 
\\& \qquad 
+ p \norm{S}{B([0,T];L(H))}^{2p} \norm{ B }{ C_{ {b} }^1( H; \HS( U;H ) ) }^2 \int_{0}^{t} \EE \left[ \mednorm{X_s^{1,(x,v)} }{H}^{2p} \right] \d s 
\\& \qquad + 2p(p-1) \norm{S}{B([0,T];L(H))}^{2p} \norm{ B }{ C_{ {b} }^1( H; \HS( U;H ) ) }^2 \int_{0}^{t} \EE \left[ \mednorm{X_s^{1,(x,v)} }{H}^{2p} \right] \d s 
\\& = 
\norm{ v }{H}^{2p} \norm{S}{B([0,T];L(H))}^{2p}  
+ \int_{0}^{t} \EE \left[ \mednorm{X_s^{1,(x,v)} }{H}^{2p} \right] \d s
\\&\qquad\qquad 
\times p \norm{S}{B([0,T];L(H))}^{2p} \left( 2\norm{ F }{ C_{ {b} }^1( H; H ) } + (2p-1) \norm{ B }{ C_{ {b} }^1( H; \HS( U;H ) ) }^2 \right),
\end{align*}
which implies by Gronwall's lemma that
\begin{multline*}
\sup_{0\leq t\leq T}\mednorm{X_t^{1,(x,v)}}{L^{2p}(\Omega; H)}     
\leq
\norm{v}{H} \norm{S}{B([0,T];L(H))}
\\
\times \exp\left(T \left( \norm{ F }{ C_{ {b} }^1( H; H ) } +
  \frac{(2p-1)}{2} \norm{ B }{ C_{ {b} }^1( H; \HS( U;H ) )
  }^2 \right) \norm{S}{B([0,T];L(H))}^{2p}\right).
\end{multline*}
In the special case $p=1$ this shows for all $x,v \in H$ that
\begin{equation}\label{eq:XtN1xv}
\sup_{t \in [0,T]} \mednorm{X_t^{1,(x,v)}}{L^{2}(\Omega; H)} \leq C_3 \norm{v}{H}.
\end{equation}
Plugging this into \eqref{eq:norm_ux} proves \eqref{eq:dxuest}. 
For $X_t^{2,(x,v,w)}$, we need only an $L^2$ estimate, which we get
from the mild It\^o formula \cite[Corollary~1 and Example~2]{daprato2010mild} and the Cauchy--Schwarz inequality: for each $t \in [0,T]$ and $x,v,w \in H$ it holds that
{\allowdisplaybreaks
\begin{align*}
\hspace{2em}&\hspace{-2em}
\E{\mednorm{X_t^{2,(x,v,w)}}{H}^2} 
\\&\leq 
2 \int_0^t\E{ \abs{\ip{S_{t-s} X_s^{2,(x,v,w)}}{S_{t-s} F'(X_s^{x})X_s^{2,(x,v,w)}}{H}}}\d s
\\&\quad 
+2 \int_0^t\E{\abs{ \ip{S_{t-s} X_s^{2,(x,v,w)}}{S_{t-s} F''(X_s^{x})(X_s^{1,(x,v)}, X_s^{1,(x,w)})}{H}}\d s }
\\&\quad
+ \int_0^t \E{\norm{S_{s}\left( B'(X_s^{x}) X_s^{2,(x,v,w)}
+ B''(X_s^{x})(X_s^{1,(x,v)},X_s^{1,(x,w)})\right)}{\HS(U;H)}^2}\d
s 
\\&\leq 
\norm{S}{B([0,T];L(H))}^2\left(2
\norm{F}{C^1_b(H)} \int_0^t \E{\mednorm{X_s^{2,(x,v,w)}}{H}^2}\d
s  \right.
\\* &\qquad\quad 
+ \norm{F}{C^2_b(H)} \int_0^t \E{\mednorm{X_s^{2,(x,v,w)}}{H}^2 
+ \mednorm{X_s^{1,(x,v)}}{H}^2\mednorm{X^{1,(x,w)}_s}{H}^2} \d s 
\\* &\qquad\quad 
+ 2\norm{B}{C^1_b(H;\HS(U;H))}^2 \int_0^t\E{\mednorm{X_s^{2,(x,v,w)}}{H}^2}\d s
\\* &\qquad\quad 
+\left. 2\norm{B}{C^2_b(H;\HS(U;H))}^2 
\int_0^t\E{\mednorm{X_s^{1,(x,v)}}{H}^2\mednorm{X_s^{1,(x,w)}}{H}^2}\d s\right)
\\&\leq 
\norm{S}{B([0,T];L(H))}^2 t(\norm{F}{C^2_b(H)} +
2 \norm{B}{C^2_b(H;\HS(U;H))}^2)\\
&\qquad\qquad\qquad\qquad\times
\sup_{0\leq s\leq t}\mednorm{X_s^{1,(x,v)}}{L^4(\Omega;H)}^2\mednorm{X_s^{1,(x,w)}}{L^4(\Omega;H)}^2\\
&\quad\!+\!\norm{S}{B([0,T];L(H))}^2 \left( 3\norm{F}{C^2_b(H)}
\!+\! 2\norm{B}{C^1_b(H;\HS(U;H))}^2\!\right) \int_0^t \E{\mednorm{X_s^{2,(x,v,w)}}{H}^2}\!\d s .
\end{align*}
}
Hence, Gronwall's lemma shows for all $x,v,w \in H$ that
\begin{equation}
\label{eq:XtN2xvw}
\sup_{t \in [0,T]} \mednorm{X_t^{2,(x,v,w)}}{L^2(\Omega;H)} 
\leq C_4 \norm{v}{H}\norm{w}{H}.
\end{equation}
Inserting the bounds \eqref{eq:XtN1xv} and \eqref{eq:XtN2xvw} into
\eqref{eq:norm_uxx} proves \eqref{eq:dxxuest}.

It remains to verify the continuity claims of the statement. 
By \cite[Theorem 2.1.(vii)]{andersson2017differentiability}
it holds for each $p\in (2,\infty)$, $t \in [0,T]$, and $x \in H$ that
\begin{equation}\label{eq:equicont}
\begin{aligned}
\lim_{y\to x} \sup_{t\in [0,T]} \norm{X_t^{x} -
  X_t^{y}}{L^p(\Omega;H)} &= 0,\\
\lim_{y\to x} \sup_{t\in [0,T]}\mednorm{X_t^{1,(x,.)} -
  X_t^{1,(y,.)}}{L(H;L^p(\Omega;H))} &= 0,\\
\lim_{y\to x} \sup_{t\in [0,T]}\mednorm{X_t^{2,(x,\cdot,\cdot)} -
  X_t^{2,(y,\cdot,\cdot)}}{L^{(2)}(H;L^p(\Omega;H))} &= 0.
\end{aligned}
\end{equation}
Moreover, for each $x,v,w \in H$ the processes $X^x$, $X^{1,(x,v)}$, and $X^{2,(x,v,w)}$ admit continuous modifications. 
Thus, Burkholder--Davis--Gundy type inequalities and the dominated convergence theorem yield for each $p\in (2,\infty)$ and $x \in H$ that the mapping $[0,T]\ni t\mapsto X_t^{x} \in L^{p}(\Omega;H)$ is continuous; 
see also \cite[Proof of Lemma 6.2.(xiii)]{hefter2016weak}. 
Similarly, the proof of \cite[Lemma 6.2.(xiv), p 27]{hefter2016weak} shows for each $p \in (2,\infty)$ and $x,v,w \in H$ that the mappings $[0,T]\ni t\mapsto X_t^{1,(x, \cdot)} \in L(H;L^{p}(\Omega;H))$ and $[0,T]\ni t\mapsto X_t^{2,(x,\cdot,\cdot)} \in L^{(2)}(H;L^{p}(\Omega;H))$ are continuous. 
This and~\eqref{eq:equicont} implies for all $p\in (2,\infty)$, $t \in [0,T]$ and $x \in H$ that
\begin{equation}\label{eq:X_cont_xt}
\begin{aligned}
\lim_{s\to t, y\to x} \norm{X_t^x -
  X_s^y}{L^p(\Omega;H)} &= 0,\\
\lim_{s\to t, y\to x} \mednorm{X_t^{1,(x,\cdot)} -
  X_s^{1,(y,\cdot)}}{L(H;L^p(\Omega;H))} &= 0,\\
\lim_{s\to t, y\to x} \mednorm{X_t^{2,(x,\cdot,\cdot)} -
  X_s^{2,(y,\cdot,\cdot)}}{L^{(2)}(H;L^p(\Omega;H))} &= 0.
\end{aligned}
\end{equation}
Thanks to the continuity and boundedness of $\phi'$ and $\phi''$, the subsequence criterion, and the dominated convergence theorem, this implies for all $p\in (2,\infty)$, $t \in [0,T]$ and $x \in H$ that
\begin{align*}
\lim_{\substack{s\to t\\y\to x}}
\norm{\phi' \circ X_t^x -\phi'\circ X_s^y}{L^p(\Omega;L(H;\R))} 
=
\lim_{\substack{s\to t\\y\to x}}
\norm{\phi'' \circ X_t^x -  \phi''\circ X_s^y}{L^p(\Omega;L^{(2)}(H;\R))} = 0.
\end{align*}
Now, for each $p\in (2,\infty)$, $t \in [0,T]$, and $x\in H$ it holds by the triangle inequality, H\"older inequality,~\eqref{eq:ux} and~\eqref{eq:X_cont_xt},
\begin{align*}
\hspace{2em}&\hspace{-2em}
\lim_{\substack{s\to t\\y\to x}}\sup_{\substack{v\in H\\ \norm{v}{H} \leq 1}}\abs{(\ddx u)(T-t,x)v - (\ddx u)(T-s,y)v}
\\& \leq 
\lim_{\substack{s\to t\\y\to x}}\norm{\phi' \circ X_t^x- \phi' \circ
X_s^y}{L^{p}(\Omega;L(H;\R))} \sup_{\substack{v\in H\\ \norm{v}{H} \leq 1}}\mednorm{X_t^{1,(x,v)}}{L(H;L^{\frac{p}{p-1}}(\Omega;H))}\\
&\qquad+
\lim_{\substack{s\to t\\y\to x}}\norm{\phi}{C^1_b(H;\R)}
\sup_{\substack{v\in H\\ \norm{v}{H}
\leq 1}}\mednorm{X_t^{1,(x,v)} -
X_s^{1,(y,v)}}{L^1(\Omega;H)}=0.
\end{align*}
For the second derivative one obtains similarly for each $t \in [0,T]$ and $x \in H$ that
\begin{align*}
\hspace{2em}&\hspace{-2em}
\lim_{\substack{s\to t\\y\to x}}\sup_{\substack{v,\,w\in H\\ \norm{v}{H},\norm{w}{H}\leq1}}\mednorm{\phi'(X_t^x)X_t^{2,(x,v,w)} -\phi'(X_s^y)X_s^{2,(y,v,w)}}{L^1(\Omega;\mathbb R)}
\\&\leq 
\lim_{\substack{s\to t\\y\to x}} \norm{\phi'\circ X_t^x-
\phi' \circ X_s^y}{L^{p}(\Omega;L(H;\R)))}
\mednorm{X_t^{2,(x,\cdot,\cdot)}}{L^{(2)}(H;L^{\frac{p}{p-1}}(\Omega;H))}
\\&\qquad
+ \lim_{\substack{s\to t\\y\to x}} \norm{\phi'}{C_b(H;\R)} 
\mednorm{X_t^{2,(x,\cdot,\cdot)} -
X_s^{2,(y,\cdot,\cdot)}}{L^{(2)}(H;L^p(\Omega;H))}=0,
\end{align*}
and, for $p$, $p_0$, $p_1$, $p_2\in (2,\infty)$ with
$\frac1{p_0}+\frac1{p_1}+\frac1{p_2} = 1$,
\begin{align*}
\hspace{2em}&\hspace{-2em}
\lim_{\substack{s\to t\\y\to x}}\sup_{\substack{v,\,w\in H\\
\norm{v}{H},\norm{w}{H}\leq1}}\mednorm{\phi''(X_t^x)(X_t^{1,(x,v)},
X_t^{1,(x,w)}) - \phi''(X_{s}^{x})(X_{s}^{1,(x,v)},
X_{s}^{1,(x,w)})}{L^1(\Omega;\mathbb R)}
\\&\leq 
\lim_{\substack{s\to t\\y\to x}}
  \mednorm{\phi''\circ X_t^x - \phi''\circ X_s^y}{L^{p_0}(\Omega;L^{(2)}(H;\R))}
\mednorm{X_t^{1,(x,\cdot)}}{L(H;L^{p_1}(\Omega;H))} 
\mednorm{X_t^{1,(x,\cdot)}}{L(H;L^{p_2}(\Omega;H))}
\\&
+\lim_{\substack{s\to t\\y\to x}}\norm{\phi''}{C_b(H;L^{(2)}(H;\mathbb R))}
\mednorm{X_t^{1,(x,\cdot)} - X_s^{1,(y,\cdot)}}{L(H;L^{p}(\Omega;H))} \mednorm{X_t^{1,(x,\cdot)}}{L(H;L^{\frac{p}{p-1}}(\Omega;H))} 
\\&
+\lim_{\substack{s\to t\\y\to x}}\norm{\phi''}{C_b(H;L^{(2)}(H;\mathbb R))}
\mednorm{X_s^{1,(y,\cdot)}}{L(H;L^{p}(\Omega;H))}
\mednorm{X_t^{1,(x,\cdot)}-X_s^{1,(y,\cdot)}}{L(H;L^{\frac{p}{p-1}}(\Omega;H))},
\end{align*} 
which is actually $0$. Plugging this into~\eqref{eq:uxx} yields
\begin{equation*}
\lim_{\substack{s\to t\\y\to x}}
\sup_{\substack{v,\,w\in H\\\norm{v}{H},\norm{w}{H}\leq1}}  
\abs{\ddxx u(T-t,x)(v,w) - \ddxx u(T-s, y)(v,w)} = 0.
\end{equation*}
Thus, we have shown the continuity of $\ddx u$ and $\ddxx u$. This proves \ref{item:kolmogorov1}.
  
\ref{item:kolmogorov2}: We follow the idea in~\cite[Theorem~9.23]{daPrato2014se}.
By \cite[Lemma 6.1.(ii)]{hefter2016weak} it holds for all $t, h\in [0,T]$ with $t+h\leq T$ that
\begin{equation*}
u(t,x) = \E{\phi(X_{T-t}^x)} = \E{u(t+h, X_h^x)}.
\end{equation*}
Fix now $t_0\in (0,T]$. 
As $A\in L(H)$, it holds for all $t\in [0,T]$ and $x\in H$ that
\begin{equation*}
\left[X_t^x\right]_{\PP;\borel{H}} = \left[x+ \int_0^t A X_s^x +
F(X_s^x)\d s \right]_{\PP;\borel{H}}
+\int_0^t B(X_s^x)\d W_s.
\end{equation*}
By the classical (as opposed to mild) It\^o formula it holds for all $h\in [0,t_0]$ that
\begin{equation}\label{eq:ut0hdiff}\begin{aligned}
u(t_0-h, x)-u(t_0,x) &= \E{u(t_0, X_{h}^x) - u(t_0,x)} \\
&=\EE\int_0^h (\ddx u(t_0,X_s^x)) \left( A X_s^x + F(X_s^x)
\right)\d s 
\\&\qquad
+ \frac12 \EE\int_0^h \sum_{b \in \UU}(\ddxx u(t_0,X_s^x))(X_s^x) \big(B(X_s^x)b,B(X_s^x)b\big)\d s.
\end{aligned}\end{equation}
Indeed, the process $(\int_0^t (\ddx u)(t_0,X^x_s)B(X_s^x)\d W_s)_{t\in[0,T]}$ is a true martingale thanks to
Lemma~\ref{lem:apriori}.\ref{item:apriori2} and the estimate 
\begin{align*}
\hspace{2em}&\hspace{-2em}
              \E{\int_0^T \norm{(\ddx u)(t_0,X^x_t)  B(X^x_t)}{\HS(U;H)}^2 \d t}
  \\&\leq
      T\norm{\ddx u}{C_b([0,T]\times H;L(H;\R))}^2 \sup_{t\in[0,T]} \E{ \norm{ B( X^x_t)}{\HS(U;H)}^2 }
  \\&\leq
      T \norm{\ddx u}{C_b([0,T]\times H;L(H;\R))}^2 
 \norm{B}{C^1_b(H;\HS(U;H))}^2 \sup_{t\in[0,T]} \E{\left(1+\norm{ X^x_t}{H}\right)^2} < \infty.
\end{align*}
The integrands under the time integral in~\eqref{eq:ut0hdiff} are continuous because for each $x \in H$ and $p \in (2,\infty)$ the mapping $[0,T]\ni t\mapsto X_t^x\in
L^p(\Omega; H)$ is continuous, 
$u(t_0,\cdot)\in C^2_b(H)$, 
and the mappings $A$, $F$, and $B$ are Lipschitz continuous. 
Thus, the mean value theorem for integration gives
\begin{align*}
\big(\tfrac{\partial^-\!\!}{\partial t\,} u\big)(t_0,x) 
&:= -\lim_{h\searrow 0}\frac1{h} \left(u(t_0-h,x) - u(t_0,x)\right) \\
&=-(\ddx u)(t_0,x)\left(Ax + F(x) \right)- \frac12 \sum_{b\in \UU} (\ddxx u)(t_0,x)\left( B(x) b, B(x)b\right).
\end{align*}
As the right hand side is continuous in $t_0\in [0,T]$ by part
\ref{item:kolmogorov1}, we get for all $x\in H$ continuity of the mapping $[0,T]\ni t\mapsto \big(\tfrac{\partial^-\!\!}{\partial t\,} u\big)(t,x) \in \R$.
Summarizing, for each $x\in H$, the mapping $[0,T]\ni t\mapsto u(t,x)\in \R$ is left-sided differentiable on $(0,T]$ with continuous left derivative on $[0,T]$. 
Therefore, it is continuously differentiable on $[0,T]$ with $\ddt u = \tfrac{\partial^-\!\!}{\partial t\,} u$.
\end{proof}

\subsection{Yosida approximations}\label{sec:yosida}
For later usage we summarize some properties of Yosida approximations. 
The lemma is formulated under a uniform boundedness assumption on the semigroup, which leads to a simpler estimate in \ref{item:YosidaApprox2}. 
In the context of semilinear stochastic evolution equations this assumption can always be guaranteed by adding and subtracting a multiple of the identity to the generator of the semigroup and the nonlinear part of the drift, respectively.
\begin{lemma}\label{lem:YosidaApprox}
Let $(E, \norm{\cdot}{E})$ be an $\R$-Banach space 
and let $A\colon \dom(A) \subseteq E \to E$ be the generator of a uniformly bounded and strongly continuous semigroup $S$. Then the following statements hold true. 
\begin{enumerate}[label=(\roman*)]
\item\label{item:YosidaApprox1}
The interval $(0,\infty)$ is contained in the resolvent set of $A$, and for each $\lambda \in (0,\infty)$ the bounded linear operator $A_\lambda:= A(\Id_E - A/\lambda)^{-1}$
generates a uniformly continuous semigroup $S^\lambda$.

\item\label{item:YosidaApprox2}
The semigroups $S^\lambda$ satisfy for each $x\in E$ and $T>0$ that
\begin{align*}
\sup_{\lambda \in (0,\infty)} \norm{S^\lambda}{B([0,\infty);L(E))} 
&\leq
\norm{S}{B([0,\infty); L(E))}, 
&
\lim_{\lambda\to \infty} \sup_{t \in [0, T]} \norm{S_tx - S_t^\lambda x}{E} &= 0.
\end{align*}
\end{enumerate}
\end{lemma}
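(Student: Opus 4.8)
The plan is to derive everything from the classical resolvent estimates for $C_0$-semigroups together with one elementary power-series computation; since the argument is short we carry it out explicitly (so that the constant in \ref{item:YosidaApprox2} is visibly just $\norm{S}{B([0,\infty);L(E))}$) rather than merely citing a textbook. Throughout write $M := \norm{S}{B([0,\infty);L(E))}$, note that $M\geq1$, and for $\lambda\in(0,\infty)$ set $R_\lambda := (\lambda\Id_E - A)^{-1}$ once this operator is known to exist. For \ref{item:YosidaApprox1} I would first invoke the Laplace-transform representation of the resolvent of a uniformly bounded $C_0$-semigroup: every $\lambda\in(0,\infty)$ lies in the resolvent set of $A$, $R_\lambda x = \int_0^\infty e^{-\lambda t}S_tx\d t$, and, differentiating $n-1$ times in $\lambda$, $\norm{(\lambda R_\lambda)^n}{L(E)}\leq M$ for every integer $n\geq0$. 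Since $\Id_E - A/\lambda = \lambda^{-1}(\lambda\Id_E - A)$, one has $(\Id_E - A/\lambda)^{-1} = \lambda R_\lambda$, whence $A_\lambda = \lambda AR_\lambda = \lambda(\lambda R_\lambda - \Id_E) = \lambda^2 R_\lambda - \lambda\Id_E \in L(E)$; a bounded operator generates the uniformly continuous semigroup $S^\lambda_t := \exp(tA_\lambda) = \sum_{n\geq0}t^nA_\lambda^n/n!$, which proves \ref{item:YosidaApprox1}.

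For the uniform bound in \ref{item:YosidaApprox2} I would exploit the identity $A_\lambda = \lambda^2 R_\lambda - \lambda\Id_E$ and the commutativity of $R_\lambda$ and $\Id_E$ to write $S^\lambda_t = e^{-\lambda t}\exp(t\lambda^2 R_\lambda) = e^{-\lambda t}\sum_{n\geq0}\frac{(t\lambda^2)^n}{n!}R_\lambda^n$, and then estimate term by term using $\norm{R_\lambda^n}{L(E)} = \lambda^{-n}\norm{(\lambda R_\lambda)^n}{L(E)}\leq M\lambda^{-n}$ to get $\norm{S^\lambda_t}{L(E)}\leq Me^{-\lambda t}\sum_{n\geq0}(t\lambda)^n/n! = M$ for all $t\geq0$ and $\lambda\in(0,\infty)$. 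This is exactly $\sup_{\lambda\in(0,\infty)}\norm{S^\lambda}{B([0,\infty);L(E))}\leq\norm{S}{B([0,\infty);L(E))}$.

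For the convergence in \ref{item:YosidaApprox2} I would first record the standard fact that $\lambda R_\lambda x\to x$ in $E$ as $\lambda\to\infty$ for every $x\in E$: for $y\in\dom(A)$ one has $\lambda R_\lambda y - y = R_\lambda Ay$ with $\norm{R_\lambda}{L(E)}\leq M/\lambda\to0$, and the general case follows from density of $\dom(A)$ together with $\norm{\lambda R_\lambda}{L(E)}\leq M$; consequently $A_\lambda y = \lambda R_\lambda(Ay)\to Ay$ for $y\in\dom(A)$. Since $A_\lambda$ is a polynomial in $R_\lambda$ it commutes with $S_r$ and with $S^\lambda_s$, and $A$ commutes with both on $\dom(A)$; hence for $x\in\dom(A)$, differentiating $s\mapsto S^\lambda_sS_{t-s}x$ and integrating from $0$ to $t$ gives
\[
S^\lambda_t x - S_t x = \int_0^t S^\lambda_s(A_\lambda - A)S_{t-s}x\d s = \int_0^t S^\lambda_s(\lambda R_\lambda - \Id_E)S_{t-s}(Ax)\d s,
\]
so by the bound of the previous paragraph, $\sup_{t\in[0,T]}\norm{S^\lambda_tx - S_tx}{E}\leq MT\sup_{r\in[0,T]}\norm{(\lambda R_\lambda - \Id_E)S_r(Ax)}{E}$, which tends to $0$ because $\lambda R_\lambda\to\Id_E$ strongly, hence uniformly on the compact set $\{S_r(Ax):r\in[0,T]\}$. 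Finally, for arbitrary $x\in E$ I would approximate $x$ by elements of the dense subspace $\dom(A)$ and pass to the limit using $\norm{S^\lambda_t}{L(E)}\leq M$ and $\norm{S_t}{L(E)}\leq M$, obtaining $\lim_{\lambda\to\infty}\sup_{t\in[0,T]}\norm{S_tx - S^\lambda_tx}{E} = 0$ for every $x\in E$.

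The computation is entirely routine; the only two points requiring any care are the commutation bookkeeping that produces the integral formula for $S^\lambda_tx - S_tx$ (one must keep track of where $\dom(A)$ is needed and that $R_\lambda$, $S^\lambda_s$, $S_r$ mutually commute) and the upgrade from strong to uniform convergence of $\lambda R_\lambda$ on the compact orbit $\{S_r(Ax):r\in[0,T]\}$. Alternatively, the entire lemma can be quoted from standard references on $C_0$-semigroups; the short self-contained proof is included only to make the bound in \ref{item:YosidaApprox2} explicit.
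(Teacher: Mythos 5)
Your proof is correct, and for part \ref{item:YosidaApprox1} and the uniform bound in \ref{item:YosidaApprox2} it is essentially identical to the paper's: the paper obtains the resolvent estimate $\norm{R_\lambda(A)^k}{L(E)}\leq \norm{S}{B([0,\infty);L(E))}\lambda^{-k}$ from the Feller--Miyadera--Phillips theorem and then runs exactly your power-series computation $\norm{S^\lambda_t}{L(E)}=e^{-\lambda t}\norm{e^{t\lambda^2 R_\lambda(A)}}{L(E)}\leq M$, whereas you re-derive that estimate from the Laplace-transform representation of the resolvent. The only genuine divergence is in the convergence statement: the paper simply notes $A_\lambda x\to Ax$ on $\dom(A)$ and cites the Trotter--Kato approximation theorem, while you unpack that citation into the classical Yosida-approximation argument --- the commutator integral identity $S^\lambda_t x-S_tx=\int_0^t S^\lambda_s(\lambda R_\lambda-\Id_E)S_{t-s}(Ax)\d s$ for $x\in\dom(A)$, uniform convergence of $\lambda R_\lambda\to\Id_E$ on the compact orbit, and a density argument. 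Your route is longer but self-contained and makes the constant and the mechanism of convergence explicit; the paper's route is shorter at the cost of an external reference. All the delicate points (where $\dom(A)$ is needed, the mutual commutativity of $R_\lambda$, $S^\lambda_s$, $S_r$, and the $2M\norm{x-y}{E}$ density step) are handled correctly.
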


\begin{proof}
\ref{item:YosidaApprox1}: As $S$ is strongly continuous and of negative exponential type, $S$ belongs to $B([0,\infty);L(E))$. By the Feller--Miyadera--Phillips Theorem the interval $(0,\infty)$ is contained in the resolvent set of $A$. Thus, the resolvent $R_\lambda(A):=(\lambda\Id_E-A)^{-1} \in L(E)$ and $A_\lambda \in L(E)$ are well-defined. 

\ref{item:YosidaApprox2}: The Feller--Miyadera--Phillips Theorem comes with the resolvent estimate
\begin{equation*}
\norm{R_{\lambda}(A)^k}{L(E)} \leq
\frac{\norm{S}{B([0,\infty);L(E))}}{\lambda^k},\qquad \forall\,k\in \N,\;\lambda >0,
\end{equation*}
which implies for each $\lambda\in(0,\infty)$ and $t \in [0,\infty)$ that
\begin{align*}
\norm{S_t^\lambda}{L(E)} 
&= 
e^{-\lambda t} \norm{e^{t\lambda^2 R_\lambda(A)}}{L(E)} 
\leq 
e^{-\lambda t} \sum_{k=0}^\infty \frac{t^k\lambda^{2k}}{k!} \norm{R_\lambda(A)^{k}}{L(E)} 
\\&\leq 
e^{-\lambda t} \sum_{k=0}^\infty \frac{t^k\lambda^{2k}}{k!} 
\frac{\norm{S}{B([0,\infty);L(E))}}{\lambda^k}
= 
\norm{S}{B([0,\infty);L(E))}.
\end{align*}  
To see the second statement, note that $\lim_{\lambda\to\infty} A_\lambda x=Ax$, for all $x\in \dom(A)$. 
The Trotter--Kato approximation theorem then implies $\lim_{\lambda\to\infty}S^\lambda_tx=S_tx$ locally uniformly in $t$; 
see e.g.\@ \cite[Lemma II.3.4.(ii) and Theorem III.4.8]{engel1999one}
for details.
\end{proof}

\subsection{Weak error}
We are now ready to present and prove our main technical result, which is an upper bound on the weak error under perturbations of the noise coefficient. 
The proof builds on the results of Sections~\ref{sec:existence}, \ref{sec:kolmogorov},  and~\ref{sec:yosida}.

\begin{proposition}\label{prop:weak_error_strong}
Assume that the setting of Section~\ref{sec:setting} holds true, and 
additionally, 
let $V$ be a separable $\RR$-Hilbert space, which is densely and continuously embedded in $H$, 
let $F \in C^2_b(H)$, 
let $B,\smashedtilde B \in C^2_b(H;\HS(U;H))$, 
let $S|_V\colon[0,\infty)\to L(V)$ be a uniformly bounded and strongly continuous semigroup,
let $F|_V\in \Lip(V)$, 
let $B|_V, \smashedtilde B|_V \in \Lip(V;\HS(U;V))$, 
let $\xi \in \cL^2(\Omega;V)$ be $\mathcal F_0\slash\borel{V}$-measurable, 
and let $X,\smashedtilde X\colon[0,T]\times\Omega\to H$ be predictable processes which satisfy for each $t \in [0,T]$ that
$\P{\int_0^T \left( \norm{X_t}{H}^2 + \norm{\smashedtilde X_t}{H}^2 \right) \d t < \infty} =1$
and 
\begin{align*}
[X_t]_{\PP,\borel{H}} 
&= 
\left[ S_t \xi + \int_0^t S_{t-s}F(X_s)\d s \right]_{\PP,\borel{H}} 
+ \int_0^t S_{t-s}B(X_s)\d W_s,
\\
[\smashedtilde X_t]_{\PP,\borel{H}} 
&= 
\left[ S_t \xi + \int_0^t S_{t-s}F(\smashedtilde X_s)\d s \right]_{\PP,\borel{H}} 
+ \int_0^t S_{t-s}\smashedtilde B(\smashedtilde X_s)\d W_s.
\end{align*}
Then
\begin{equation*}
\abs{\EE\big[\phi\left(X_T\right)-\phi(\smashedtilde X_T)\big]}
\leq 
C \norm{\phi}{C_b^2(H;\R)}\!\sup_{x \in V}\!\frac{\sum_{b \in \UU}\|B(x)b+\smashedtilde B(x)b\|_H \|B(x)b-\smashedtilde B(x)b\|_H }{1+\norm{x}{V}^2}\!,\!
\end{equation*}
where $C =\frac{T}{2}C_1\left(1+C_2^2\right)$ and
\begin{align*}
C_1 
&= 
\exp\left(T \left(\frac72 \norm{F}{C_b^2(H)}+
  4 \norm{ B }{ C_{ {b} }^1( H; \HS( U;H ) )
  }^2 \right)\norm{S}{B([0,\infty);L(H))}^4\right)
\\&\qquad\times
\sqrt{T}\norm{S}{B([0,\infty);L(H))}^3\sqrt{\norm{F}{C^2_b(H)} +
2\norm{B}{C^2_b(H;\HS(U;H))}^2}
\\&\qquad 
+\norm{S}{B([0,\infty);L(H))}^2
\exp\left(T \left( 2\norm{ F }{ C_{ {b} }^1( H) } +
\norm{ B }{ C_{ {b} }^1( H; \HS( U;H ) )
}^2 \right)
\norm{S}{B([0,\infty);L(H))}^2\right)
\\
C_2 
&=
\norm{S}{B([0,\infty), L(V))} \left(\norm{\xi}{L^2(\Omega,V)} +
\sqrt{2T}\norm{F}{\Lip(V)} +
\sqrt{2T}\norm{B}{\Lip(;
\HS(U;V)}\right)
\\&\qquad 
\times \exp\left(T\norm{S}{B([0,\infty); L(V))}^2 \left(\frac12
+ \norm{F}{\Lip(V)}^2 + \norm{B}{\Lip(V;\HS(U;V))}^2\right)\right).
\end{align*}
\end{proposition}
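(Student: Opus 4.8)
The plan is to reduce to the case of a bounded generator via Yosida approximations, express the weak error through the solution of Kolmogorov's backward equation, and then estimate the resulting error representation term by term using the a-priori bounds of Lemma~\ref{lem:apriori} and the uniform Kolmogorov bounds of Lemma~\ref{lem:uNuniform}. Concretely, I would first apply Lemma~\ref{lem:YosidaApprox} to replace $S$ by its Yosida approximations $S^\lambda$, which have bounded generators $A_\lambda \in L(H)$ and satisfy the uniform bound $\norm{S^\lambda}{B([0,\infty);L(H))} \le \norm{S}{B([0,\infty);L(H))}$ together with strong convergence $S^\lambda_t x \to S_t x$ locally uniformly. By Lemma~\ref{lem:apriori}.\ref{item:apriori3} the corresponding solutions $X^\lambda, \smashedtilde X^\lambda$ converge to $X, \smashedtilde X$ in $B([0,T];L^2(\Omega;H))$, so $\E{\phi(X^\lambda_T)} \to \E{\phi(X_T)}$ and likewise for $\smashedtilde X$; thus it suffices to prove the bound with constants uniform in $\lambda$ for the equations with generator $A_\lambda$. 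The same Yosida argument applied on $V$ (using that $S|_V$ restricts to a uniformly bounded strongly continuous semigroup and the assumptions on $F|_V$, $B|_V$) shows that the a-priori bound $\norm{X^\lambda}{B([0,T];L^2(\Omega;V))} \le C_2$ holds uniformly in $\lambda$, which is exactly the constant $C_2$ appearing in the statement.

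Next, with a bounded generator in force, Lemma~\ref{lem:uNuniform} applies to the function $u^\lambda(t,x) = \E{\phi(X^{\lambda,x}_{T-t})}$: it is of class $C^{1,2}$, solves the backward Kolmogorov equation, and satisfies the uniform derivative bounds $\sup_{t,x}\norm{\ddx u^\lambda(t,x)}{L(H;\R)} \le \norm{\phi}{C^1_b} C_3$ and $\sup_{t,x}\mednorm{\ddxx u^\lambda(t,x)}{L^{(2)}(H;\R)} \le \norm{\phi}{C^2_b}(C_3^2 + C_4)$, with $C_3, C_4$ as in that lemma and — crucially — independent of $\lambda$ because they depend on $S$ only through $\norm{S}{B([0,\infty);L(H))}$, which dominates $\norm{S^\lambda}{B}$. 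I would then write the weak error as a telescoping integral: using that $t \mapsto \E{u^\lambda(t, \smashedtilde X^\lambda_t)}$ interpolates between $\E{\phi(\smashedtilde X^\lambda_T)}$ at $t=T$ and $u^\lambda(0,\xi)$'s expectation, and similarly for $X^\lambda$, apply Itô's formula to $u^\lambda(t, \cdot)$ along $\smashedtilde X^\lambda$. Because $u^\lambda$ solves the Kolmogorov equation associated to the $B$-dynamics (the dynamics of $X^\lambda$), the drift and first-order terms cancel against $-\ddt u^\lambda$, and one is left with the second-order discrepancy
\begin{equation*}
\E{\phi(X^\lambda_T)} - \E{\phi(\smashedtilde X^\lambda_T)}
= \frac12 \int_0^T \E{\sum_{b\in\UU} (\ddxx u^\lambda)(t,\smashedtilde X^\lambda_t)\big(B(\smashedtilde X^\lambda_t)b,B(\smashedtilde X^\lambda_t)b\big) - (\ddxx u^\lambda)(t,\smashedtilde X^\lambda_t)\big(\smashedtilde B(\smashedtilde X^\lambda_t)b,\smashedtilde B(\smashedtilde X^\lambda_t)b\big)} \d t.
\end{equation*}
The true-martingale property of the stochastic integral that makes this Itô expansion rigorous follows exactly as in the proof of Lemma~\ref{lem:uNuniform}.\ref{item:kolmogorov2}, using the uniform bound on $\ddx u^\lambda$ and the a-priori $L^2$ bound on $\smashedtilde X^\lambda$.

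Finally I would estimate the integrand pointwise. Writing the bilinear difference as $(\ddxx u^\lambda)(t,y)(Bb, Bb) - (\ddxx u^\lambda)(t,y)(\smashedtilde Bb, \smashedtilde Bb) = (\ddxx u^\lambda)(t,y)\big((B-\smashedtilde B)b, (B+\smashedtilde B)b\big)$ by bilinearity and symmetry, bounding it by $\mednorm{\ddxx u^\lambda(t,y)}{L^{(2)}(H;\R)}\norm{(B-\smashedtilde B)(y)b}{H}\norm{(B+\smashedtilde B)(y)b}{H}$, summing over $b\in\UU$, and inserting a factor $(1+\norm{y}{V}^2)$ over itself, the sum is controlled by $\norm{\phi}{C^2_b}(C_3^2+C_4)\,\big(1+\norm{\smashedtilde X^\lambda_t}{V}^2\big)\sup_{x\in V}\frac{\sum_{b\in\UU}\norm{B(x)b+\smashedtilde B(x)b}{H}\norm{B(x)b-\smashedtilde B(x)b}{H}}{1+\norm{x}{V}^2}$. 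Taking expectations, using $\sup_{t}\E{\norm{\smashedtilde X^\lambda_t}{V}^2} \le C_2^2$ uniformly in $\lambda$, and integrating over $[0,T]$ yields the claimed bound with $C = \frac{T}{2}(C_3^2+C_4)(1+C_2^2)$; one checks that $C_1 = C_3^2 + C_4$ by comparing the explicit expressions (the $\sqrt{T}\norm{S}{}^3$ term is $C_4$ and the $\norm{S}{}^2\exp(\cdots)$ term is $C_3^2$). Passing $\lambda \to \infty$ completes the proof. The main obstacle is the bookkeeping needed to ensure every constant is genuinely uniform in the Yosida parameter $\lambda$ — in particular that the $V$-valued a-priori bound survives the approximation and that the martingale property justifying Itô's formula holds uniformly — rather than any single hard estimate.
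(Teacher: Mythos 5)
Your proposal is correct and follows essentially the same route as the paper's proof: reduction to bounded generators via Yosida approximation (the paper merely presents the two steps in the opposite order), the Kolmogorov-equation representation of the weak error with cancellation of drift and first-order terms, the polarization identity turning the second-order discrepancy into $(\ddxx u)((B+\smashedtilde B)b,(B-\smashedtilde B)b)$, and the insertion of $1+\norm{\cdot}{V}^2$ to invoke the $V$-valued a-priori bound. Your identification $C_1=C_3^2+C_4$ and the observation that all constants depend on $S^\lambda$ only through $\norm{S}{B([0,\infty);L(H))}$ match the paper exactly.
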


\begin{proof}
We prove the statement in two steps. 

\emph{Step 1.} We assume temporarily that $S\colon[0,\infty)\to L(H)$ and $S|_V\colon[0,\infty)\to L(V)$ are uniformly continuous. The generator of $S$ is denoted by $A \in L(H)$. Let $u \in C^{1,2}([0,T]\times H;\R)$ be defined as in Lemma~\ref{lem:uNuniform}. Then
\begin{equation*}
\E{\phi(\smashedtilde X_T)-\phi(X_T)}
=
\E{u(T,\smashedtilde X_T)-u\left(0,\smashedtilde X_0\right)}.
\end{equation*}
As $A$ is bounded, $\smashedtilde X$ can be written in strong form as
\begin{equation*}
[\smashedtilde X_t]_{\PP,\borel{H}} = \left[\smashedtilde X_0 + \int_0^t (A
\smashedtilde X_s+F(\smashedtilde X_s))\d s \right]_{\PP,\borel{H}} + \int_0^t \smashedtilde B(\smashedtilde X_s) \d W_s,
\end{equation*}
and we get by It\^o's formula and the Kolmogorov equation that
\begin{align*}
\hspace{2em}&\hspace{-2em}
u(T,\smashedtilde X_T) - u(0,\smashedtilde X_0)
\\&
= \int_0^T (\ddt u)(t,\smashedtilde X_t) \d t+ \int_0^T (\ddx u)(t,\smashedtilde X_t) \d
\smashedtilde X_t + \frac12 \int_0^T (\ddxx u)(t,\smashedtilde X_t) \d\, [\smashedtilde X,\smashedtilde X]_t
\\&= 
\int_0^T (\ddx u)(t,\smashedtilde X_t) \smashedtilde B(\smashedtilde X_t) \d W_t+ \frac12 \int_0^T \sum_{b \in \UU}(\ddxx u)(t,\smashedtilde X_t) (\smashedtilde B(\smashedtilde X_t)b,\smashedtilde B(\smashedtilde X_t)b)\d t 
\\&\qquad 
- \frac12  \int_0^T \sum_{b \in \UU}(\ddxx u)(t,\smashedtilde X_t) (B(\smashedtilde X_t)b,B(\smashedtilde X_t)b)\d t 
\\&= 
\int_0^T (\ddx u)(t,\smashedtilde X_t) \smashedtilde B(\smashedtilde X_t) \d W_t 
\\&\qquad 
+ \frac12 \int_0^T \sum_{b \in \UU}(\ddxx u)(t,\smashedtilde X_t) \big(B(\smashedtilde X_t)b+\smashedtilde B(\smashedtilde X_t)b,B(\smashedtilde X_t)b-\smashedtilde B(\smashedtilde X_t)b\big)\d t.
\end{align*}
The stochastic integral on the right-hand side above is a martingale thanks to Lemma~\ref{lem:apriori}.\ref{item:apriori2}, Lemma~\ref{lem:uNuniform}, and the estimate 
\begin{align*}
\hspace{2em}&\hspace{-2em}
\E{\int_0^T \norm{(\ddx u)(t,\smashedtilde X_t) \smashedtilde B(\smashedtilde X_t)}{\HS(U;H)}^2 \d t}
\\&\leq
T \sup_{t\in[0,T]} \E{\norm{(\ddx u)(t,\smashedtilde X_t)}{L(H;\R)}^2 \norm{\smashedtilde B(\smashedtilde X_t)}{\HS(U;H)}^2 }
\\&\leq
T \norm{\ddx u}{C_b([0,T]\times H;\R)}^2 \norm{\smashedtilde B}{C^1_b(H;\HS(U;H))}^2 \sup_{t\in[0,T]} \E{\left(1+\norm{\smashedtilde X_t}{H}\right)^2} < \infty.
\end{align*}
Discarding the martingale part and using that $\smashedtilde X$ takes values in $V$ by Lemma~\ref{lem:apriori}, we can estimate similarly
\begin{align*}
&
\abs{\E{u(T,\smashedtilde X_T) - u(0,\smashedtilde X_0)}}
\\&\leq
\frac{T}{2} \sup_{t\in[0,T]} \E{ \norm{(\ddxx u)(t,\smashedtilde X_t)}{L^{(2)}(H;\R)} \sum_{b \in \UU} \norm{B(\smashedtilde X_t)b+\smashedtilde B(\smashedtilde X_t)b}{H} \norm{B(\smashedtilde X_t)b-\smashedtilde B(\smashedtilde X_t)b}{H} }
\\&\leq
\frac{T}{2} \norm{\ddxx u}{C_b([0,T]\times H;\R)} \left(1+\norm{\smashedtilde X}{B([0,T];L^2(\Omega;V))}^2\right) 
\\&\qquad
\times\sup_{x \in V}\frac{\sum_{b \in \UU}\norm{B(x)b+\smashedtilde B(x)b}{H}\norm{B(x)b-\smashedtilde B(x)b}{H}}{1+\norm{x}{V}^2}.
\end{align*}
Plugging in the estimates for $\smashedtilde X$ and $u$ of Lemmas~\ref{lem:apriori} and \ref{lem:uNuniform} shows the statement of the lemma in the special case where $S\colon[0,\infty)\to L(H)$ and $S|_V\colon[0,\infty)\to L(V)$ are uniformly continuous semigroups. 

\emph{Step 2.} We next show the lemma in the general case where $S\colon[0,\infty)\to L(H)$ and $S|_V\colon[0,\infty)\to L(V)$ are strongly continuous semigroups. 
Let $S^\lambda$ and $S^\lambda|_V$ be the Yosida approximations of $S$ and $S|_V$ constructed in Lemma~\ref{lem:YosidaApprox}, 
and let $(X^\lambda,\smashedtilde X^\lambda,u^\lambda,C^\lambda)$ be defined as $(X,\smashedtilde X,u,C)$ with $S$ replaced by $S^\lambda$ and $A$ replaced by $A_\lambda$. 
Then $\lim_{\lambda\to \infty}\norm{X^\lambda_T-X_T}{L^2(\Omega;H)}=0$ by Lemma~\ref{lem:YosidaApprox}.\ref{item:YosidaApprox2} and Lemma~\ref{lem:apriori}.\ref{item:apriori3}. 
Thus, it follows from the Lipschitz continuity of $\phi\colon H\to \R$ that
\begin{align*}
\E{\phi\left(X_T\right)-\phi\left(\smashedtilde X_T\right)}
&= 
\lim_{\lambda\to \infty} \E{\phi\left(X^\lambda_T\right)-\phi\left(\smashedtilde X^\lambda_T\right)}.
\end{align*}
Therefore, using the result of Step 1, 
\begin{multline*}
\abs{\E{\phi\left(X_T\right)-\phi\left(\smashedtilde X_T\right)}}
\leq 
\limsup_{\lambda\to \infty} C^\lambda \norm{\phi}{C_b^2(H;\R)}\sup_{x \in V}\frac{\sum_{b \in \UU}\norm{B(x)b+\smashedtilde B(x)b}{H}\norm{B(x)b-\smashedtilde B(x)b}{H}\!}{1+\norm{x}{V}^2}.
\end{multline*}
It follows from Lemma~\ref{lem:YosidaApprox} that $\limsup_{\lambda\to\infty} C^\lambda \leq C$, which proves the lemma. 
\end{proof}

\section{Examples}\label{sec:examples}
The examples in this section demonstrate that Theorem~\ref{thm:weak_errorG} is applicable to a wide variety of semilinear stochastic evolution equations. 
Beyond the examples treated below, it can also be applied to the stochastic heat equation with spatially colored noise. 
The convergence rate can be obtained as for the stochastic Schr\"odinger or linearized Korteweg--de-Vries equation below. 
However, the stochastic heat equation with space-time white noise requires different techniques which hinge on the analyticity of the heat semigroup. 

\subsection{Stochastic wave equation}

For the stochastic wave equation with additive noise, weak error rates of temporal discretizations were studied in \cite{hausenblas2010weak, kovacs2013weak} and of spatial discretizations in \cite{cox2017weak, kovacs2012weak, kovacs2015weak}. 
For the stochastic wave equation with multiplicative noise, weak error rates of temporal discretizations were studied in \cite{cox2017weak} and of spatial discretizations in \cite{jacobe2015weak}. 
See \autoref{tab:rates} for a summary of the obtained convergence rates.
We now complement these results by providing an essentially sharp weak convergence rate for noise discretizations of the stochastic wave equation with additive and multiplicative noise.

\begin{proposition}\label{prop:wave}
Let $\theta\in (0,\infty)$, 
let $\epsilon\in (0,1)$, 
let $\rho = (1-\epsilon)/4$, 
let $\sigma\in(1/4,\infty)$, 
let $H$ be the $\R$-Hilbert space $L^2((0,1);\R)$, 
let $\Delta\colon H^2((0,1))\cap H^1_0((0,1))\subset H\to H$ be the Laplace operator with Dirichlet boundary conditions on $H$, 
let $A= \theta \Delta$, 
let $(H_r)_{r \in \R}$ be a family of interpolation spaces
associated to $-A$, 
let one of the following two statements hold true, 
\begin{enumerate}[label=(\alph*)]
\item\label{item:wavea} 
$\eta=0$, $f_0 \in H_{\rho- \frac12}$, $f_1 \in L^{\infty}((0,1))$, and $f\colon (0,1)\times\R\ni(s,x)\mapsto f_0(s)+f_1(s) x\in\R$,

\item\label{item:waveb} 
$\eta \in (0,\rho) \cap (0,1/4-\rho)$ and $f \in C^{0,2}_b([0,1]\times\R;\R)$
\end{enumerate}
let $b_0\in H^{2\rho}((0,1))$, 
let $b_1 \in H^{2\sigma}((0,1))$,
let $\mathbf{H} = H_\eta \times H_{\eta-1/2}$, 
let $\mathbf{A}\colon D(\mathbf{A})\subset \mathbf{H} \to \mathbf{H}$ be the
linear operator which satisfies $D(\mathbf{A})=H_{\eta+1/2}\times H_\eta$ and $[\forall (x_1,x_2) \in D(\mathbf{A}):\mathbf{A}(x_1,x_2)=(x_2,Ax_1)]$, 
let $\mathbf{V}=H_\rho \times H_{\rho-1/2}$, 
let $U=H$, 
let $(\Omega,\mathcal F,(\mathcal F_t)_{t\in [0,T]},\mathbb P)$ be a stochastic basis, 
let $(W_t)_{t\in [0,T]}$ be an $\Id_U$-cylindrical $(\F_t)_{t\in [0,T]}$-Wiener process, 
let $\boldsymbol \xi \in \cL^2(\Omega;\mathbf V)$ be $\F_0\slash
\borel{\mathbf V}$-measurable,
for each $k \in \N$ let $e_k =(\sqrt{2}\sin(k\pi s))_{s \in (0,1)} \in U$, 
and for each $n \in \N\cup\{\infty\}$ let $P_n\in L(U)$ be the orthogonal projection onto the closure of $\operatorname{span}_{\R}\{e_k: k \in \N \cap [0,n)\}$. 
Then the following statements hold:
\begin{enumerate}[label=(\roman*)]
\item\label{item:wave1} $\mathbf{A}$ is the generator of a strongly continuous semigroup $\mathbf{S}\colon[0,\infty)\to L(\mathbf{H})$, which restricts to a strongly continuous semigroup $\mathbf{S}|_\mathbf{V}\colon [0,\infty)\to L(\mathbf{V})$.

\item\label{item:wave2} There are unique mappings $\mathbf{F}\in C^2_b(\mathbf{H})$ and $\mathbf{B} \in C^2_b(\mathbf{H};\HS(U;\mathbf{H}))$ which satisfy for all $(x_1,x_2) \in \mathbf{H}$, $u \in U$, and $s \in [0,1]$ that 
\begin{equation*}
\mathbf{F}(x_1,x_2)(s) = (0,f(s,x_1(s))), \qquad (\mathbf{B}(x_1,x_2)u)(s) = (0,b_0(s) + b_1(s) x_1(s)u(s)).
\end{equation*}
The mappings $\mathbf{F}$ and $\mathbf{B}$ restrict to $\mathbf{F}|_{\mathbf{V}} \in \Lip(\mathbf{V})$ and $\mathbf{B}|_{\mathbf{V}} \in \Lip(\mathbf{V};\HS(U;\mathbf{V}))$.

\item\label{item:wave3} For each $n \in \N \cup \{\infty\}$ there is an up to modifications unique predictable process $\mathbf X^n\colon [0,T]\times \Omega\to \mathbf{H}$ which satisfies that $\P{\int_0^T \norm{\mathbf X^n_t}{\mathbf{H}}^2\d t<\infty} = 1$ and for each $t \in [0,T]$,
\begin{align*}
[\mathbf X^n_t]_{\PP,\borel{\mathbf{H}}} &= \left[\mathbf{S}_t\boldsymbol \xi + \int_0^t \mathbf{S}_{t-s} \mathbf{F}(\mathbf X^n_s) \d s\right]_{\PP,\borel{\mathbf{H}}} + \int_0^t \mathbf{S}_{t-s} \mathbf{B}(\mathbf X^n_s) P_n \d W_s.
\end{align*}

\item\label{item:wave4} There exists $C \in (0,\infty)$ such that for each $n \in \mathbb N$ and $\phi \in C^2_b(\mathbf{H};\R)\setminus\{0\}$, 
\begin{equation*}
\norm{X_T^\infty- X_T^n}{L^2(\Omega;\mathbf{H})}^2  +
\frac{\abs{ \E{\phi\left(X^\infty_T\right)} -
\E{\phi\left(X^n_T\right)}}}{\norm{\phi}{C^2_b(\mathbf{H};\R)}}
\leq C n^{1-\epsilon}.
\end{equation*}

\end{enumerate}
\end{proposition}

\begin{proof}
\ref{item:wave1}: 
We will prove in two steps that for any $\delta \in \R$ the linear operator 
\begin{equation*}
\mathbf A\colon H_{\delta+1/2} \times H_\delta \subset H_\delta \times H_{\delta-1/2} \to H_\delta \times H_{\delta-1/2}
\end{equation*}
satisfies $\mathbf A^*=-\mathbf A$. First, $\mathbf A^*$ is an extension of $-\mathbf A$ because for each $(v_1,w_1), (v_2,w_2) \in D(\mathbf A)$, 
\begin{align*}
\hspace{2em}&\hspace{-2em}
\langle \mathbf A(v_1,w_1),(v_2,w_2)\rangle_{H_\delta\times H_{\delta-1/2}}
=
\langle w_1,v_2\rangle_{H_\delta} + \langle A v_1,w_2\rangle_{H_{\delta-1/2}}
\\&=
\langle (-A)^{1/2}w_1,(-A)^{1/2}v_2\rangle_{H_{\delta-1/2}} + \langle (-A)^{-1/2}A v_1,(-A)^{-1/2}w_2\rangle_{H_\delta}
\\&=
\langle w_1,-A v_2\rangle_{H_{\delta-1/2}} - \langle v_1,w_2\rangle_{H_\delta}
=
-\langle (v_1,w_1),\mathbf A(v_2,w_2)\rangle_{H_\delta\times H_{\delta-1/2}}.
\end{align*}
Second, to see that $\mathbf A^*=-\mathbf A$ let $(v,w) \in D(\mathbf A^*)$. Then the following linear mapping is bounded:
\begin{equation*}
H_{\delta+1/2}\times H_\delta \subset H_\delta\times H_{\delta-1/2} \to \R, 
\qquad
(h,k) \mapsto \langle \mathbf A(h,k),(v,w)\rangle_{H_\delta\times H_{\delta-1/2}}.
\end{equation*}
Rewriting the last expression as
\begin{align*}
\langle \mathbf A(h,k),(v,w)\rangle_{H_\delta\times H_{\delta-1/2}}
&=
\langle k,v\rangle_{H_\delta} + \langle A h,w\rangle_{H_{\delta-1/2}} 
\\&=
\langle k,v\rangle_{H_\delta} + \langle -(-A)^{1/2} h,(-A)^{-1/2}w\rangle_{H_\delta} 
\end{align*}
and using that $(-\theta \Delta)^{1/2}\colon H_\delta\to H_{\delta-1/2}$ is an isometry shows that the following linear mappings are bounded,
\begin{align*}
H_\delta \subset H_{\delta-1/2} &\to \R, 
&
k &\mapsto \langle k,v\rangle_{H_\delta}, 
\\
H_\delta \subset H_{\delta-1/2} &\to \R, 
&
h &\mapsto \langle h,(-A)^{-1/2}w\rangle_{H_\delta}.
\end{align*}
By \cite[Lemma~3.10.(ii)]{jacobe2015weak} this implies that $v$ and $(-A)^{-1/2}w$ belong to $H_{\delta+1/2}$, which is equivalent to $(v,w) \in H_{\delta+1/2}\times H_\delta=D(\mathbf A)$. This proves that $\mathbf A^*=-\mathbf A$. It follows from a theorem of Stone \cite[Theorem~3.24]{engel1999one} that $\mathbf A$ generates a strongly continuous group of isometries on $H_\delta\times H_{\delta-1/2}$. As this holds true for $\delta=\eta$ and $\delta = \rho$, we have proven \ref{item:wave1}. 
We now show \ref{item:wave2}: For each $x \in H_\eta$ let $F(x)\colon[0,1]\to\R$ and $B(x)\colon[0,1]\to\R$ be the mappings which satisfy for each $s \in (0,1)$ that 
\begin{equation*}
F(x)(s) = f(s,x(s)), \qquad B(x)(s) = b_0(s) + b_1(s) x(s).
\end{equation*}
We claim that $\mathbf F \in C^2_b(\mathbf{H})$ and $\mathbf{F}|_{\mathbf{V}} \in \Lip(\mathbf{V})$. As $\mathbf{F}(x_1,x_2)= (0,F(x_1))$, it is sufficient to show that $F \in C^2_b(H_\eta;H_{\eta-1/2})$ and $F|_{H_\rho} \in \Lip(H_\rho;H_{\rho-1/2})$. This can be seen as follows under assumptions \ref{item:wavea} or \ref{item:waveb}:
\begin{itemize}
\item[\ref{item:wavea}] 
Recall that $\eta=0$. The function $f_0$ belongs to $H_{-1/2}\cap H_{\rho-1/2}=H_{\rho-1/2}$ by definition. Moreover, multiplication $x\mapsto f_1 x$ belongs to $L(H)$ and therefore also to $L(H;H_{-1/2})$ and $L(H_\rho;H_{\rho-1/2})$. This proves the claim in the case \ref{item:wavea}. 
\item[\ref{item:waveb}]
For each $\delta \in (0,1/4)$ we have $H_\delta=H^{2\delta}$ by Lemma~\ref{lem:interpolation}.\ref{item:interpolation1}, $F \in C^2_b(H_{\delta}; L^1((0,1)))$ by Lemma~\ref{lem:nemytskiismooth}, and $F \in C^2_b(H_{\delta}; H_{\delta-1/2})$ by Lemma~\ref{lem:interpolation}.\ref{item:interpolation2}. Choosing $\delta = \rho$ and $\delta=\eta$ proves the claim in the case \ref{item:waveb}.
\end{itemize}

We claim that $\mathbf{B} \in C^2_b(\mathbf{H};\HS(U;\mathbf{H}))$ and $\mathbf{B}|_{\mathbf{V}} \in \Lip(\mathbf{V};\HS(U;\mathbf{V}))$. The conditions on $b_0,b_1$ guarantee that $B \in C^2_b(H_\eta)$ and $B|_{H_\rho} \in \Lip(H_\rho)$ by Lemma~\ref{lem:interpolation}.\ref{item:interpolation1} and Lemma~\ref{lem:multiplication}.\ref{item:multiplication1}. 
As $\mathbf{B}(x_1,x_2)(u) = (0,B(x_1)u)$, it remains to show that the multiplication operator $M\colon x\mapsto (u\mapsto xu)$ belongs to $L(H_\eta;\HS(U;H_{\eta-1/2}))$ and $L(H_\rho;\HS(U;H_{\rho-1/2}))$. 
We will prove the more general statement that $M$ belongs to $L(H_\gamma;\HS(H_{-\gamma};H_\beta))$ for each $\gamma \in [0,1/4)$ and $\beta \in (-\infty,-1/4-\gamma)$. This can be seen as follows. 
By Lemma~\ref{lem:interpolation}.\ref{item:interpolation1} and Lemma~\ref{lem:multiplication}.\ref{item:multiplication2} the following is a finite constant for each $\gamma \in [0,1/4)$, 
\begin{align*}
C_\gamma:=\sup_{\substack{0\neq f\in W^{1,\infty}((0,1))\\0\neq u\in H_\gamma}} \frac{\norm{fu}{H_\gamma}}{\norm{u}{H_\gamma}\norm{f}{L^\infty((0,1))}^{1-2\gamma} \norm{f}{W^{1,\infty}((0,1))}^{2\gamma}}<\infty.
\end{align*} 
Therefore, it holds true for each $x \in H_\gamma$ that 
 \begin{equation*}
   \begin{aligned}
 \hspace{2em}&\hspace{-2em}
 \norm{M(x)}{\HS(H_{-\gamma};H_\beta)}^2
 =
 \sum_{n\in\N} \norm{(-A)^\gamma e_n}{H}^2 \norm{xe_n}{H_\beta}^2
 =
 \sum_{n\in\N} \norm{x (-A)^\gamma e_n}{H_\beta}^2
 =
 \sum_{n\in\N} \|(-A)^\beta (x (-A)^\gamma e_n)\|_{H}^2
 \\&=
 \sum_{n\in\N}\sum_{m \in \N} \abs{\ip{ (-A)^\beta (x (-A)^\gamma e_n)}{e_m}{H}}^2
 =
 \sum_{n\in\N}\sum_{m \in \N} \abs{\ip{ x (-A)^\gamma e_n}{(-A)^\beta e_m}{H}}^2
 \\&=
 \sum_{n\in\N}\sum_{m \in \N} \norm{(-A)^\beta e_m}{H}^2 \abs{\ip{ x (-A)^\gamma e_n}{e_m}{H}}^2
 =
 \sum_{n\in\N}\sum_{m \in \N} \norm{(-A)^\beta e_m}{H}^2 \abs{\ip{(-A)^\gamma e_n}{x e_m}{H}}^2
 \\&=
 \sum_{m \in \N} \norm{(-A)^\beta e_m}{H}^2 \norm{x e_m}{H_\gamma}^2
 \leq
 C_\gamma^2 \sum_{m \in \N} \norm{(-A)^\beta e_m}{H}^2 \norm{x}{H_\gamma}^2 \norm{e_m}{L^\infty((0,1))}^{2(1-2\gamma)} \norm{e_m}{W^{1,\infty}((0,1))}^{4\gamma}
 \\&=
 2 C_\gamma^2 \norm{x}{H_\gamma}^2 \sum_{m \in \N}
 (m\pi)^{4\beta}  (1+m)^{4\gamma} 
 <\infty.
 \end{aligned}
 \end{equation*}
This proves that $M \in L(H_\gamma;\HS(H_{-\gamma};H_\beta))$, and we have shown \ref{item:wave2}. \ref{item:wave3} follows from \ref{item:wave1} and \ref{item:wave2} by Lemma~\ref{lem:apriori}.\ref{item:apriori1}.
We now show \ref{item:wave4}: For each $n \in \N \cup \{\infty\}$ the conditions of Theorem~\ref{thm:weak_errorG} are satisfied thanks to \ref{item:wave1}, \ref{item:wave2}, and \ref{item:wave3}. The convergence rate provided by Theorem~\ref{thm:weak_errorG} can be estimated as follows. As $\eta<\rho-1/4$, we have $M \in L(H_\rho;\HS(H_{-\rho};H_{\eta-1/2}))$, as shown in the proof of \ref{item:wave2}. Therefore,
\begin{align*}
\hspace{2em}&\hspace{-2em}
\sup_{(x_1,x_2) \in \mathbf{V}} \frac{\sum_{k=n}^\infty \norm{\mathbf{B}(x_1,x_2)e_k}{\mathbf{H}}^2 }{1+\norm{(x_1,x_2)}{\mathbf{V}}^2 } 
\leq
n^{\epsilon-1} \sup_{(x_1,x_2) \in \mathbf{V}} \frac{\sum_{k\in\N} k^{1-\epsilon} \norm{\mathbf{B}(x_1,x_2)e_k}{\mathbf H}^2 }{1+\norm{(x_1,x_2)}{\mathbf V}^2 } 
\\&=
n^{\epsilon-1} \sup_{x \in H_\rho} \frac{\sum_{k\in\N} \norm{ e_k}{H_{\rho}}^2 \norm{B(x)e_k}{H_{-1/2}}^2 }{1+\norm{x}{H_\rho}^2 }
=
n^{\epsilon-1} \sup_{x \in H_\rho} \frac{\norm{M(B(x))}{\HS(H_{-\rho};H_{-1/2})}^2}{1+\norm{x}{H_\rho}^2 }
\\&\leq
n^{\epsilon-1} \norm{M}{L(H_\rho;\HS(H_{-\rho};H_{-1/2}))}^2 \norm{B}{\Lip(H_\rho)}^2<\infty.
\qedhere
\end{align*}
\end{proof}

\begin{remark}
Choosing $\mathbf V=\mathbf H$ in Proposition~\ref{prop:wave} leads to a worse convergence rate. 
In fact, this means $\eta =\rho$, and in this case the proof of Proposition~\ref{prop:wave}.\ref{item:wave2} requires $\rho<1/8$, which entails $\epsilon=1-4\rho>1/2$. Thus, one gets only a rate of $\approx 1/2$ with $\mathbf V = \mathbf H$ compared to the rate of $\approx 1$ with $\mathbf V \subsetneq \mathbf H$. 
\end{remark}

\subsection{HJMM-type equations}

HJMM-type equations are used to model the stochastic evolution of interest rates. 
Weak error rates of numerical discretizations of HJMM-type equations were studied in \cite{doersek2010semigroup, doersek2013efficient, krivko2013numerical}; see also the references therein. 
The following proposition provides an upper bound on the weak error of noise discretizations of HJMM equations with additive noise, i.e., of infinite-dimensional Ornstein--Uhlenbeck forward rate models.
The result does not generalize to multiplicative noise because this would lead to a quadratic term in the drift and to explosion of the solution in finite time \cite[Section~6.4.1]{filipovic2001consistency}.  
To ensure that the noise discretization preserves the HJMM-condition and thereby the absence of arbitrage, we discretize the drift together with the volatility. 

\begin{proposition}\label{prop:hjmm}
  Let $H =U$ be a separable $\R$-Hilbert space of real-valued functions on $[0,\infty)$, 
  let $(\Omega,\mathcal F,(\mathcal F_t)_{t\in [0,T]},\mathbb P)$ be a stochastic basis, 
  let $(W_t)_{t\in [0,T]}$ be an $\Id_U$-cylindrical $(\F_t)_{t\in [0,T]}$-Wiener process, 
  let $S\colon[0,\infty)\to L(H)$ be a strongly continuous semigroup 
  which satisfies for each $x \in H$ and $s,t \in [0,\infty)$ that $(S_t x)(s)=x(t+s)$, 
  let $H_0$ be a subspace of $H$ consisting of locally integrable functions, let $m\colon H_0\times H_0 \to H$ be a bounded bilinear mapping which satisfies for all $x,y \in H_0$ and $t \in [0,\infty)$ that
  \begin{equation*}
    m(x,y)(t) = x(t) \int_0^t y(s)\d s,
  \end{equation*}
  let $B \in \HS(U;H_0)$, let $(e_k)_{k\in \N}$ be an orthonormal basis of $U$, 
  for each $n \in \N\cup\{\infty\}$ let $P_n\in L(U)$ denote the orthogonal projection onto the closure of $\operatorname{span}_{\R}\{e_k: k \in \N \cap [0,n)\}$, and let $\xi \in \cL^2(\Omega;H)$ be $\F_0\slash \borel{H}$-measurable. Then the following statements hold:
  \begin{enumerate}[label=(\roman*)]
  \item\label{item:hjmm1}
    For each $n \in \N\cup\{\infty\}$ there is an up to modifications unique predictable process $X^n\colon[0,T]\times \Omega\to H$ which satisfies that $\P{\int_0^T \norm{X^n_t}{H}^2\d t<\infty} = 1$ and for each $t \in [0,T]$ 
    \begin{equation*}
      [X^n_t]_{\PP,\borel{H}} = \left[S_t\xi + \int_0^t S_{t-s} \trace\big(m(BP_n,BP_n)\big) \d s\right]_{\PP,\borel{H}} + \int_0^t S_{t-s} B P_n \d W_s.
    \end{equation*}
    
  \item\label{item:hjmm2}
    There exists $C \in \mathbb R$ such that for each $n \in \mathbb N$ and $\phi \in C^2_b(H;\R)\setminus\{0\}$, 
    \begin{equation*}
      \norm{X_T^\infty- X_T^n}{L^2(\Omega;H)}^2  +
      \frac{\abs{ \E{\phi\left(X^\infty_T\right)} -
          \E{\phi\left(X^n_T\right)}}}{\norm{\phi}{C^2_b(H;\R)}}
      \leq C \sum_{k=n}^\infty \norm{Be_k}{H}^2.
    \end{equation*}
  \end{enumerate}
\end{proposition}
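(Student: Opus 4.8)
The plan starts from the observation that the drift $\trace(m(BP_n,BP_n))$ appearing in part~\ref{item:hjmm1} does not depend on the state. Writing $\UU:=(e_k)_{k\in\N}$ and letting $C_m\in(0,\infty)$ denote the norm of the bounded bilinear map $m$, one has for each $n\in\N\cup\{\infty\}$ that
\begin{equation*}
g_n := \trace(m(BP_n,BP_n)) = \sum_{k\in\N\cap[0,n)} m(Be_k,Be_k)\in H,
\end{equation*}
where for $n=\infty$ the series converges absolutely in $H$ because $\norm{m(Be_k,Be_k)}{H}\le C_m\norm{Be_k}{H}^2$ and $\sum_{k\in\N}\norm{Be_k}{H}^2=\norm{B}{\HS(U;H)}^2<\infty$. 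Thus the equation is affine with additive noise. For part~\ref{item:hjmm1} I would first reduce, exactly as in the proof of Theorem~\ref{thm:weak_errorG}, to the case of a semigroup that is uniformly bounded on $[0,\infty)$, by adding and subtracting a multiple of the identity to the generator and to the drift; the shift semigroup need not itself be uniformly bounded, but it is strongly continuous, hence bounded on $[0,T]$ and of exponential type, so this reduction applies and turns the drift into the affine, hence Lipschitz, map $x\mapsto g_n+\lambda x$. Existence and uniqueness of $X^n$ then follow from Lemma~\ref{lem:apriori}.\ref{item:apriori1}, applied with this drift and with the constant, hence Lipschitz, volatility map $x\mapsto BP_n\in\HS(U;H)$.

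For the strong error in part~\ref{item:hjmm2} I would subtract the two mild equations: for each $t\in[0,T]$,
\begin{equation*}
[X^\infty_t-X^n_t]_{\PP,\borel{H}} = \Bigl[\int_0^t S_{t-s}(g_\infty-g_n)\d s\Bigr]_{\PP,\borel{H}} + \int_0^t S_{t-s}B(\Id_U-P_n)\d W_s.
\end{equation*}
Bounding the deterministic term by $T\norm{S}{B([0,T];L(H))}\norm{g_\infty-g_n}{H}\le T\norm{S}{B([0,T];L(H))}C_m\sum_{k=n}^\infty\norm{Be_k}{H}^2$ and, via It\^o's isometry, the stochastic term by $\sqrt{T}\,\norm{S}{B([0,T];L(H))}\bigl(\sum_{k=n}^\infty\norm{Be_k}{H}^2\bigr)^{1/2}$, then squaring and using that $\sum_{k=n}^\infty\norm{Be_k}{H}^2\le\norm{B}{\HS(U;H)}^2$ is bounded, so that its square is at most a constant times itself, gives $\norm{X^\infty_T-X^n_T}{L^2(\Omega;H)}^2\le C\sum_{k=n}^\infty\norm{Be_k}{H}^2$.

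For the weak error I would introduce the auxiliary mild solution $Y^n$ of the equation with the \emph{full} drift $g_\infty$ but the \emph{truncated} volatility $BP_n$, which exists and is unique by Lemma~\ref{lem:apriori}.\ref{item:apriori1}. Since $X^\infty$ and $Y^n$ share the drift $g_\infty$, Proposition~\ref{prop:weak_error_strong} applies --- after the same reduction to a uniformly bounded semigroup, with $V=H$, with the orthonormal basis $\UU=(e_k)_{k\in\N}$, with $F$ the affine drift, and with the \emph{state-independent} volatilities $B(\cdot)\equiv BP_\infty$ and $\smashedtilde{B}(\cdot)\equiv BP_n$. Because these do not depend on $x$, the supremum on its right-hand side collapses to $\sum_{b\in\UU}\norm{B(\Id_U+P_n)b}{H}\,\norm{B(\Id_U-P_n)b}{H}=\sum_{k=n}^\infty\norm{Be_k}{H}^2$, so that $\abs{\E{\phi(X^\infty_T)}-\E{\phi(Y^n_T)}}\le C\norm{\phi}{C^2_b(H;\R)}\sum_{k=n}^\infty\norm{Be_k}{H}^2$. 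On the other hand $Y^n_T-X^n_T=\int_0^T S_{T-s}(g_\infty-g_n)\d s$ is a \emph{deterministic} element of $H$ of norm at most $T\norm{S}{B([0,T];L(H))}C_m\sum_{k=n}^\infty\norm{Be_k}{H}^2$, and the mean value theorem gives $\abs{\E{\phi(Y^n_T)}-\E{\phi(X^n_T)}}\le\norm{\phi}{C^1_b(H;\R)}\norm{Y^n_T-X^n_T}{H}$, which is bounded by the same expression. Adding these two estimates and combining with the strong bound proves part~\ref{item:hjmm2}.

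The main obstacle is conceptual rather than computational: Proposition~\ref{prop:weak_error_strong} compares two equations with \emph{identical} drifts, whereas here the noise discretization deliberately perturbs the drift as well --- precisely so as to preserve the HJMM drift condition and hence the absence of arbitrage. The device that removes the obstacle is the intermediate process $Y^n$, together with the observation that the drift perturbation $g_\infty-g_n=\sum_{k=n}^\infty m(Be_k,Be_k)$ is a \emph{deterministic} element of $H$ whose norm is already controlled by $\sum_{k=n}^\infty\norm{Be_k}{H}^2$, using only the boundedness of $m$. After that, the remaining work is routine $C_0$-semigroup and It\^o-isometry bookkeeping, together with the elementary fact that the square of a uniformly bounded nonnegative quantity is dominated by a constant multiple of the quantity itself.
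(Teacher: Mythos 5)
Your proof is correct and follows essentially the same route as the paper's: the key device in both is the intermediate process $Y^n$ with the full drift $\trace(m(B,B))$ and truncated volatility $BP_n$, to which the abstract weak-error bound applies, combined with the observation that $Y^n_T-X^n_T$ is a deterministic element of $H$ whose norm is already of order $\sum_{k=n}^\infty\norm{Be_k}{H}^2$ by the boundedness of $m$. The only (immaterial) differences are that you bound the strong error by a direct It\^o-isometry computation and invoke Proposition~\ref{prop:weak_error_strong} directly, whereas the paper routes both through Theorem~\ref{thm:weak_errorG}.
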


\begin{proof}
\ref{item:hjmm1} follows from Lemma~\ref{lem:apriori}.\ref{item:apriori1}.

\ref{item:hjmm2}: By Lemma~\ref{lem:apriori}.\ref{item:apriori1} there exists for each $n \in \N \cup\{\infty\}$ an up to modifications unique predictable process $Y^n\colon[0,T]\times\Omega\to H$ which satisfies that $\P{\int_0^T \norm{Y^n_t}{H}^2\d t<\infty} = 1$ and for each $t \in [0,T]$,
\begin{align*}
[Y^n_t]_{\PP,\borel{H}} &= \left[S_t\xi + \int_0^t S_{t-s} \trace\big(m(B,B)\big) \d s\right]_{\PP,\borel{H}} + \int_0^t S_{t-s} B P_n \d W_s.
\end{align*}
As $X^\infty=Y^\infty$ one has for each $\phi \in C^2_b(H;\R)\setminus\{0\}$ and $n \in \N$ that
\begin{align*}
\hspace{2em}&\hspace{-2em}
\norm{X_T^\infty- X_T^n}{L^2(\Omega;H)}^2  +
\frac{\abs{ \E{\phi\left(X^\infty_T\right)} -
\E{\phi\left(X^n_T\right)}}}{\norm{\phi}{C^2_b(H;\R)}}
  \\&\leq 
2\norm{Y_T^\infty- Y_T^n}{L^2(\Omega;H)}^2 + \frac{ \abs{ \E{\phi\left(Y^\infty_T\right)} - \E{\phi\left(Y^n_T\right)}} }{\norm{\phi}{C^2_b(H;\R)}} 
\\&\qquad+
2\norm{Y_T^n- X_T^n}{L^2(\Omega;H)}^2
+ \frac{ \abs{ \E{\phi\left(Y^n_T\right)} - \E{\phi\left(X^n_T\right)}} }{\norm{\phi}{C^2_b(H;\R)}}
\\&\leq 
2\norm{Y_T^\infty- Y_T^n}{L^2(\Omega;H)}^2 + \frac{ \abs{ \E{\phi\left(Y^\infty_T\right)} - \E{\phi\left(Y^n_T\right)}} }{\norm{\phi}{C^2_b(H;\R)}} 
\\&\qquad+
2\norm{Y_T^n- X_T^n}{H}^2
+ \norm{Y_T^n- X_T^n}{H},
\end{align*}
where it was used in the last step that $\norm{\phi}{\Lip(H;\R)}\leq\norm{\phi}{C^2_b(H;\R)}$ and that $Y^n_T-X^n_T$ is deterministic. The first two summands on the right-hand side can be bounded using Theorem~\ref{thm:weak_errorG}, and the third and fourth summands can be bounded as follows:
\begin{align*}
\norm{Y^n_T-X^n_T}{H}
&=
\bignorm{\int_0^T S_{T-t}\Big(\trace\big(m(B,B)-\trace\big(m(BP_n,BP_n)\big)\big)\Big) \d t }{H}
\\&\leq
T \norm{S}{B([0,T];L(H))} \norm{\trace\big(m(B(P_\infty+P_n),B(P_\infty-P_n))\big)\big)}{H} 
\\&\leq
2 T \norm{S}{B([0,T];L(H))} \norm{m}{L^{(2)}(H_0;H)}\sum_{k=n}^\infty\norm{Be_k}{H}^2.
\qedhere
\end{align*}
\end{proof}

\subsection{Stochastic Schr\"odinger equations}
Weak error rates for temporal discretizations of Schr\"odinger's equation were established in \cite{debouard2006weak}.   
In the following we provide a convergence rate of the weak error under noise discretizations of Schr\"odinger's equation.  
The equation is formulated on a Hilbert space of complex-valued functions. 
Nevertheless, we do not require the coefficients of the equation to be complex differentiable because this would be overly restrictive, and real differentiability is sufficient for our purpose. 
Thus, we will treat all complex Hilbert spaces, including the field of complex numbers itself, as Hilbert spaces over the real numbers. 

\begin{proposition}\label{prop:schrodinger}
Let $\C_\R=\left\{(\begin{smallmatrix}a&-b\\b&a\end{smallmatrix})\in\mathbb R^{2\times 2}\colon a,b \in \mathbb R\right\}$ be the $\R$-Hilbert space with inner product given by $\ip{(\begin{smallmatrix}a&-b\\b&a\end{smallmatrix})}{(\begin{smallmatrix}c&-d\\d&c\end{smallmatrix})}{\C_\R}=ac+bd$ for all $(\begin{smallmatrix}a&-b\\b&a\end{smallmatrix}), (\begin{smallmatrix}c&-d\\d&c\end{smallmatrix}) \in \C_\R$,
let $i = (\begin{smallmatrix}0&-1\\1&0\end{smallmatrix})\in\C_\R$,
let $d \in \N$, 
let $\alpha \in (0,\infty)$, 
let $\epsilon \in (0,\infty)$,
let $r>d/2$, 
let $H=U=L^2(\R^d;\C_\R)$, 
let $\Delta\colon H^2(\R^d;\C_\R)\subset H\to H$ be the Laplace operator, acting componentwise,  
let $V=H^r(\R^d;\C_\R)$, 
let $f_0, f_1, b_0,b_1 \in V$, 
let $\Sigma \in \HS(U;V)$,
let $(\Omega,\mathcal F,(\mathcal F_t)_{t\in [0,T]},\mathbb P)$ be a stochastic basis, 
let $(W_t)_{t\in [0,T]}$ be an $\Id_U$-cylindrical $(\F_t)_{t\in [0,T]}$-Wiener process, 
let $(e_k)_{k\in\N}$ be an orthonormal basis of $U$, 
and for each $n\in\N\cup\{\infty\}$ let $P_n \in L(U)$ be the orthogonal projection onto the closure of the linear span of $\{e_k: k\in \N\cap [0,n)\}$,
and let $\xi \in \cL^2(\Omega;V)$ be $\F_0\slash\borel{V}$-measurable.
Then the following statements hold true.
\begin{enumerate}[label=(\roman*)]
\item\label{item:schrodinger1} The operator $-i\Delta$ generates a strongly continuous group of isometries $S\colon\R\to L(H)$, which restricts to a strongly continuous group of isometries $S|_V\colon\R\to L(V)$. 

\item\label{item:schrodinger2} There are unique mappings $F\in C^2_b(H;H)$ and $B \in C^2_b(H;\HS(U;H))$ which satisfy for all $u,v \in H$ and $x \in \R^d$ that 
\begin{equation*}
F(v)(x) = f_0(x)+f_1(x)v(x), \qquad B(v)(u)(x) = \big(b_0(x) + b_1(x) v(x)\big)(\Sigma u)(x).
\end{equation*}
The mappings $F$ and $B$ restrict to $F|_V \in \Lip(V;V)$ and $B|_V \in \Lip(V;\HS(U;V))$.

\item\label{item:schrodinger3} For each $n \in \N\cup\{\infty\}$ there is an up to modifications unique predictable process $X^n\colon[0,T]\times \Omega\to H$ which satisfies for each $t \in [0,T]$ that $\P{\int_0^T \norm{X^n_t}{H}^2 \d t<\infty} = 1$ and
\begin{align*}
[X^n_t]_{\PP,\borel{H}} &= \left[S_t\xi -i \int_0^t S_{t-s} F(X^n_s) \d s\right]_{\PP,\borel{H}} - i \int_0^t S_{t-s} B(X^n_s) P_n \d W_s.
\end{align*}

\item\label{item:schrodinger4} There exists $C \in \mathbb R$ such that for each $n \in \mathbb N$ and $\phi \in C^2_b(H;\R)\setminus\{0\}$, 
\begin{equation*}
\norm{X_T^\infty- X_T^n}{L^2(\Omega;H)}^2  +
\frac{\abs{ \E{\phi\left(X^\infty_T\right)} -
\E{\phi\left(X^n_T\right)}}}{\norm{\phi}{C^2_b(H;\R)}}
\leq C \sum_{k=n}^\infty \norm{\Sigma e_k}{H}^2.
\end{equation*}
\end{enumerate}
\end{proposition}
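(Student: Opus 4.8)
The plan is to apply Theorem~\ref{thm:weak_errorG} with $H,U,V$ as given, semigroup $S$, drift $-iF$, volatility $-iB$, initial condition $\xi$, orthonormal basis $(e_k)$, projections $(P_n)$ and solutions $(X^n)$, and to read the four claims off from it, the rate in \ref{item:schrodinger4} being deduced by estimating the right-hand side of \eqref{eq:7}. Since $\C_\R$ is treated throughout as a real Hilbert space, multiplication by the fixed element $-i\in\C_\R$ is an isometry of $H$ and of $V$ which commutes with $S$ and with Bessel potentials; hence it suffices to prove the regularity claims for $F$ and $B$, because $-iF$ and $-iB$ then carry the same $C^2_b$- and Lipschitz-norms, and the fraction in \eqref{eq:7} is unchanged under replacing $B$ by $-iB$.

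For \ref{item:schrodinger1}, I would argue that the componentwise Laplacian $\Delta$ is self-adjoint on $H$ while multiplication by $i$ is skew-adjoint on $\C_\R$ and commutes with $\Delta$, so that $-i\Delta$ is skew-adjoint; Stone's theorem then yields a strongly continuous group of isometries $S_t=e^{-it\Delta}$ on $H$. Since $S_t$ commutes with the Bessel potential $(1-\Delta)^{r/2}$, which is by definition of the $H^r$-norm an isometric isomorphism $V\to H$, the group $S$ restricts to a strongly continuous group of isometries on $V=H^r(\R^d;\C_\R)$.

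For \ref{item:schrodinger2}, note that $F$ and $B$ are affine in their argument, so $F'(v)w=f_1w$, $F''=0$, $(B'(v)w)(u)=(b_1w)(\Sigma u)$, $B''=0$ (all products pointwise in $\C_\R$), whence $F''$ and $B''$ are trivially bounded and continuous. Using the Sobolev embedding $H^r(\R^d)\hookrightarrow L^\infty(\R^d)$, valid since $r>d/2$, one gets $\|f_1w\|_{L^2}\le\|f_1\|_{L^\infty}\|w\|_{L^2}$, so $F\in C^2_b(H;H)$; writing $B(v)=M_{b_0+b_1v}\circ\Sigma$ with $M_g$ pointwise multiplication, and combining $\|gh\|_{L^2}\le\|g\|_{L^2}\|h\|_{L^\infty}$, the embedding $V\hookrightarrow L^\infty$, $\Sigma\in\HS(U;V)$, and the fact that $\HS$ is an operator ideal, one gets $B\in C^2_b(H;\HS(U;H))$. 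For the restrictions to $V$ one uses instead that $H^r(\R^d)$ is a multiplication algebra for $r>d/2$, so that $b_0+b_1v\in H^r$ whenever $v\in H^r$ and hence $M_{b_0+b_1v}\in L(H^r)$; this yields $F|_V\in\Lip(V;V)$ and $B|_V\in\Lip(V;\HS(U;V))$, both affine with the required bounds.

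Part \ref{item:schrodinger3} is then Lemma~\ref{lem:apriori}.\ref{item:apriori1}, applied with Lipschitz drift $-iF$ and Lipschitz volatility $-iBP_n$ (using $C^2_b\subseteq\Lip$ and that $P_n$ is a contraction) and with $\xi\in\cL^2(\Omega;H)$. For \ref{item:schrodinger4}, Theorem~\ref{thm:weak_errorG} applies by \ref{item:schrodinger1}--\ref{item:schrodinger3}; since $\|(-iB)(x)e_k\|_H=\|(b_0+b_1x)(\Sigma e_k)\|_{L^2}\le\|b_0+b_1x\|_{L^\infty}\|\Sigma e_k\|_H$ and $\|b_0+b_1x\|_{L^\infty}\le\|b_0\|_{L^\infty}+C\|b_1\|_{L^\infty}\|x\|_V$ by the Sobolev embedding, the quotient on the right-hand side of \eqref{eq:7} is bounded, uniformly in $x\in V$, by a constant multiple of $\sum_{k=n}^\infty\|\Sigma e_k\|_H^2$ (finite and vanishing as $n\to\infty$ since $\Sigma\in\HS(U;V)$ and $V\hookrightarrow H$), which is the stated rate. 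The argument is a verification rather than a new estimate, so there is no deep obstacle; the points requiring the most care are the real-Hilbert-space treatment of the complex structure in \ref{item:schrodinger1} --- ensuring $-i\Delta$ is genuinely skew-adjoint and that $S$ restricts isometrically to $V$ --- and, in \ref{item:schrodinger2}, the Hilbert--Schmidt bookkeeping for $B$ and its $V$-restriction, where one must combine the boundedness of pointwise multiplication (as a map $L^\infty\times L^2\to L^2$, resp. the multiplication-algebra property of $H^r$) with the ideal property of the Hilbert--Schmidt class.
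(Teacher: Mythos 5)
Your proposal is correct and follows essentially the same route as the paper: verify the hypotheses of Theorem~\ref{thm:weak_errorG} (with the isometric factor $-i$ absorbed into the coefficients), obtain \ref{item:schrodinger2} from the Sobolev embedding and the multiplication-algebra property of $H^r(\R^d)$ for $r>d/2$, get \ref{item:schrodinger3} from Lemma~\ref{lem:apriori}, and deduce \ref{item:schrodinger4} by bounding $\norm{B(x)e_k}{H}$ by $\norm{b_0+b_1x}{L^\infty(\R^d)}\norm{\Sigma e_k}{H}$. The only (immaterial) difference is in \ref{item:schrodinger1}, where the paper identifies $\C_\R$ with $\C$ and quotes the explicit Fourier-multiplier form of the Schr\"odinger group, whereas you argue abstractly via skew-adjointness of $-i\Delta$ and Stone's theorem; both are standard and valid.
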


\begin{proof}
  \ref{item:schrodinger1}: As the mapping $\C\ni \lambda \mapsto (\begin{smallmatrix}\Re \lambda &-\Im \lambda\\ \Im \lambda&\Re \lambda\end{smallmatrix}) \in \C_\R$ is an isometric isomorphism of $\R$-Hilbert spaces, the statement is equivalent to the following well-known fact: the linear operator\linebreak
$\sqrt{-1}\Delta \colon H^2(\R^d;\C) \subset L^2(\R^d;\C) \to L^2(\R^d;\C)$
generates the strongly continuous group of isometries $G\colon \R \to L(L^2(\R^d;\C))$ which satisfies for all $t \in \R$, $v \in L^2(\R^d;\C)$ and $\xi \in \R^d$ that
$\widehat{G_t v}(\xi) = \exp(\sqrt{-1} t \xi^2) \widehat{v}(\xi)$,
and $G|_{H^r(\R^d;\C)}\colon\R\to L(H^r(\R^d;\C))$ is a strongly continuous group of isometries. 

\ref{item:schrodinger2}: This follows from the fact that $V$ is a Banach algebra, and multiplication $V\times H\to H$ is bounded bilinear, as shown in Lemma~\ref{lem:multiplication}.\ref{item:multiplication1}. 
\ref{item:schrodinger3} follows from \ref{item:schrodinger1} and \ref{item:schrodinger2} by Lemma~\ref{lem:apriori}.
\ref{item:wave4} follows from Theorem~\ref{thm:weak_errorG} and the estimate
\begin{align*}
\sup_{x \in V} \frac{\sum_{k=n}^\infty \norm{B(x)e_k}{H}^2 }{1+\norm{x}{V}^2 }
&\leq
\sup_{x \in V} \frac{\norm{b_0+b_1 x}{V}^2}{1+\norm{x}{V}^2 } \sum_{k=n}^\infty \norm{\Sigma e_k}{H}^2, 
\end{align*}
noting that the first factor is finite because $V$ is a Banach algebra. 
\end{proof}

\begin{remark}\label{rmk:schroedinger}
Proposition~\ref{prop:schrodinger} remains valid if the condition $f_1,b_1 \in V$ is replaced by $f_1,b_1 \in \R$. In fact, all that is needed is that the mappings $x\mapsto f_1 x$ and $x\mapsto b_1 x$ belong to $L(H)$ and $L(V)$. 
Furthermore, nonlinear Nemytskii operators $F$ could be accommodated by increasing the Sobolev regularity of $H$, which however potentially lowers the convergence rate. 
\end{remark}

\subsection{Linearized stochastic Korteweg--de Vries equation}

Due to its non-linearity the Korteweg--de Vries equation is not
directly amenable to the current methods of numerical weak error
analysis, but its linearization, which is sometimes called Airy's equation, is. 
In the following we establish a weak convergence rate for the
discretization of additive and multiplicative noise.

\begin{proposition}\label{prop:kdv}
Let $\alpha \in (0,\infty)$, 
let $\epsilon \in (0,\infty)$, 
let $r>1/2$, 
let $H=U=L^2(\R)$, 
let $A\colon H^3(\R)\subset H\to H$ be the linear operator which satisfies for all $v \in H^3(\R)$ and $x \in \R$ that $Av(x)=-v'''(x)$, 
let $V=H^r(\R)$, 
let $f_0, f_1, b_0, b_1 \in V$, 
let $\Sigma \in \HS(U;V)$, 
let $(\Omega,\mathcal F,(\mathcal F_t)_{t\in [0,T]},\mathbb P)$ be a stochastic basis, 
let $(W_t)_{t\in [0,T]}$ be an $\Id_U$-cylindrical $(\F_t)_{t\in [0,T]}$-Wiener process, 
let $(e_k)_{k\in\N}$ be an orthonormal basis of $U$, 
for each $n\in\N\cup\{\infty\}$ let $P_n \in L(U)$ be the
orthogonal projection onto the closure of the linear span of
$\{e_k: k\in \N\cap [0,n)\}$, 
and let $\xi \in \cL^2(\Omega;V)$ be $\F_0\slash\borel{V}$-measurable.
Then the following statements hold true.
\begin{enumerate}[label=(\roman*)]
\item\label{item:kdv1} The operator $A$ generates a strongly continuous group of isometries $S\colon \R\to L(H)$, which restricts to a strongly continuous group of isometries $S|_V\colon\R\to L(V)$. 

\item\label{item:kdv2} There are unique mappings $F\in C^2_b(H)$ and $B \in C^2_b(H;\HS(U;H))$ which satisfy for all $u,v \in H$ and all $x \in \R^d$ that 
\begin{equation*}
F(v)(x) = f_0(x)+f_1(x)v(x), \qquad B(v)(u)(x) = \big(b_0(x) + b_1(x) v(x)\big)(\Sigma u)(x).
\end{equation*}
The mappings $F$ and $B$ restrict to $F|_V \in \Lip(V)$ and $B|_V \in \Lip(V;\HS(U;V))$.

\item\label{item:kdv3} For each $n \in \N\cup\{\infty\}$ there is an up to modifications unique predictable process $X^n\colon[0,T]\times \Omega\to H$ which satisfies that $\P{\int_0^T \norm{X^n_t}{H}^2 \d t<\infty} = 1$ and for each $t \in [0,T]$
\begin{equation*}
[X^n_t]_{\PP,\borel{H}} = \left[S_t\xi + \int_0^t S_{t-s} F(X^n_s) \d s\right]_{\PP,\borel{H}} + \int_0^t S_{t-s} B(X^n_s) P_n \d W_s.
\end{equation*}

\item\label{item:kdv4} There exists $C \in \mathbb R$ such that for each $n \in \mathbb N$ and $\phi \in C^2_b(H;\R)\setminus\{0\}$, 
\begin{equation*}
\norm{X_T^\infty- X_T^n}{L^2(\Omega;H)}^2  +
\frac{\abs{ \E{\phi\left(X^\infty_T\right)} -
\E{\phi\left(X^n_T\right)}}}{\norm{\phi}{C^2_b(H;\R)}}
\leq C \sum_{k=n}^\infty \norm{\Sigma e_k}{H}^2.
\end{equation*}
\end{enumerate}
\end{proposition}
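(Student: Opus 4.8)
The plan is to verify that Proposition~\ref{prop:kdv} is an instance of Theorem~\ref{thm:weak_errorG} with Hilbert spaces $H = U = L^2(\R)$ and $V = H^r(\R)$, semigroup $S$, and coefficients $F$, $B$ as in the statement, and then to bound the right-hand side of~\eqref{eq:7}. The four assertions are established in turn, largely in parallel to the proof of Proposition~\ref{prop:schrodinger}.

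For~\ref{item:kdv1} I would pass to the Fourier transform, under which $A = -\partial_x^3$ becomes multiplication by the purely imaginary symbol $\sqrt{-1}\,\xi^3$ with maximal domain $\{v \in L^2(\R)\colon (1+|\xi|^3)\widehat v \in L^2\} = H^3(\R)$. A multiplication operator with an imaginary-valued symbol is skew-adjoint on its maximal domain, so $A^* = -A$, and Stone's theorem (cf.~\cite[Theorem~3.24]{engel1999one}) yields a strongly continuous group of isometries $S\colon\R\to L(H)$ with $\widehat{S_t v}(\xi) = \exp(\sqrt{-1}\,t\xi^3)\,\widehat v(\xi)$; the identity $\overline{\exp(\sqrt{-1}\,t\xi^3)} = \exp(\sqrt{-1}\,t(-\xi)^3)$ ensures that $S_t$ preserves real-valued functions. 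Since the symbol is unimodular, $S_t$ commutes with the Bessel potential operator and therefore restricts to a group of isometries on $V = H^r(\R)$, with strong continuity on $V$ following by density and dominated convergence. This is precisely the dispersive, non-regularizing situation that Theorem~\ref{thm:weak_errorG} is designed to treat.

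For~\ref{item:kdv2} the structural facts I would use are that $H^r(\R)$ with $r > 1/2$ is a Banach algebra which embeds continuously into $L^\infty(\R)$, and that pointwise multiplication is bounded bilinear as a map $V\times H\to H$ and as a map $V\times V\to V$ (Lemma~\ref{lem:multiplication}.\ref{item:multiplication1} and the Sobolev embedding theorem). As $f_0,f_1,b_0,b_1 \in V$ and $\Sigma\in\HS(U;V)$, the maps $F(v) = f_0 + f_1 v$ and $B(v)u = (b_0 + b_1 v)(\Sigma u)$ are affine in $v$ with bounded linear parts, so $F'' \equiv 0$ and $B'' \equiv 0$; hence $F\in C^2_b(H)$ and $B\in C^2_b(H;\HS(U;H))$, the latter via the bound $\norm{B(v)}{\HS(U;H)}^2 = \sum_{k}\norm{(b_0 + b_1 v)(\Sigma e_k)}{L^2(\R)}^2 \leq \norm{b_0+b_1 v}{L^\infty(\R)}^2\norm{\Sigma}{\HS(U;V)}^2$ after embedding $V$ into $L^\infty(\R)$. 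The restrictions $F|_V$ and $B|_V$ are again affine with bounded linear parts, now with respect to the $V$-norms — for $B|_V$ one uses $\norm{B(v)}{\HS(U;V)}^2 \leq c\norm{b_0 + b_1 v}{V}^2\norm{\Sigma}{\HS(U;V)}^2$ and the Banach algebra property — hence Lipschitz. Part~\ref{item:kdv3} then follows from Lemma~\ref{lem:apriori}.\ref{item:apriori1}, since the setting of Section~\ref{sec:setting} holds with $\norm{S}{B([0,\infty);L(H))} = 1$.

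Finally, for~\ref{item:kdv4} all hypotheses of Theorem~\ref{thm:weak_errorG} are in place by~\ref{item:kdv1}--\ref{item:kdv3}, and it remains only to bound its right-hand side. Using $\norm{B(x)e_k}{H} = \norm{(b_0 + b_1 x)(\Sigma e_k)}{L^2(\R)} \leq \norm{b_0 + b_1 x}{L^\infty(\R)}\norm{\Sigma e_k}{L^2(\R)}$ together with $\norm{b_0 + b_1 x}{L^\infty(\R)} \leq c\norm{b_0 + b_1 x}{V} \leq c'(1 + \norm{x}{V})$, one obtains
\begin{equation*}
\sup_{x \in V} \frac{\sum_{k=n}^\infty \norm{B(x)e_k}{H}^2}{1 + \norm{x}{V}^2}
\leq (c')^2 \Bigl( \sup_{x\in V}\tfrac{(1+\norm{x}{V})^2}{1+\norm{x}{V}^2} \Bigr) \sum_{k=n}^\infty \norm{\Sigma e_k}{H}^2
\leq 2(c')^2 \sum_{k=n}^\infty \norm{\Sigma e_k}{H}^2 ,
\end{equation*}
which gives the asserted estimate with a constant $C$ independent of $n$ and $\phi$. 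I do not expect any essential difficulty: the only place that calls for some care is the operator-theoretic bookkeeping in~\ref{item:kdv1}, namely confirming skew-adjointness with the correct domain and that the group preserves the real structure; everything else is a routine combination of Theorem~\ref{thm:weak_errorG} with standard Sobolev multiplication and embedding estimates.
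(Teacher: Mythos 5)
Your proposal is correct and follows essentially the same route as the paper: part \ref{item:kdv1} via the Fourier multiplier group $\widehat{S_t v}(\xi)=\exp(\sqrt{-1}\,t\xi^3)\widehat v(\xi)$, part \ref{item:kdv2} via the Banach algebra property of $H^r(\R)$ for $r>1/2$ and the boundedness of multiplication $V\times H\to H$, part \ref{item:kdv3} via Lemma~\ref{lem:apriori}.\ref{item:apriori1}, and part \ref{item:kdv4} by feeding the bound $\norm{B(x)e_k}{H}\lesssim\norm{b_0+b_1x}{V}\norm{\Sigma e_k}{H}$ into Theorem~\ref{thm:weak_errorG}. The only cosmetic differences are that you invoke Stone's theorem and check preservation of the real structure explicitly, where the paper simply exhibits the group, and that you route the final estimate through $L^\infty$ rather than the abstract bilinear bound; both are equivalent.
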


\begin{proof}
\ref{item:kdv1}: Let $\hat{}$ denote the Fourier transform, and let $i =\sqrt{-1}$. For each $v \in H$ and $t \in [0,\infty)$ let $S_tv$ be the unique element of $H$ which satisfies for each $\xi \in \R$ that $\widehat{S_t v}(\xi) = \exp(i t \xi^3) \widehat{v}(\xi)$. 
This defines a strongly continuous group of isometries on $H$, whose generator is $A$. Moreover, $S$ restricts to a strongly continuous group of isometries on $V$. 

\ref{item:kdv2} can be seen in the same way as Proposition~\ref{prop:schrodinger}.\ref{item:schrodinger2}.

\ref{item:kdv3} and \ref{item:kdv4} can be shown similarly as in Proposition~\ref{prop:schrodinger}.
\end{proof}

\begin{remark}
Proposition~\ref{prop:kdv} remains valid if the condition $f_1,b_1 \in V$ is replaced by $f_1,b_1 \in \R$. Nonlinear Nemytskii operators $F$ can be treated as mentioned in \autoref{rmk:schroedinger} for the Schr\"odinger equation.
\end{remark}

\appendix

\section{Auxiliary results from interpolation theory}
\label{sec:aux}

We will use several results from interpolation theory to apply the abstract setting of Theorem~\ref{thm:weak_errorG} and Proposition~\ref{prop:weak_error_strong} to the concrete equations in Section~\ref{sec:examples}.  
Throughout this section, we will often write $H^\alpha:= H^\alpha ((0,1))$, $L^p:= L^p((0,1))$, $W^{\alpha,p}:= W^{\alpha,p}((0,1))$, $\alpha\in \R$, $p\in [1,\infty]$.

\subsection{Interpolation spaces of negative order}

Recall that for a (real or complex) Hilbert space $H$ and a symmetric diagonal linear operator $A\colon D(A) \subseteq H \to H$ with $\inf\sigma_P(A)>0$ there exists an up to isometric isomorphisms unique family of interpolation spaces associated to $A$ (see \cite[Theorem~3.5.24]{jentzen2015stochastic} or \cite[Section~3.7]{sell2013dynamics}). 
This is a family of Hilbert spaces $(H_r)_{r\in\R}$ which satisfies for all $v\in H$ and $r, s, t\in \R$ with $r\geq s$ and $t\geq 0$ that $H_r$ is densely contained in $H_s$, $D(A^t) = H_t$, $\norm{\cdot}{H_t} = \norm{A^t(\cdot)}{H}$, and $\norm{v}{H_{-t}} = \norm{A^{-t}v}{H}$.

The following lemma is well-known in the more elaborate setting of sectorial operators (see e.g.~\cite[Theorem~1.18]{lunardi2018interpolation}) and reads as follows in the present simpler setting of diagonal operators.

\begin{lemma}\label{lem:dual}
Let $\mathbb K \in \{\R,\C\}$, 
let $H$ be a $\mathbb K$-Hilbert space, 
let $A\colon D(A)\subseteq H \to H$ be a symmetric diagonal linear operator with $\inf \sigma_P(A)>0$, 
let $(H_r)_{r \in \R}$ be a family of interpolation spaces associated to $A$, 
and let $r \in [0,\infty)$. 
Then there is a unique isometric isomorphism $\phi\colon H_{-r}\to H_r^*$ which satisfies for all $u \in H$ and $v \in H_r$ that $\phi(u)(v) = \ip{u}{v}{H}$.
\end{lemma}

\begin{proof}
Uniqueness of $\phi$ follows from the density of $H$ in $H_{-r}$. It remains to show existence. 
Letting $\hat A^{-r}$ denote the isometric extension of $A^{-r}$ to $H_{-r}$, one has isometries $\hat A^{-r}\colon H_{-r} \to H$ and $A^r\colon H_r \to H$.
Both mappings are surjective: they have closed range because they are isometric, and dense range because their range contains the dense subset $H_r$ of $H$. 
Thus, they are isometric isomorphisms. 
Let $j\colon H\to H^*$ be the Riesz isomorphism, and let $(A^r)^*\colon H^*\to (H_r)^*$ be the Banach space adjoint of $A^r$. 
As $A^r$ is isometric and injective, $(A^r)^*$ is isometric and surjective. 
Then the mapping
$\phi = (A^r)^* \circ j \circ \hat A^{-r}\colon H_{-r}\to (H_r)^*$
is an isometric isomorphism, which satisfies for each $u \in H$ and $v \in H_r$ that
\begin{equation*}
\phi(u)(v) = \ip{\hat A^{-r} u}{A^r v}{H} = \ip{A^{-r} u}{A^r v}{H} = \ip{u}{v}{H}. 
\qedhere
\end{equation*}
\end{proof}

\subsection{Interpolation spaces associated to the Dirichlet Laplacian}

The interpolation spaces of the Dirichlet Laplacian on the unit interval coincide with certain Sobolev spaces. 
This is described in the following lemma, which summarizes several well-known results in this regard.

\begin{lemma}\label{lem:interpolation}
Let $H$ be the $\R$-Hilbert space $L^2((0,1))$, let $\theta\in (0,\infty)$, 
let $\dom(\Delta):= H^2((0,1)) \cap H^1_0((0,1))$ and $\Delta\colon \dom(\Delta) \subset H \to H$ be the
Laplace operator with Dirichlet boundary conditions,
and let $(H_r)_{r \in \R}$ be a family of interpolation spaces associated to $-\theta\Delta$. 
Then the following statements hold true:
\begin{enumerate}[label=(\roman*)]
\item\label{item:interpolation1} 
For each $r \in [0, 3/4) \setminus \{1/4\}$ the spaces $H_r$ and $H_0^{2r}((0,1))$ are equal and carry equivalent norms. 

\item\label{item:interpolation2}
For each $r\in (1/4,\infty)$ the inclusion of $H$ into $H_{-r}$ extends to a unique continuous embedding $\phi\colon L^1((0,1)) \to H_{-r}$.
\end{enumerate}
\end{lemma}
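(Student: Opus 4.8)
The plan is to prove the two statements of Lemma~\ref{lem:interpolation} by combining standard facts about complex/real interpolation of Sobolev spaces with Lemma~\ref{lem:dual}. For part~\ref{item:interpolation1}, I would first recall that the interpolation spaces $H_r$ associated to $-\theta\Delta$ are, by definition, the domains $D((-\theta\Delta)^r)$ equipped with the graph norm (equivalently $\|(-\theta\Delta)^r\cdot\|_H$, since $\inf\sigma_P(-\theta\Delta)>0$); the factor $\theta$ only rescales the operator and hence does not affect the spaces or the equivalence class of their norms, so I may take $\theta=1$. The key input is the well-known identification of the fractional power domains of the Dirichlet Laplacian: for $2r\in[0,1/2)$ one has $D((-\Delta)^r)=H^{2r}((0,1))=H^{2r}_0((0,1))$ with equivalent norms, while for $2r\in(1/2,3/2)\setminus\{1/2\}$ (in particular for $2r\in(1/2,3/2)$ avoiding the endpoint $2r=1/2$ which is excluded by the hypothesis $r\neq 1/4$) one has $D((-\Delta)^r)=\{u\in H^{2r}((0,1)): u|_{\partial(0,1)}=0\}=H^{2r}_0((0,1))$ with equivalent norms. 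This is classical; I would cite it (e.g.\ from the references used elsewhere in the paper on interpolation, or a standard reference such as Lunardi or Triebel) rather than reprove it. The only subtlety is the interval $r\in[0,3/4)$ with the single exclusion $r=1/4$: at $2r=1/2$ the space $H^{1/2}_0$ does \emph{not} coincide with $D((-\Delta)^{1/4})$ (the Lions--Magenes phenomenon), which is precisely why $1/4$ is excluded; away from that point the identification holds on the whole stated range.

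For part~\ref{item:interpolation2}, the idea is to dualize part~\ref{item:interpolation1}. Fix $r\in(1/4,\infty)$. The inclusion $H\hookrightarrow H_{-r}$ is continuous by construction of the interpolation scale, and $H$ is dense in $H_{-r}$, so any continuous extension to $L^1((0,1))\supseteq H$ is unique provided $H$ is dense in $L^1$ — which it is, since smooth compactly supported functions are dense in both. For existence, I would apply Lemma~\ref{lem:dual} with $A=-\theta\Delta$ (which is symmetric, diagonal with respect to the sine basis, and has $\inf\sigma_P(A)=\theta\pi^2>0$): it gives an isometric isomorphism $\phi\colon H_{-r}\to H_r^*$ acting on $u\in H$ by $\phi(u)(v)=\int_0^1 u v$. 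So it suffices to show that the bilinear pairing $(u,v)\mapsto\int_0^1 uv$ extends to a continuous map $L^1((0,1))\times H_r\to\R$, i.e.\ that $H_r$ embeds continuously into $L^\infty((0,1))=C([0,1])$-type bound, more precisely that $\sup_{s}|v(s)|\lesssim\|v\|_{H_r}$. When $r>1/4$ we have $2r>1/2$, and by part~\ref{item:interpolation1} (if $r<3/4$) or by monotonicity of the scale and $H_{r}\hookrightarrow H_{r'}$ for $r\le r'$ (to reduce to, say, $r$ slightly above $1/4$), $H_r\hookrightarrow H^{2r}((0,1))\hookrightarrow C([0,1])$ since $H^{s}((0,1))\hookrightarrow C([0,1])$ for $s>1/2$ by Sobolev embedding in one dimension. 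Hence $|\int_0^1 uv|\le\|u\|_{L^1}\|v\|_{L^\infty}\le C\|u\|_{L^1}\|v\|_{H_r}$, so $u\mapsto(v\mapsto\int_0^1 uv)$ is a bounded linear map $L^1\to H_r^*$, and composing with $\phi^{-1}$ yields the desired continuous embedding $L^1((0,1))\to H_{-r}$ extending the inclusion of $H$.

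I would organize the write-up as: (1) reduce to $\theta=1$; (2) quote the fractional-domain identification and read off part~\ref{item:interpolation1}, being explicit about the exclusion of $r=1/4$; (3) for part~\ref{item:interpolation2}, establish uniqueness via density, then establish the Sobolev embedding $H_r\hookrightarrow C([0,1])$ for $r>1/4$ (handling $r\ge 3/4$ by the inclusion $H_r\hookrightarrow H_{r_0}$ for any $r_0\in(1/4,3/4)$), and finally invoke Lemma~\ref{lem:dual} to transport the obvious bound $|\int uv|\le\|u\|_{L^1}\|v\|_{L^\infty}$ into a bound valued in $H_{-r}$. The main obstacle is part~\ref{item:interpolation1}: getting the identification of fractional powers of the Dirichlet Laplacian with $H^{2r}_0$ on the correct range with the correct single exclusion is the delicate point, and one must be careful that ``$H^\alpha_0$'' in the paper's notation (closure of $C_c^\infty$ in $H^\alpha$) matches the space arising in the interpolation/Lions--Magenes literature, which for $\alpha\in(0,1/2)$ coincides with $H^\alpha$ and for $\alpha\in(1/2,3/2)$ coincides with the zero-trace subspace, but differs from $D((-\Delta)^{1/4})$ exactly at $\alpha=1/2$. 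Everything in part~\ref{item:interpolation2} is then routine once part~\ref{item:interpolation1} and Lemma~\ref{lem:dual} are in hand.
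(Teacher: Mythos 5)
Your proposal is correct and follows essentially the same route as the paper: part~\ref{item:interpolation1} is obtained by combining the classical identification of the fractional power domains of the Dirichlet Laplacian with the zero-trace Sobolev spaces (the paper cites Triebel and Grisvard for this) with the Lions--Magenes characterization of $H^{2r}_0$, with the same exclusion of $r=1/4$; part~\ref{item:interpolation2} is obtained exactly as you describe, by reducing to $r\in(1/4,3/4)$, using density of $H$ in $L^1$ for uniqueness, and using Lemma~\ref{lem:dual} together with the Sobolev embedding $H_r\hookrightarrow L^\infty$ for existence. No gaps.
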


\begin{proof}
The operator $-\theta\Delta$ is symmetric diagonal linear with $\inf\sigma_P(-\theta\Delta)>0$, which implies that the interpolation spaces associated to $-\theta\Delta$ are well-defined. 

\ref{item:interpolation1}: For each $r \in [0,1/4)$ let $H_D^{2r} = H^{2r}$, and for each $r \in (1/4,1)$ let $H_D^{2r}  = \{u \in H^{2r} : u(0)=u(1) = 0\}$. Then the spaces $H_r$ and $H^{2r}_D$ are equal and carry equivalent norms by \cite[Theorem~1.18.10]{triebel1978interpolation} and \cite[Theorem~8.1]{grisvard1967caracterisation}. Moreover, by \cite[Theorem~11.4]{lions1972nonhomogeneous1} it holds for each $r\in [0,1/4)$ that $H_0^{2r} = H^{2r}$, and it holds for each $r \in (1/4,3/4)$ that $H_0^{2r}  = \{u \in H^{2r} \colon u(0)=u(1) = 0\}$. Hence, it holds for each $r\in [0,3/4] \setminus \{1/4\}$ that the spaces $H_r$ and $H^{2r}_0$ are equal and carry equivalent norms. This proves \ref{item:interpolation1}.

\ref{item:interpolation2}: It is sufficient to show the statement for each $r \in (1/4,3/4)$. Uniqueness of $\phi$ follows from the density of $H$ in $L^1$, and existence follows from the following estimate: by Lemma~\ref{lem:dual} one has for each $u \in H$ that 
\begin{equation*}
\norm{u}{H_{-r}} 
= 
\sup_{0\neq v \in H_r} \frac{\abs{\ip{u}{v}{H}}}{\norm{v}{H_r}}
\leq
\norm{u}{L^1}\sup_{0\neq v \in H_r}\frac{\norm{v}{L^\infty}}{\norm{v}{H_r}}, 
\end{equation*}
and the right-hand side is finite by \ref{item:interpolation1} and the Sobolev embedding theorem.
\end{proof}

\subsection{Multiplication operators on Sobolev--Slobodeckij spaces}

The following lemma summarizes some well-known conditions for the continuity of pointwise multiplication of functions with Sobolev--Slobodeckij regularity. 
Point~\ref{item:multiplication2} is specialized to the setting  in~\autoref{prop:wave} where the supremum norms of the multiplier $f$ and its derivative $f'$ can be calculated explicitly. 

\begin{lemma}\label{lem:multiplication}
The following statements hold:
\begin{enumerate}[label=(\roman*)]
\item\label{item:multiplication1}
Let $\alpha,\beta \in [0,\infty)$ and $p,q \in [1,\infty)$ satisfy $\beta\geq \alpha$, $\beta-\alpha\geq 1/q-1/p$, $\beta>1/\min\{p,q\}$. 
Then 
\begin{align*}
\sup_{\substack{0\neq f\in W^{\beta,q}((0,1))\\0\neq g\in W^{\alpha,p}((0,1))}} 
\frac{\norm{fg}{W^{\alpha,p}((0,1))}}{\norm{f}{W^{\beta,p}((0,1))}\norm{g}{W^{\alpha,q}((0,1))}}<\infty.
\end{align*}

\item\label{item:multiplication2}
Let $p\in [1,\infty]$, $\alpha \in [0,1]$. Then
\begin{equation*}
\sup_{\substack{0\neq f\in W^{1,\infty}((0,1))\\0\neq g\in W^{\alpha,p}((0,1))}} \frac{\norm{fg}{W^{\alpha,p}((0,1))}}{\norm{f}{L^\infty((0,1))}^{1-\alpha} \norm{f}{W^{1,\infty}((0,1))}^\alpha\norm{g}{W^{\alpha,p}((0,1))}}<\infty.
\end{equation*}  
\end{enumerate} 
\end{lemma}

\begin{proof}
\ref{item:multiplication1}: 
This is a special case of \cite[Theorem~7.5]{behzadan2015multiplication}.

\ref{item:multiplication2}: 
For each $f \in W^{1,\infty}$ the multiplication operator $M_f\colon g \mapsto fg$ is continuous on the spaces $L^p$ and $W^{1,p}$ and satisfies $\norm{M_f}{L(L^p)} \leq \norm{f}{L^\infty}$ and $\norm{M_f}{L(W^{1,p})} \leq 2\norm{f}{W^{1,\infty}}$ because it holds true for each $g \in L^p$ and $h \in W^{1,p}$ that $\norm{fg}{L^p}\leq \norm{f}{L^\infty} \norm{g}{L^p}$ and
\begin{align*}
  \norm{fh}{W^{1,p}} 
  &\leq \norm{fh}{L^p} +
    \norm{(fh)'}{L^p} 
  \leq 
      \norm{f}{L^\infty} \norm{h}{L^p} + \norm{f'}{L^\infty}\norm{h}{L^p} + \norm{f}{L^\infty} \norm{h'}{L^p} 
  \\&\leq 
      2\norm{f}{W^{1,\infty}}\norm{h}{W^{1,p}}.
\end{align*}
Therefore, $M_f$ acts continuously on the real interpolation space $(L^p, W^{1,p})_{\alpha,p}$, which 
equals $W^{\alpha,p}$ by \cite[Example 1.26]{lunardi2018interpolation} and satisfies
$\norm{M_f}{L(W^{\alpha,p})} \leq 2^\alpha \norm{f}{W^{1,\infty}}^\alpha \norm{f}{L^\infty}^{1-\alpha}$.
\end{proof}

\subsection{Nemytskii operators on Bessel potential spaces}

The following lemma gives sufficient conditions for twice continuous differentiability of certain Nemytskii operators on function spaces below the Sobolev threshold. 
Similar results in slightly different settings can be found in \cite{appell1990nonlinear, runst2011sobolev}.

\begin{lemma}\label{lem:nemytskiismooth}
Let $f \in C^{0,2}_b([0,1]\times\R)$, let $\alpha \in (0,1/2)$, and for all $u \in H^\alpha((0,1))$ and $x \in (0,1)$ let $F(u)(x) := f(x,u(x))$.
Then $F \in C^2_b(H^\alpha((0,1));L^1((0,1)))$.
\end{lemma}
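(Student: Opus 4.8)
The plan is to show that $F$ is twice Fréchet differentiable as a map $H^\alpha((0,1)) \to L^1((0,1))$ with bounded derivatives, by computing the derivatives explicitly as Nemytskii operators built from the partial derivatives $\partial_u f$ and $\partial_{uu} f$, and verifying the required continuity and boundedness using two facts: the Sobolev embedding $H^\alpha((0,1)) \hookrightarrow L^{p(\alpha)}$ (valid since $\alpha \in (0,1/2)$, giving finite exponents), and the boundedness of $f$, $\partial_u f$, $\partial_{uu} f$.

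First I would record the candidate derivatives. For $u, v, w \in H^\alpha$ set
\begin{equation*}
(F'(u)v)(x) = \partial_u f(x,u(x))\, v(x), \qquad (F''(u)(v,w))(x) = \partial_{uu} f(x,u(x))\, v(x)\, w(x).
\end{equation*}
These land in $L^1$: since $\partial_u f$ is bounded, $\|F'(u)v\|_{L^1} \leq \|\partial_u f\|_\infty \|v\|_{L^1} \lesssim \|v\|_{H^\alpha}$; and by Cauchy--Schwarz $\|F''(u)(v,w)\|_{L^1} \leq \|\partial_{uu} f\|_\infty \|v\|_{L^2}\|w\|_{L^2} \lesssim \|v\|_{H^\alpha}\|w\|_{H^\alpha}$, using $H^\alpha \hookrightarrow L^2$. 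So $F'(u) \in L(H^\alpha; L^1)$ and $F''(u) \in L^{(2)}(H^\alpha; L^1)$ with norms bounded uniformly in $u$.

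Next I would establish that these are genuinely the Fréchet derivatives and that $u \mapsto F(u)$, $u \mapsto F'(u)$, $u \mapsto F''(u)$ are continuous. The standard device is the integral remainder: for the first derivative,
\begin{equation*}
F(u+v) - F(u) - F'(u)v = \left( \int_0^1 \big[\partial_u f(\cdot, u + tv) - \partial_u f(\cdot, u)\big]\d t \right) v,
\end{equation*}
and one estimates the $L^1$ norm by $\|v\|_{L^2}$ times the $L^2$ norm of the bracketed average, which tends to $0$ as $\|v\|_{H^\alpha}\to 0$ by dominated convergence (using continuity of $\partial_u f$ in the second variable, boundedness of $\partial_u f$, and the fact that $H^\alpha$-convergence gives $L^2$-convergence and hence, along subsequences, pointwise a.e.\ convergence). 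Continuity of $u \mapsto F'(u) \in L(H^\alpha;L^1)$ follows from the same kind of argument: $\|(F'(u)-F'(\tilde u))v\|_{L^1} \leq \|\partial_u f(\cdot,u) - \partial_u f(\cdot,\tilde u)\|_{L^2}\|v\|_{L^2}$, and the first factor goes to $0$ as $\tilde u \to u$ in $H^\alpha$ by dominated convergence. The second derivative and its continuity are handled identically, now with a three-factor Hölder estimate (two $L^4$'s and one $L^\infty$, or $L^2 \times L^\infty \times L^\infty$ according to convenience) — note $H^\alpha \hookrightarrow L^4$ still holds for $\alpha \in (0,1/2)$ on a bounded interval, but in fact $L^2 \times L^2 \times L^\infty$ suffices throughout since $\partial_{uu} f$ and its difference quotients are bounded.

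The main obstacle is the subsequence-and-dominated-convergence bookkeeping needed to pass from $H^\alpha$-convergence to convergence of the nonlinear composition terms in $L^2$ (or $L^1$): one must argue along subsequences to extract a.e.\ convergence, combine with the uniform boundedness of $f$ and its derivatives to invoke dominated convergence, and then use the subsequence criterion to conclude the full limit. There is also a minor point in checking $F(u) \in L^1$ itself, which is immediate from $|f(x,u(x))| \leq \|f\|_\infty$ on the bounded interval $(0,1)$. Everything else is routine; no smoothing or interpolation beyond the Sobolev embedding is required, and the hypothesis $\alpha < 1/2$ is used only to guarantee $H^\alpha \hookrightarrow L^q$ for the finite exponents $q$ appearing in the Hölder estimates (for $\alpha \geq 1/2$ one would still have $H^\alpha \hookrightarrow L^q$ for all finite $q$, so the restriction is not actually essential here, but we keep it as stated).
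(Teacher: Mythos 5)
Your overall strategy coincides with the paper's: the same explicit formulas for $F'$ and $F''$, integral-remainder estimates for the two differentiability claims, and a subsequence-plus-dominated-convergence argument for continuity. There is, however, a genuine gap in the H\"older-exponent bookkeeping for all the second-derivative terms (continuity of $F''$ and the second-order remainder), and this is the only nontrivial quantitative step in the proof. Both splittings you offer fail. First, $H^\alpha((0,1))\hookrightarrow L^4((0,1))$ is \emph{false} for $\alpha\in(0,1/4)$: the Sobolev embedding on a bounded interval only gives $H^\alpha\hookrightarrow L^{2/(1-2\alpha)}$, and $2/(1-2\alpha)<4$ precisely when $\alpha<1/4$, so the ``two $L^4$'s and one $L^\infty$'' option does not cover the stated range of $\alpha$. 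Second, the fallback $L^2\times L^2\times L^\infty$ does not close the argument: for continuity of $F''$ it would require $\bigl\lVert f^{(0,2)}(\cdot,u_n(\cdot))-f^{(0,2)}(\cdot,u(\cdot))\bigr\rVert_{L^\infty}\to 0$, but boundedness of $f^{(0,2)}$ together with a.e.\ convergence along subsequences yields convergence to zero only in $L^p$ for \emph{finite} $p$, never in $L^\infty$ (and $u_n\to u$ in $H^\alpha$ with $\alpha<1/2$ gives no uniform convergence to feed into a modulus of continuity). The difference factor must therefore sit in some $L^p$ with $p<\infty$, which forces $v,w$ into $L^q$ with $1/p+2/q=1$ and $q<\infty$; compatibility with $H^\alpha\hookrightarrow L^q$ then forces $q\le 2/(1-2\alpha)$, i.e.\ $p\ge 1/(2\alpha)$. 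This is exactly the paper's choice $p=1/(2\alpha)$, $q=2/(1-2\alpha)$, for which dominated convergence applies to the $L^p$ norm of the difference (dominated by the constant $2\lVert f^{(0,2)}\rVert_{L^\infty}$ on the bounded interval). With these exponents your argument goes through; with the ones you wrote down it does not.

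A second, minor point: under the paper's convention a function in $C^{0,2}_b$ need not itself be bounded (only its derivatives of orders $1$ and $2$ in the second variable are), so the step ``$F(u)\in L^1$ is immediate from $\lvert f(x,u(x))\rvert\le\lVert f\rVert_\infty$'' is not available as stated. It is still immediate, but via $\lvert f(x,y)\rvert\le\lvert f(x,0)\rvert+\lVert f^{(0,1)}\rVert_{L^\infty}\lvert y\rvert$ together with $u\in L^1((0,1))$ and the continuity of $f(\cdot,0)$ on the compact interval. The remaining steps of your proposal (boundedness of $F'$ and $F''$, the first-order remainder via the mean value theorem or dominated convergence, and the continuity of $F'$ with the difference placed in $L^2$) are correct and match the paper.
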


\begin{proof}
Let $p=1/(2\alpha)$ and $q = 2/(1-2\alpha)$. We will use repeatedly that $H^\alpha$ embeds continuously in $L^q$. For each $u,v,w \in H^\alpha$ and $x \in (0,1)$ let 
\begin{align*}
(F'(u)v)(x):=f^{(0,1)}(x,u(x))v(x), 
\qquad
(F''(u)(v,w))(x):=f^{(0,2)}(x,u(x))v(x)w(x).
\end{align*}
We will show below that $F'$ and $F''$ are indeed the derivatives of $F$. For each $u \in H^\alpha$ one has $F(u) \in L^1$ because
\begin{equation*}
\norm{F(u)}{L^1} \leq \mednorm{f^{(0,1)}}{L^\infty((0,1)\times\R)} \norm{u}{L^1} + \norm{f(\cdot,0)}{L^1}<\infty.
\end{equation*}	
For each $u,v \in H^\alpha$ one has $F'(u)v \in L^1$ because
\begin{align*}
\norm{F'(u)v}{L^1} \leq \mednorm{f^{(0,1)}}{L^\infty((0,1)\times\R)} \norm{v}{L^1} <\infty.
\end{align*}
This also shows that $F'\colon H^\alpha\to L(H^\alpha;L^1)$ is bounded. For each $u,v,w \in H^\alpha$ one has $F''(u)(v,w) \in L^1$ because
\begin{align*}
\norm{F''(u)(v,w)}{L^1} \leq \mednorm{f^{(0,2)}}{L^\infty((0,1)\times\R)} \norm{v}{L^2}\norm{w}{L^2} <\infty.
\end{align*}
This also shows that $F''\colon H^\alpha\to
L^{(2)}(H^\alpha;L^1)$ is bounded. Moreover, $F''$ is
continuous. To see this, let $(u_n)_{n\in\N}$ be a sequence which
converges to $u$ in $H^\alpha$. For any sequence
$(n_k)_{k\in\N}$ there exists a subsequence $(n_{k_l})_{l\in\N}$ such
that $u_{n_{k_l}}$ converges to $u$ almost everywhere. By H\"older
inequality, the continuity of $f^{(0,2)}$ and the dominated convergence theorem,
\begin{multline*}
\limsup_{l\to\infty} \mednorm{(F''(u_{n_{k_l}})-F''(u))(v,w)}{L^1} 
\\\leq 
\limsup_{l\to\infty} \mednorm{f^{(0,2)}(\cdot,u_{n_{k_l}}(\cdot))-f^{(0,2)}(\cdot,u(\cdot))}{L^p} \norm{v}{L^q} \norm{w}{L^q}
=0.
\end{multline*}
This implies the continuity of $F''$. The function $F'$ is the
Fr\'echet derivative of $F$. This follows from the following estimate
for $u,v \in H^\alpha$, $v\neq 0$, 
\begin{align*}
\hspace{2em}&\hspace{-2em}
\norm{v}{H^\alpha}^{-1} \norm{F(u+v)-F(u)-F'(u)v}{L^1}
\\&=
\norm{v}{H^\alpha}^{-1} \int_0^1 \abs{\int_0^1 \left(f^{(0,1)}(x,u(x)+tv(x))-f^{(0,1)}(x,u(x))\right) \d t\, v(x)}\d x
\\&\leq
\norm{v}{H^\alpha}^{-1} \mednorm{f^{(0,2)}}{L^\infty((0,1)\times\R)} \norm{v}{L^2}^2,
\end{align*}
noting that for each $u \in H^\alpha$ the right-hand side converges to zero as $v\to 0$ in $H^\alpha$. The function $F''$ is the Fr\'echet derivative of $F'$. This follows from the following estimate for $u,v,w \in H^\alpha$, 
\begin{align*}
\hspace{2em}&\hspace{-2em}
              \norm{F'(u+w)v-F'(u)v-F''(u)(v,w)}{L^1}
  \\&=
      \int_0^1 \abs{\int_0^1 \left(f^{(0,2)}(x,u(x)+tw(x))-f^{(0,2)}(x,u(x))\right) \d t\, v(x) w(x)} \d x
  \\&\leq
      \int_0^1\mednorm{f^{(0,2)}(\cdot,u(\cdot)+tw(\cdot))-f^{(0,2)}(\cdot,u(\cdot))}{L^p} \d t \norm{v}{L^q}\norm{w}{L^q}.
\end{align*}
By the above subsequence argument and dominated convergence theorem
the integral term converges to $0$, as $w\to 0$ in
$H^\alpha$. Using this and that $H^\alpha$ is
continuously embedded in $L^q$ we finally get
\begin{equation*}
\lim_{\substack{w\in H^\alpha\setminus\{0\}\\ w\to 0}}
\frac{\norm{F'(u+w)-F'(u)-F''(u)(\cdot,w)}{L(H^\alpha; L^1)}}{\norm{w}{H^\alpha}}
=  0.  
\qedhere
\end{equation*}
\end{proof}

\printbibliography

\end{document}